\documentclass[11pt,openany,leqno, english]{article}
\usepackage{yhmath, amsmath,amsthm,amsfonts,amssymb,amscd,url}
\usepackage{graphics}
\usepackage[latin1]{inputenc}
\usepackage{enumerate}
\setcounter{tocdepth}{1}

\begin{document}
\def\RA{{\bf\text{To be read again}}}
\def\ERA{{\bf\text{End To be read again}}}
\def\Diff{\text{Diff}}
\def\Max{\text{max}}
\def\Log{\text{log}}
\def\loc{\text{loc}}
\def\inta{\text{int }}
\def\det{\text{det}}
\def\exp{\text{exp}}
\def\Re{\text{Re}}
\def\lip{\text{Lip}}
\def\leb{\text{Leb}}
\def\dom{\text{Dom}}
\def\diam{\text{diam}\:}
\newcommand{\ovfork}{{\overline{\pitchfork}}}
\newcommand{\ovforki}{{\overline{\pitchfork}_{I}}}
\newcommand{\Tfork}{{\cap\!\!\!\!^\mathrm{T}}}
\newcommand{\whforki}{{\widehat{\pitchfork}_{I}}}
\theoremstyle{plain}
\newtheorem{theo}{\bf Theorem}[section]
\newtheorem{lemm}[theo]{\bf Lemma}
\newtheorem{prop}[theo]{\bf Proposition}
\newtheorem{coro}[theo]{\bf Corollary}
\newtheorem{Property}[theo]{\bf Property}
\newtheorem{conj}[theo]{\bf Conjecture}

\theoremstyle{remark}
\newtheorem{rema}[theo]{\bf Remark}
\newtheorem{exem}[theo]{\bf Example}
\newtheorem{Examples}[theo]{\bf Examples}
\newtheorem{defi}[theo]{\bf Definition}
\newtheorem{ques}[theo]{\bf Question}
\newtheorem{claim}[theo]{\bf Claim}
\newtheorem{fact}[theo]{\bf Fact}
\newtheorem{sublemma}[theo]{\bf Sublemma}

\title{Persistent bundles over a 2 dimensional compact set}
\date{}
\author{Pierre Berger\\
 CNRS-LAGA Université Paris 13\\
berger@math.univ-paris13.fr}

\maketitle

\begin{abstract} 
The $C^1$-structurally stable diffeomorphims of a compact manifold are 
those that satisfy Axiom $A$ and the strong transversality condition ($AS$). We generalize the concept of $AS$ from diffeomorphisms to invariant compact subsets. Among other properties, we show the structural stability of the $AS$ invariant compact sets $K$ of surface diffeomorphisms $f$. Moreover if $\hat f$ is the dynamics of a compact manifold, which fibers over $f$ and such that the bundle is normally hyperbolic over the nonwandering set of $f{|K}$, then the bundle over $K$ is persistent. This provides non trivial examples of persistent laminations that are not normally hyperbolic.
\end{abstract}


\section*{}

 A classical result states that
hyperbolic compact sets are $C^1$-structurally stable.
A compact subset $K$ of a manifold $M$ is \emph{hyperbolic} for a diffeomorphism $f$ of $M$ if it is invariant ($f(K)=K$) and the tangent bundle of $M$
restricted to $K$ splits into one or two $Tf$-invariant subbundles
 contracted or expanded.
An invariant subset $K$ of a diffeomorphism $f$ is \emph{$C^1$-structurally stable} if every
$C^1$-perturbation $f'$ of $f$ leaves invariant a compact set $K'$
homeomorphic to $K$ by an embedding $C^0$-close to the inclusion $K
\hookrightarrow M$ which conjugates the dynamics $f{|K}$ and $f'{|K'}$.

Such a result was generalized toward two directions that we would love to unify.

The first was to describe the \emph{$C^1$-structurally stable
diffeomorphisms} of compact manifolds ($K$ is then the whole
manifold). This description used the so-called concept of \emph{Axiom $A$ diffeomorphisms}: the diffeomorphisms for which the nonwandering set is hyperbolic and equal to the closure of the set of periodic points.  A diffeomorphism satisfies \emph{Axiom $A$ and the strong transversality condition} ($AS$) if moreover the stable and unstable manifolds of two nonwandering points intersect each other transversally. 

The works of Smale \cite{Sm}, Palis  \cite{PS}, de Melo  \cite{dM},
 Robbin  \cite{Ri}, Robinson  \cite{Rs} and Mañe \cite{Mane} 
achieved a satisfactory description of the $C^1$-structurally stable
diffeomorphisms stated in the following Theorem.

\begin{theo}[Structural Stability] The $C^1$-structurally stable diffeomorphisms of compact manifolds are exactly the $AS$ diffeomorphisms.\end{theo}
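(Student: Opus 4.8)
The statement is the conjunction of two implications of entirely different nature, which I would treat separately. That an $AS$ diffeomorphism is $C^1$-structurally stable is the constructive half, due to Robbin \cite{Ri} in the $C^2$ category and then to de Melo \cite{dM} on surfaces and Robinson \cite{Rs} in general in the $C^1$ category; it proceeds by producing the conjugacy by hand. The reverse implication is the $C^1$ Stability Conjecture, whose resolution is the deep theorem of Mañé \cite{Mane}, the necessity of Axiom $A$ being the serious point and the subsequent necessity of strong transversality belonging to the classical Smale--Palis circle of ideas \cite{Sm,PS}.

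For the constructive direction, assume $f$ is $AS$. The first step is Smale's spectral decomposition $\Omega(f)=\Lambda_1\sqcup\cdots\sqcup\Lambda_N$ into finitely many basic sets (compact, invariant, hyperbolic, transitive, locally maximal). Since each $\Lambda_i$ is a hyperbolic compact set, the classical structural stability of hyperbolic sets quoted at the beginning gives, for every $C^1$-small perturbation $f'$, a continuation $\Lambda_i'$ with a conjugacy $h_i\colon\Lambda_i\to\Lambda_i'$ that is $C^0$-close to the inclusion and depends continuously on $f'$, and by the standard theory of hyperbolic sets the local stable and unstable laminations of $\Lambda_i$ persist and vary continuously. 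The second step is to glue the $h_i$ into a single homeomorphism of $M$ conjugating $f$ and $f'$. This is where strong transversality is essential: together with Axiom $A$ it provides a filtration of $M$ adapted to the basic sets — equivalently, $M=\bigcup_i W^s(\Lambda_i)=\bigcup_i W^u(\Lambda_i)$ and the heteroclinic relation among the $\Lambda_i$ has no cycles, with $W^u(\Lambda_j)\,\pitchfork\,W^s(\Lambda_i)$ wherever they meet. One then extends the conjugacy off $\Omega(f)$ by induction along the filtration: each $x\in M$ lies on a transverse heteroclinic intersection joining $\alpha(x)\subset\Lambda_j$ to $\omega(x)\subset\Lambda_i$, and the inclination ($\lambda$-)lemma transports $h_i$ and $h_j$ along the persisting invariant manifolds to a definition of $h$ near $x$, while strong transversality is precisely what forces these local definitions to agree on overlaps. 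A conceptual way to package the gluing, following Robbin and Robinson, is to prove that $AS$ implies \emph{infinitesimal stability}, i.e.\ surjectivity of the operator $v\mapsto v-Tf^{-1}\!\cdot(v\circ f)$ from continuous vector fields on $M$ to continuous sections of $f^*TM$, and then to invoke an abstract stability theorem — a substitute for the implicit function theorem compensating for the fact that the relevant function spaces are not well-behaved Banach spaces, which is exactly the difficulty Robinson overcame to pass from $C^2$ to $C^1$.

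For the reverse direction, assume $f$ is $C^1$-structurally stable. First, every periodic orbit of $f$ must be hyperbolic, since otherwise Franks' lemma produces an arbitrarily $C^1$-small perturbation changing the number or the index of the periodic orbits near it, destroying any conjugacy. The crux, due to Mañé, is to upgrade this to \emph{uniform} hyperbolicity of $\mathrm{Per}(f)$: if the contraction and expansion rates along periodic orbits were not uniformly bounded away from $1$, then using Franks' lemma to realize a small rotation (or homothety) of the derivative cocycle along a long periodic orbit one would produce, for a $C^1$-nearby map, a periodic orbit that is not hyperbolic or whose index has changed, a contradiction. A linear cocycle analysis then shows that $\overline{\mathrm{Per}(f)}$ carries a dominated splitting; invoking Mañé's ergodic closing lemma to detect by periodic orbits any failure of uniform contraction or expansion on the subbundles, this dominated splitting must in fact be hyperbolic, and one concludes $\Omega(f)=\overline{\mathrm{Per}(f)}$ is a hyperbolic set, so $f$ satisfies Axiom $A$. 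Finally strong transversality is forced: a non-transverse intersection of $W^s(\Omega)$ with $W^u(\Omega)$ can be $C^1$-perturbed so as to create or destroy transverse heteroclinic points, or to produce a cycle among basic sets, in each case altering the orbit structure and contradicting structural stability.

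The genuine difficulty lies entirely in the reverse implication, and within it in Mañé's passage from $C^1$-structural stability to the \emph{uniform} hyperbolicity of the periodic orbits and then to the hyperbolicity of the dominated splitting over their closure. This forces one to control precisely which perturbations of the derivative cocycle along an orbit are realizable by genuine $C^1$-perturbations of the diffeomorphism (Franks' lemma and its refinements) and to exploit the tension between the absence of a dominated or hyperbolic splitting and the possibility of rotating the cocycle to create eigenvalues of modulus one. It is also the step that is sensitive to $\dim M$: the analysis of $2\times 2$ cocycles being essentially elementary, the surface case was settled first (de Melo, Mañé), and it is exactly this two-dimensional incarnation of the argument that the present paper will build on.
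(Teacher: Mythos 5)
Your outline is accurate and matches the paper exactly in spirit: the paper states this theorem as a classical result with no proof of its own, attributing it to precisely the references (Smale, Palis, de Melo, Robbin, Robinson, Ma\~n\'e) whose arguments you summarize, with the forward direction via spectral decomposition, filtrations and infinitesimal stability, and the converse via Franks' lemma and Ma\~n\'e's resolution of the stability conjecture. The only ingredient worth adding to your sketch of the converse is Pugh's $C^1$ closing lemma, which is what yields $\Omega(f)=\overline{\mathrm{Per}(f)}$ once hyperbolicity of the closure of the periodic set is established.
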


The concept of structural stability is fundamental since if one understands the global behavior of a structurally stable diffeomorphism, then he understands the topological behavior of every perturbation of it.
However such diffeomorphisms are not dense, and so this leads us to generalize this notion in order to include more diffeomorphisms.
A first generalization is the \emph{$\Omega$-stability}: every perturbation of the dynamics has a homeomorphic nonwandering set and, via this homeomorphism the dynamics on the nonwandering set is conjugated. However $\Omega$-stability does not imply that the interactions between the transitivity classes ({\it i.e.} basic set) are persistent and moreover is not generic in the $C^2$ topology (Newhouse phenomena). Also the $\Omega$-stable diffeomorphisms are Axiom $A$. Consequently every conservative $\Omega$-stable diffeomorphism is Anosov (the whole manifold is hyperbolic). This reduces a lot the variety of the examples.   

However Smale program was actually to understand most of the dynamics by splitting the phase space into several well understood invariant subsets. He simplified furthermore the study to surface diffeomorphisms and got the concept of Axiom A diffeomorphisms. I believe the following definition is a good candidate for the ``structurally stable subsets'' of the dynamics, since there are also non trivial examples in conservative dynamics.  

\begin{defi}
An invariant, compact subset $K$ of a $C^1$ diffeomorphism is $AS$ if:
 \begin{itemize}
\item[$(i)$]
The nonwandering set $\Lambda$ of the restriction $f|K$ is hyperbolic and equal to the closure of the periodic points in $K$:
\[\Lambda:= \Omega(f|K)= \overline{Per(f|K)},\]

\item[$(ii)$] There exists $\epsilon>0$ such that for all points $(x,y)\in K^2$, 
the $\epsilon$-local stable manifold $W^s_\epsilon(x)$ and the $\epsilon$-local unstable manifold $W^u_\epsilon(y)$ are transverse, and the (possibly empty) intersection  is included in $K$.
\end{itemize}
\end{defi}
We recall that the $\epsilon$-local stable manifold of $x$ is \[W^s_\epsilon(x):= \{y:\; d(f^n(x),f^n(y))<\epsilon, \forall n\ge 0\}.\]

\begin{exem}\label{premier expl}
Let $f$ be a diffeomorphism which leaves invariant two hyperbolic compact subsets $K_1$ and $K_2$. We suppose that $K_1$ and $K_2$ are the closure of periodic points subsets. Let $(W^s_{loc}(x))_{x\in K_1}$ be a continuous family of local stable manifolds of points of $K_1$; let $(W^u_{loc}(y))_{y\in K_2}$ be a continuous family of local unstable manifolds of points of $K_2$. We suppose that the intersection of   
$W^s_{loc}(x)$ with $W^u_{loc}(y)$ is transverse and compact for all $x\in K_1$ and $y\in K_2$. Then the union $K_{12}:=\cup_{(x,y)\in K_1\times K_2} W^s_{loc}(x)\cap W^u_{loc}(y)$  is a compact subset. Also $K:= K_1\cup K_2\cup \bigcup_{n\in \mathbb Z} f^n (K_{12})$ is an $AS$ compact subset. 

We notice that $K$ is not hyperbolic if the dimension of the stable directions of $K_1$ and $K_2$ are different. This is the case for perturbations of the conservative dynamics of the product of the Riemannian sphere with the real line  
												\[f:\; (z,t)\mapsto \Big(2 z, \frac{1+2|z|^2}{2+|z|^2} t\Big).\]
 For some perturbation $f$, the hyperbolic fixed point $K_1$ close to $(0,0)$ has a local unstable manifold that intersects transversally a local stable manifold of the hyperbolic fixed point $K_2$ close to $(\infty,0)$ at a circle $K_{12}$.          
\end{exem}

We will generalize in section \ref{1.1}   some of the dynamical properties of $AS$ diffeomorphisms to $AS$ compact subsets.

This is a first Theorem of this paper:
\begin{theo}\label{premier thm} If $K$ is an $AS$ compact subset for a $C^1$ diffeomorphism a manifold $M$ of dimension at most $2$, then $K$ is structurally stable.
\end{theo}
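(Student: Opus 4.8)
The plan is to reduce, via a spectral decomposition, to the persistence of a finite transverse heteroclinic web sitting over hyperbolic pieces, and then to transport that web under perturbation. First I would fix the structural skeleton of $K$, using the dynamical properties of $AS$ compact sets developed in Section~\ref{1.1}. By $(i)$ and Smale's spectral decomposition, $\Lambda:=\Omega(f|K)$ splits as a finite disjoint union of basic sets $\Lambda=\Lambda_1\sqcup\dots\sqcup\Lambda_k$, each $\Lambda_i$ being a locally maximal transitive hyperbolic set; and $(ii)$ forbids cycles among the $\Lambda_i$ (the classical no-cycle property of $AS$, which holds verbatim here), so there is a filtration $\emptyset=N_0\subseteq N_1\subseteq\dots\subseteq N_k$ of compact sets adapted to the $\Lambda_i$. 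Using the filtration and $(ii)$, every $z\in K\setminus\Lambda$ has $\omega(z)\subseteq\Lambda_i$ and $\alpha(z)\subseteq\Lambda_j$ for some $j$ above $i$, and some iterate $f^n(z)$ lies in a transverse intersection $W^s_\epsilon(x)\cap W^u_\epsilon(y)$ with $x\in\Lambda_i$, $y\in\Lambda_j$; hence
\[ K=\bigcup_{n\in\mathbb Z}f^n\big(\Lambda\cup H\big),\qquad H:=\bigcup_{x,y\in\Lambda}W^s_\epsilon(x)\cap W^u_\epsilon(y). \]
The set $H$ is compact, being the projection to $M$ of the closed subset $\{(x,y,z)\in\Lambda^2\times M: z\in W^s_\epsilon(x)\cap W^u_\epsilon(y)\}$, and by $(ii)$ it lies in $K$; this is the structure already met in Example~\ref{premier expl}, with the $\Lambda_i$ in place of $K_1,K_2$.

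Second I would transport this data. Each $\Lambda_i$, being locally maximal hyperbolic, admits for every $f'$ that is $C^1$-close to $f$ a hyperbolic continuation $\Lambda_i'$ together with a homeomorphism $h_i:\Lambda_i\to\Lambda_i'$ conjugating $f|\Lambda_i$ to $f'|\Lambda_i'$, $C^0$-close to the inclusion and depending continuously on $f'$ (structural stability of locally maximal hyperbolic sets). The local invariant manifolds $W^s_\epsilon(\cdot)$, $W^u_\epsilon(\cdot)$ depend continuously in the $C^1$ topology on the base point and on the diffeomorphism, and transversality is $C^1$-open; so for $f'$ close to $f$ and a slightly smaller $\epsilon'<\epsilon$ the manifolds $W^s_{\epsilon'}(x')$, $W^u_{\epsilon'}(y')$ over $x',y'\in\Lambda'$ are pairwise transverse, with intersection $C^0$-close to the one for $f$. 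Put
\[ H':=\bigcup_{x',y'\in\Lambda'}W^s_{\epsilon'}(x')\cap W^u_{\epsilon'}(y'),\qquad K':=\bigcup_{n\in\mathbb Z}f'^n\big(\Lambda'\cup H'\big). \]
As for $H$, the set $H'$ is compact; and $K'$ is compact because the filtration persists ($N_1,\dots,N_k$ still works for $f'$) and confines the orbit of $\Lambda'\cup H'$ — forward iterates of $H'$ accumulate only on $\Lambda'$ through contraction along stable leaves, backward iterates only on $\Lambda'$ along unstable leaves — and for the same reason $K'$ lies in an arbitrarily small neighbourhood of $K$.

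Third, the conjugacy. Set $h:=h_i$ on $\Lambda_i$, an embedding of $\Lambda$ onto $\Lambda'$ conjugating the dynamics, and extend it over $H$ by matching transverse intersections: the continuous families $\{W^s_\epsilon(x)\}_{x\in\Lambda}$ and $\{W^s_{\epsilon'}(h_i(x))\}$ are joined by a leafwise homeomorphism, likewise the unstable families, and together they carry $z\in W^s_\epsilon(x)\cap W^u_\epsilon(y)$ into $W^s_{\epsilon'}(h(x))\cap W^u_{\epsilon'}(h(y))$. Here $\dim M\le 2$ is used decisively: a transverse intersection $W^s_\epsilon(x)\cap W^u_\epsilon(y)$ is then a $0$- or $1$-dimensional submanifold (the $1$-dimensional case occurring only when one of $x,y$ belongs to an attracting or repelling periodic orbit), so the leafwise matching determines $h$ on $H$ unambiguously. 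Arranging the leafwise homeomorphisms to commute with $f$ and $f'$ on a fundamental domain for the contraction, respectively expansion, gives $h\circ f=f'\circ h$ on $H\cap f^{-1}(H)$; one then extends $h$ to all of $K$ by equivariance, $h|_{f^n(\Lambda\cup H)}:=f'^n\circ h\circ f^{-n}$. Continuity of $h$ at points of $\Lambda$ — the only place it is not evident — follows from the $C^0$-closeness of the whole local picture to $\Lambda$ together with the squeezing provided by the filtration and the inclination lemma; injectivity (the continuations $\Lambda_i'$ are disjoint and the heteroclinic part of $K'$ is wandering, hence disjoint from $\Lambda'$), the conjugacy relation, and $C^0$-closeness of $h$ to the inclusion $K\hookrightarrow M$ are then immediate.

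The main obstacle is this last step: producing a single homeomorphism $h:K\to K'$ that is simultaneously continuous across the frontier between $\Lambda$ and the wandering part $K\setminus\Lambda$ and globally equivariant. This is the analogue of the delicate gluing in the Robbin--Robinson proof of structural stability of $AS$ diffeomorphisms; what makes it tractable here is precisely the hypothesis $\dim M\le 2$, which forces the heteroclinic intersections to be $0$- or $1$-dimensional and the unstable (resp. stable) laminations to have leaves of dimension $\le 1$ away from attracting (resp. repelling) periodic orbits, so that the leafwise identifications and the estimates controlling continuity at $\Lambda$ stay elementary.
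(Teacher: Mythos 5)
Your strategy is the classical de Melo-style one (spectral decomposition, hyperbolic continuation of the basic pieces, matching of transverse heteroclinic intersections), which is genuinely different from the paper's route: the paper deduces Theorem \ref{premier thm} from the persistence of a stratification of laminations equipped with a trellis structure built on the local stable set $A=\bigcup_{x\in K}\tilde W^s_\epsilon(x)$ (Propositions \ref{prop2} and \ref{trellis construit}, then Theorem \ref{main}). Unfortunately your version has two genuine gaps. The first is the decomposition $K=\bigcup_{n}f^n(\Lambda\cup H)$ with $H=\bigcup_{x,y\in\Lambda}W^s_\epsilon(x)\cap W^u_\epsilon(y)$: this is false as written. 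If $z$ is a heteroclinic point between basic sets $\Lambda_j$ and $\Lambda_i$ lying more than $2\epsilon$ apart, then $f^n(z)\in W^u_\epsilon(y)$ with $y\in\Lambda$ forces $f^n(z)$ to be $\epsilon$-close to $\Lambda_j$, while $f^n(z)\in W^s_\epsilon(x)$ forces it to be $\epsilon$-close to $\Lambda_i$; hence no iterate of $z$ meets $H$. This can be repaired using a proper fundamental domain for $W^u(\Lambda)\setminus\Lambda$ as in Lemma \ref{HPSP}, but the reduction must be redone.

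The second gap is the decisive one. You assert that in dimension $\le 2$ the leafwise matching "determines $h$ on $H$ unambiguously" because the intersections are $0$- or $1$-dimensional. When the stable dimension of $\Lambda_i$ exceeds that of $\Lambda_j$ the transverse intersection $W^s_\epsilon(x)\cap W^u_\epsilon(y)$ is $1$-dimensional (a saddle's stable arc inside a source's unstable disc, or a sink's stable disc meeting a saddle's unstable arc) or even $2$-dimensional (a sink basin meeting a source basin, which produces open $2$-dimensional pieces of $K$ that your scheme does not address at all, cf. figure \ref{constructionfinal}.b). In those cases the intersection of the \emph{continued} invariant manifolds is again positive-dimensional, so it does not single out the image of $z$: one needs an auxiliary transverse structure, namely locally $f$-invariant foliations extending the stable (resp. unstable) laminations of the saddle pieces into neighborhoods, and these must be chosen \emph{compatibly} for all pairs of basic sets and in such a way that the resulting $h$ is continuous at the frontier between the strata $W^s_i$. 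This is exactly the point where de Melo's proof is flawed, as the Remark following Theorem \ref{premier thm} recalls (the assertion that the conjugacy "clearly preserves the foliation $\mathcal F_u$" fails when the stable dimensions of the pieces differ), and it is the reason the paper constructs the trellis of compatible tubular neighborhoods and invokes the persistence theorem for trellises. Your appeal to "arranging the leafwise homeomorphisms to commute with $f$ and $f'$ on a fundamental domain" handles equivariance along one lamination at a time, but not the coherence between the stable structure of one piece and the unstable structure of another, which is the actual obstacle; deferring it to "the analogue of the Robbin--Robinson gluing" leaves the heart of the theorem unproved.
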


\begin{exem} Let $f$ be a Morse-Smale diffeomorphism of a surface $M$. This means that $f$ is an $AS$ diffeomorphism with a finite nonwandering set. 
Consequently there exists an attracting periodic orbit $(p_i)_{i=1}^n$. We suppose that the eigenvalues of the derivative of $f^n$ at this orbit are not real.

 As an algebraic geometer we blow up each point $p_i$ to a circle $\mathbb S_i$ and a neighborhood of it to a Möbius strip. This makes a new surface $\tilde M$. The dynamics $f$ lifts to a dynamics $\tilde f$ on $\tilde M$ which preserves the circle and acts on them as a rotation. 
 
 Moreover, the manifolds $\tilde M\setminus \cup_i \mathbb S_i$ and $M\setminus \cup_i \{p_i\}$ are equal, also   the respective restrictions of the dynamics $\tilde f$ and $f$ are equal. Therefore, the complement of the attraction basin of $\cup_i \mathbb S_i$ is an $AS$ compact subset of $\tilde M$.  
\end{exem}
\begin{rema} The above Theorem implies obviously the main Theorem of \cite{dM} on structural stability of $AS$ surface diffeomorphisms. Actually the latter article is partially wrong because the affirmation ``Clearly $h_g$ also preserves the foliation $\mathcal F_u$'' P 244 L-3 is wrong when the stable dimensions of the basic pieces $k$ and $j$ are not the same. On the other hand Theorem \ref{premier thm} does not generalize \cite{Rs}, but its proof is hopefully easier to understand...
\end{rema} 

Another generalization of the structural stability Theorem of a hyperbolic set $K$ is to blow up every point of $K$ to a manifold, in order to obtain a 
family of disjoint immersed manifolds which depend locally $C^1$ continuously. This reaches the concept of lamination. 

A \emph{lamination} is a secondly countable metric space $L$ locally modeled on the product of $\mathbb R^n$ with a locally compact metric space $T$ such that the changes of coordinates are of the form:
\[\phi_{ij}:\; U_i\subset \mathbb R^n\times T\rightarrow U_j\subset \mathbb R^n\times T'\]
\[(x,t)\mapsto (g(x,t),\psi(x,t)),\]
where the partial derivative w.r.t. $x$ of $g$ exists and is a continuous function of both $x$ and $t$, and where $\psi$ is locally constant w.r.t. $x$. 

A \emph{plaque} is a set of the form $\phi^{-1}(\mathbb R^n\times \{t\})$ where $\phi$ is a chart. 
The \emph{leaf} of $x\in L$ is the union of all the plaques which contain $x$.
A maximal atlas  $\mathcal L$ of compatible charts is a \emph{lamination structure} on $L$. Given an open subset $L'$ of $L$, we denote by $\mathcal L{|L'}$ the structure of lamination on $L'$ formed by the charts of $\mathcal L$ whose domain is included in $L'$.

 The reader not familiar with the laminations should look at \cite{ghys}, \cite{berlam}.

Laminations are specially interesting when they are embedded. An \emph{embedding} of a lamination $(L,\mathcal L)$ into a manifold $M$ is a homeomorphism $i$ onto its image which is an \emph{immersion}. A continuous map $i:\; L\rightarrow M$ is an \emph{immersion} if its differential along the plaques of $\mathcal L$ exists, is injective and depends continuously on $x\in L$. Two embeddings are \emph{$C^1$ close} if they are close in the $C^0$-compact-open topology  and their differential along the plaques are close for the compact-open topology. The reader might see \cite{berlam} for more details about this topology.    

Usually we identify an embedded lamination with its image. We note that its plaques are submanifolds of $M$ and its leaves form a family of disjoint injectively immersed  submanifolds. Thus the tangent space $T_x\mathcal L$ at $x\in L$ of its leaf is identified to a subspace of the tangent space $T_xM$ of $M$ at $x$.

 A diffeomorphism $f$ of $M$ \emph{preserves} this lamination if it sends each leaf of $\mathcal L$ into a leaf of $\mathcal L$, or equivalently each plaque of $\mathcal L$ into a plaque of $\mathcal L$. 

Such a lamination is $C^1$-persistent if for $f'$ $C^1$ close to $f$ there exists an embedding $i'$ $C^1$ close to $i$ such that $f'$ preserves $\mathcal L$ embedded by $i'$ and induces the same dynamics on the leaves of $\mathcal L$ as $f$. We notice that when the \emph{dimension} of the lamination (that is of its leaves) is zero, then the lamination $\mathcal L$ is persistent {\it iff} the subset $L$ is structurally stable.

Let us recall that a diffeomorphism $f$ is \emph{normally hyperbolic} to an embedded lamination $(L,\mathcal L)$ if 
the tangent
bundle of $M$ restricted to $L$ is the direct sum of $T\mathcal L$ with two $Tf$ invariant subbundles $E^s\rightarrow L$ and  $E^u\rightarrow L$ such that the following property holds. There exists $\lambda<1$ such that for every $x\in L$, for every $n\ge 0$ sufficiently large, for  all unit vectors $v_c\in T_x\mathcal L$,
$v_s\in E^s_x$ and $v_u\in E^u_x$:
\[  \|T_xf^n (v_s)\|<\lambda^n \min(1,\|T_xf^n(v_c)\|)\quad \mathrm{and}\quad  \lambda^n\|T_xf^n (v_u)\|> \max(1,\|T_xf^n(v_c)\|).
  \]
When $L$ is compact, $n$ can be chosen independently of $x$. Otherwise this is in general not the case.
If the strong stable direction $E^s$ is $0$, then the lamination $(L,\mathcal L)$ is \emph{normally expanded} by $f$.

The founders of this way of generalization were Hirsch, Pugh and Shub (HPS). This is their main Theorem:
\begin{theo}[Hirsch-Pugh-Shub \cite{HPS}]\label{HPS}
Compact, normally hyperbolic and plaque-expansive laminations are persistent.
\end{theo}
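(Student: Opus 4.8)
The plan is to run the Hadamard graph transform, treating the plaque (leaf) dynamics as a ``base'' and the normal splitting $E^s\oplus E^u$ as a pair of hyperbolic ``fibers'', with plaque expansiveness playing exactly the role that expansiveness of the base plays in the classical normally hyperbolic skew-product case. Recall that plaque expansiveness means, relative to a chosen plaquation, that two pseudo-orbits which jump within plaques at each step and stay uniformly close for all times must lie, at time $0$, in a common plaque.

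First I would fix an adapted Finsler metric on $M$, extend $T\mathcal L$, $E^s$ and $E^u$ continuously to a neighborhood of $L$, and introduce the associated center-stable, center-unstable, strong-stable and strong-unstable cone fields; normal hyperbolicity says that $Tf$ pushes the center-stable cone strictly inside itself while contracting its $E^s$-component more than it can contract any center vector, and dually on the center-unstable side. Then I would build a ``normal tube'': for each plaque $P$ of $\mathcal L$ a $C^1$ fibration $\pi_P$ of a neighborhood of $P$ onto $P$, with fibers tangent to $E^s\oplus E^u$ and depending continuously on $P$, so that $i(L)$ is the zero section. A candidate perturbed lamination is a continuous section $\sigma$ of this tube which is Lipschitz along the plaques with a fixed small constant (the discretized center-cone condition), and the space $\mathcal X$ of such sections with the $C^0$-distance is complete.

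The central step is the graph transform $\mathcal T_{f'}\colon\mathcal X\to\mathcal X$: given $\sigma$, the set $f'(\sigma(L))$ is again a lamination-like object inside the tube, and for $x\in L$ one lets $\mathcal T_{f'}(\sigma)(x)$ be the point where the $f'$-image of the $\sigma$-plaque over $f^{-1}(x)$ meets the fiber over $x$, transporting the plaque and leaf labels by $f$. The cone estimates show that $\mathcal T_{f'}(\sigma)$ is again admissible with the same Lipschitz bound --- the normal directions being dominated by the center --- so $\mathcal T_{f'}$ is a self-map of $\mathcal X$. Running this construction on the center-stable side and, dually, on the center-unstable side gives, by the contraction mapping principle, two invariant laminations $W^{cs}$ and $W^{cu}$; their intersection is a fixed point $\sigma_{f'}$, whose image is an $f'$-invariant set $L'=\mathcal L'$ carrying an embedding $i'$. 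A standard bootstrap (the $C^1$ section theorem) promotes $\sigma_{f'}$ to a $C^1$ section, and since its Lipschitz constant can be taken arbitrarily small for $f'$ close to $f$, the embedding $i'$ is $C^1$-close to $i$; moreover $\mathcal L'$ inherits a lamination structure whose plaques project diffeomorphically onto those of $\mathcal L$, and by construction $f'$ carries plaques of $\mathcal L'$ into plaques while $i'$ intertwines the leaf combinatorics of $f$ on $\mathcal L$ with that of $f'$ on $\mathcal L'$.

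The delicate point, and where I expect the real work, is to make this construction canonical --- a priori $\sigma_{f'}$ depends on the auxiliary choices (the fibration $\pi_P$, the chosen plaquation, the transport of labels) --- and this is exactly where \emph{plaque expansiveness} is used. One shows that any two $f'$-invariant laminations inside the normal tube which carry the same combinatorial leaf dynamics as $\mathcal L$ must coincide: the $f'$-orbit of a point of one and of the matching point of the other form two plaque-respecting $\varepsilon$-pseudo-orbits of $f$ that stay $\varepsilon$-close for all times, so plaque expansiveness (an open condition, stable under the perturbation together with its constant) forces them into a common plaque at each time; feeding this back into the center-cone uniqueness of the graph transform along each plaque pins the two laminations together. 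This uniqueness is what upgrades ``$f'$ has an invariant lamination near $\mathcal L$'' to ``$\mathcal L$ is persistent'', and it also yields the remaining routine verifications --- that the transported charts genuinely satisfy the lamination axioms (compatibility, with the partial derivative along plaques continuous in both variables) and that $i'$ is injective, hence an embedding --- from the uniform cone control and the smallness of $f'-f$.
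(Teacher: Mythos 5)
This theorem is not proved in the paper: it is quoted verbatim from Hirsch--Pugh--Shub \cite{HPS}, so there is no in-paper argument to compare against. Your sketch reproduces the standard architecture of the HPS proof (graph transform in a normal tube, cone-field/Lipschitz control, intersection of the center-stable and center-unstable objects, and plaque expansiveness to force self-coherence and uniqueness of the perturbed lamination), and it correctly invokes plaque expansiveness only for pseudo-orbits of the \emph{unperturbed} $f$; the one parenthetical claim that plaque expansiveness is ``an open condition, stable under the perturbation'' is unjustified (the paper itself notes its automaticity is open) but is not load-bearing in your argument.
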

This Theorem is the fundamental one of the partially hyperbolic dynamics field.

The plaque-expansiveness is a generalization of the expansiveness to the space of leaves. The necessity and automaticity of the plaque-expansiveness in the above Theorem is still an open problem.
We will give its  definition in Section \ref{plaque-expansive}.

 A paradigmatic application of the HPS's Theorem is when $f$ is the product dynamics of an Anosov diffeomorphism of a compact manifold $M$ with the identity of a compact manifold $N$. Then \emph{the bundle $M\times N\rightarrow M$ is $C^1$-persistent as a lamination}. This means that for every $f'$ $C^1$ close to $f$, there exists a continuous family of disjoint $C^1$ submanifolds $(N'_x)_{x\in M}$ s.t. $N_x'$ is $C^1$ close to $\{x\}\times N$ and $f'(N_x')=N_{f'(x)}$.

Let us generalize the above example by considering that  $f$ is the product dynamics of an $AS$ diffeomorphism $g$ of $M$ with the identity of a manifold $N$. Then the persistence of $\mathcal L$ is not a consequence of the HPS' Theorem if $g$ is not Anosov nor a consequence of the structural stability Theorem if $N$ has non-zero dimension. However it is the consequence of the main Theorem of this paper if the dimension of $M$ is $2$ (the case of dimension $1$ is easy).

\begin{theo}[Main result]\label{Main result} Let $\pi:\; \hat M\rightarrow M$ be a $C^1$ bundle over a surface with compact fibers. 
Let $K$ be an $AS$ compact subset for a diffeomorphism $f$ of $M$. Let $\hat f$ be a diffeomorphism of $\hat M$ which preserves the bundle $\pi$ and lift $f$. We suppose that the bundle is normally hyperbolic over
the nonwandering set of $f|K$. Then the bundle over $K$ is $C^1$-persistent as a lamination. In other words the lamination $(L,\mathcal L)$ supported by $\pi^{-1}(K)$ and whose leaves are the connected component of fibers of $K$'s points is $C^1$-persistent.\end{theo}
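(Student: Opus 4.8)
The plan is to reduce the persistence of the lamination $(L,\mathcal{L})$ over the whole $AS$ set $K$ to two ingredients that are each available: the persistence of the lamination over the \emph{nonwandering set} $\Lambda = \Omega(f|K)$, which is compact and over which $\hat f$ is normally hyperbolic, and the structural stability of $K$ itself, which is Theorem~\ref{premier thm} since $\dim M \le 2$. First I would invoke Theorem~\ref{HPS} (together with the automatic plaque-expansiveness in the normally hyperbolic case, or rather the version for normally hyperbolic laminations over hyperbolic sets): for $f'$ $C^1$-close to $f$ with its lifted dynamics $\hat f'$, the bundle $\pi^{-1}(\Lambda') \to \Lambda'$ is persistent, giving a continuous family of fibers $(\hat N'_y)_{y\in\Lambda'}$ that are $C^1$-close to the original fibers, invariant under $\hat f'$, and carrying the same leaf dynamics. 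Here $\Lambda'$ is the hyperbolic continuation of $\Lambda$. Simultaneously, structural stability of $K$ produces $K'$ homeomorphic to $K$ by a conjugacy $h$ $C^0$-close to the inclusion.

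The second, and main, step is to \emph{propagate} the invariant family from $\Lambda'$ to all of $K'$ along the dynamics, using the local product structure coming from condition $(ii)$ in the definition of $AS$. The structure to exploit is that every point $x\in K$ lies on a local stable manifold $W^s_\epsilon(p)$ and a local unstable manifold $W^u_\epsilon(q)$ with $p,q$ in $\Lambda$ (this is the content of the "dynamical properties of $AS$ compact subsets" promised in Section~\ref{1.1}; morally $\omega(x),\alpha(x)\subset\Lambda$, and the intersection $W^s_\epsilon(p)\cap W^u_\epsilon(q)$ lies in $K$). Over $\Lambda'$ we already have the invariant fibers; I would then extend the family of fibers over the local stable and unstable manifolds of $\Lambda'$ by a graph-transform argument in the bundle $\hat M$, exactly as in the classical construction of strong stable/unstable laminations in the HPS setting, and check that these extensions are $C^1$-close to the original fibers and that the leaf dynamics is preserved. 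Because $K$ is the union of $\Lambda$ with finitely-many–forward-and-backward iterates of such product pieces (cf. Example~\ref{premier expl}), the families glue consistently: on overlaps one gets the same fiber because it is simultaneously the graph transform from the stable side and from the unstable side, and both are pinned down to the fiber over the common point of $\Lambda'$. Pushing and pulling by $\hat f'$ then defines $\hat N'_x$ for every $x\in K'$ and guarantees $\hat f'(\hat N'_x) = \hat N'_{f'(x)}$.

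The remaining step is to assemble this into an actual embedding $i'$ of the abstract lamination $(L,\mathcal{L})$ that is $C^1$-close to $i$, and to verify it is a lamination embedding in the sense defined above: one defines $i'$ on the fiber over $x$ by the identification of $\hat N'_x$ with $\hat N_{h^{-1}(x)} \cong$ the corresponding fiber of $L$, using the conjugacy $h$ on the base and the fiber identifications on $\Lambda'$ transported by the graph transforms. Continuity of the plaque-tangent spaces in the transverse (i.e.\ base) direction, and their $C^1$-closeness to the original, follows from the uniform $C^1$-closeness of the graph transforms together with the continuity already built into the HPS family over $\Lambda'$. The hard part I anticipate is precisely the \emph{well-definedness and uniform $C^1$-control of the extension off} $\Lambda'$: a point of $K$ may lie on local stable/unstable manifolds of several points of $\Lambda$, and its forward and backward orbit may re-enter the product region, so one must show the graph transform converges and is independent of the chosen $p,q$; this is where condition $(ii)$ — transversality of $W^s_\epsilon$ with $W^u_\epsilon$ with intersection inside $K$, uniformly in the base point — is used in an essential way, playing the role that the strong transversality condition plays in de Melo's and Robbin's proofs, but now fiberwise in $\hat M$. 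The restriction $\dim M \le 2$ enters only through Theorem~\ref{premier thm}, i.e.\ to know $K'$ exists at all; the bundle argument itself is dimension-free.
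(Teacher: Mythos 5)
Your first step (HPS over $\pi^{-1}(\Lambda)$, which is compact and normally hyperbolic, plus plaque-expansiveness) is sound and is indeed used in the paper. But the second step --- ``propagate the invariant family from $\Lambda'$ to all of $K'$ by a graph-transform argument along local stable and unstable manifolds, and glue'' --- is where the entire difficulty of the theorem lives, and your sketch does not engage with the actual obstruction. Off $\Lambda$ the lamination is \emph{not} normally hyperbolic, so there is no contraction principle making a fiberwise graph transform converge, nor making its limit independent of the choices of $p,q\in\Lambda$; and your structural picture of $K$ as ``$\Lambda$ together with finitely many forward-and-backward iterates of product pieces'' is false in general. The sets $W^s_i=W^s(\Lambda_i)\cap K$ accumulate on one another according to the frontier relation $cl(W^s_i)\supset W^s_j$ (Proposition \ref{prop1}), so a single point of $K$ is a limit of points lying over infinitely many distinct ``product pieces'' coming from basic sets of possibly different index; the coherence and $C^1$-continuity of any extension at such accumulation points is exactly the crux. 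The paper resolves this not by a pointwise graph transform but by building a \emph{stratification of laminations} on a local stable set $A=\cup_{x\in K}\tilde W^s_\epsilon(x)$ (Proposition \ref{prop2}) together with a compatible, locally $f$-invariant \emph{trellis} of tubular neighborhoods (Proposition \ref{trellis construit}), lifting these by $\pi$, and invoking the persistence theorem for such stratified structures (Theorem \ref{main}, from \cite{Bermem}); the new fiber over $x$ is then obtained as the transverse intersection $i^s(\mathcal L^s_\eta(x))\pitchfork i^u(\mathcal L^u_\eta(x))$ of the persistent stable-side and unstable-side objects, followed by a separate cone-field argument to push the construction into the attracting and repelling pieces and to prove continuity there (via uniqueness of normally hyperbolic manifolds and projective hyperbolicity).

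This also refutes your closing claim that ``the restriction $\dim M\le 2$ enters only through Theorem \ref{premier thm}.'' The surface hypothesis is used essentially in constructing the trellis: the tubular neighborhoods of the one-dimensional strata are built from de Melo's extension of the stable foliation and a smoothing lemma for one-dimensional $C^1$ foliations, and the paper states explicitly that the existence of such tubular neighborhoods is open in higher dimension. (Note also that Theorem \ref{premier thm} is itself the zero-dimensional-fiber case of the same machinery, not an independent ingredient one can cite to produce $K'$ and then bootstrap from.) So the proposal identifies the right raw materials but is missing the key idea --- the trellis/stratification framework and its persistence theorem --- that makes the extension off $\Lambda$ well defined.
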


A simple application of this Theorem is when $K$ is equal to $M$ and so $f$ is an $AS$ diffeomorphism of a surface; $\hat f$ is the product dynamics on $\hat M=M\times N$ of $f$ with the identity on a compact manifold $N$.  

\begin{exem} Let us come back to the dynamics $f:\;(z,t)\mapsto \big(4z, \frac{1+2|z|^2}{2+|z|^2} t\big)$ of the product of the sphere with the real line in Example \ref{premier expl}. This is an $AS$ bundle over the sphere. In order to use
 Theorem \ref{Main result}, we compactify canonically the fibers to circles. Such a compact bundle is persistent. This dynamics appears in some hetero-dimensional cycles of dynamics of $\mathbb R^3$.
For instance when the cycle is supported by the union of the unit sphere with the vertical line passing through the poles $\{0,\infty\}$ of the sphere, and whose dynamics at the neighborhood of unit sphere is $f$. The persistence of this bundle might be useful for showing some non-uniform hyperbolic properties of perturbations of this hetero-dimensional cycle.\end{exem}    

We notice that contrary to the previous Theorems, the hypothesis of this one is not open. Actually the proof will exhibit a class of laminations which is open, but too complicated to be described here. Though the above Theorem generalizes the structural stability Theorem in dimension two, it does not generalize the HPS Theorem. In order to state a conjecture generalizing both results, let us recall the definition of a saturated set.

A \emph{saturated set} $\Lambda$ of a lamination $(L,\mathcal L)$ embedded into a manifold $M$ is a union of leaves of $\mathcal L$. If $K$ is a subset of $L$, the $\mathcal L$-saturated set of $K$ is the union of the leaves of $K$'s points. We note that if $K$ is an invariant compact subset and if $L$ is compact, then its $\mathcal L$-saturated set is an invariant compact subset.

\begin{defi}A compact lamination $(L,\mathcal L)$ embedded into a manifold $M$ and preserved by a diffeomorphism $f$ is \emph{normally $AS$} if there exists $\epsilon>0$ such that:
\begin{itemize}
 \item The saturated $\Omega(\mathcal L)$ of the nonwandering set of $f$ restricted to $L$ is  normally hyperbolic (and plaque-expansive),
 \item For any pair of $\mathcal L$ leaves, the $\epsilon$-local stable set of the one (which is an immersed submanifold) is transverse to the $\epsilon$-local unstable set 
of the other, and their intersection is included in $L$,
\item $\Omega(\mathcal L)$ is locally maximal in $M$.
\end{itemize}
\end{defi}

This is our conjecture:
\begin{conj}\label{3}
Compact normally $AS$ laminations are $C^1$-persistent.
\end{conj}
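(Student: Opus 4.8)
The plan is to graft the proof scheme of Theorem~\ref{Main result} onto the Hirsch--Pugh--Shub machinery, using the saturated nonwandering set $\Omega(\mathcal L)$ as a skeleton from which the whole lamination is reconstructed after perturbation. First I would invoke Theorem~\ref{HPS}: since $\Omega(\mathcal L)$ is compact, normally hyperbolic and plaque-expansive, for every $f'$ that is $C^1$ close to $f$ there is an embedding $i'_0$, $C^1$ close to $i|\Omega(\mathcal L)$, of $\Omega(\mathcal L)$ onto an $f'$-invariant compact lamination carrying the same leaf dynamics. Simultaneously, the graph-transform part of HPS provides the persistent strong stable and strong unstable laminations $W^s(\Omega(\mathcal L))$ and $W^u(\Omega(\mathcal L))$; these have non-compact leaves, but they and their tangent distributions still vary $C^1$-continuously with $f'$ on compact pieces. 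All the work is then to propagate this persistence over $\Omega(\mathcal L)$ to the wandering leaves of $L$.

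For that second step I would use the normally $AS$ hypotheses. Local maximality of $\Omega(\mathcal L)$ in $M$ forces every leaf of $L\setminus\Omega(\mathcal L)$ to have $\alpha$- and $\omega$-limit sets inside $\Omega(\mathcal L)$, so after finitely many iterates it sits on a transverse intersection of a local stable leaf and a local unstable leaf of $\Omega(\mathcal L)$; the second bullet of the definition says this intersection stays inside $L$. Transversality is open and, through the implicit function theorem, persistent: for $f'$ close to $f$ the intersections of the perturbed laminations $W^s(\Omega(\mathcal L))$ and $W^u(\Omega(\mathcal L))$ cut out a compact $f'$-invariant set $L'$ near $L$, together with an embedding $i'$ extending $i'_0$. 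The plaque structure is transported from $L$ because the intersection of two transverse plaques depends $C^1$-continuously on the plaques, and the induced leaf dynamics is the prescribed one by construction.

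The step I expect to be the main obstacle is to check that $i'$ is globally well defined and realises a genuine leaf conjugacy, not merely a fibrewise matching valid in separate neighbourhoods of the basic pieces. This is precisely where the argument of \cite{dM} breaks: when the stable dimensions of two basic pieces differ, one cannot choose globally coherent invariant stable and unstable foliations, so the naive straightening fails. Instead I would proceed along a filtration of $K$ adapted to the spectral decomposition of $\Omega(f|K)$, as in the proof of Theorem~\ref{Main result}: build the embedding over a neighbourhood of the lowest basic piece, then extend it across each connecting region using the \emph{uniqueness} of the transverse intersection leaf together with a patching (partition-of-unity) argument compatible with the HPS graph transforms in the normal directions; plaque-expansiveness of $\Omega(\mathcal L)$ is what guarantees the extension is unambiguous on the leaf space. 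The delicate technical content lies in reconciling the normal graph-transform estimates of HPS with the shadowing-type estimates that control the wandering leaves — and in handling the non-compactness of $W^s$ and $W^u$ — so that the two mechanisms glue into a single self-consistent family of embeddings depending continuously on the perturbation. That reconciliation, rather than any individual estimate, is what keeps the statement a conjecture.
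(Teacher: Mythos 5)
The statement you are addressing is stated in the paper as a conjecture, not a theorem: the paper proves it only in the special case where the leaves are the connected components of the fibers of a $C^1$ bundle over a surface (Theorem \ref{Main result}). Your outline follows the same overall strategy as the paper's proof of that special case --- persistence of the saturated nonwandering set by Theorem \ref{HPS}, then propagation to the wandering leaves along a filtration adapted to the spectral decomposition --- so there is no disagreement of approach; but the decisive step is missing from your argument exactly where it is missing in general, and you have not supplied a new idea to fill it.

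Concretely, the gap is the construction of the trellis structure. For each stratum $X_i$ (the stable set of a basic piece intersected with the lamination, thickened by adapted local stable manifolds) one needs a lamination $(L_i,\mathcal L_i)$ on a \emph{neighborhood} of $X_i$ that is locally $f$-invariant, transverse to the local unstable manifolds, and compatible with the tubular neighborhoods of the lower strata. Your proposal replaces this by ``a patching (partition-of-unity) argument compatible with the HPS graph transforms,'' but laminations are not linear objects that can be averaged by a partition of unity, and the simultaneous requirements of local invariance and compatibility across strata of different stable dimensions are precisely what make the construction hard (this is also where the argument of \cite{dM} breaks, as you note). The paper carries out the construction only when the intermediate strata are one dimensional (Proposition \ref{trellis construit}), using de Melo's smoothing lemma for one dimensional $C^1$ foliations and a proper fundamental domain for $W^u(\Lambda_{k+1})$, and it explicitly records that the existence of such tubular neighborhoods is an open problem in higher dimension. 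Without the trellis, the persistence theorem for stratifications (Theorem \ref{main}) cannot be invoked, your perturbed set $L'$ is only defined near the heteroclinic intersections of \emph{local} invariant manifolds rather than on all of $L$, and the extension of the leaf conjugacy across the connecting regions --- which you correctly identify as the main obstacle --- has no mechanism to proceed. What you have written is a reasonable research program, essentially the one the paper itself proposes, but not a proof.
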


We notice that the main Theorem solves this conjecture when the leaves of the foliation are the connected components of a preserved $C^1$ bundle over a surface.

Physically we can interpret this conjecture as follows. The modeling  of a phenomenon neglects usually many variables. These variables are geometrically the fibers of a bundle, or even the leaves of a lamination. The dynamics along these fibers is weaker than the one on the basis, thus an $AS$  compact subset is blown up to an $AS$ lamination.
Thus we propose here to show that if we do not neglect these variables and perturb the full system, then the new system is conjugated to first modulo new negligible variables.
\newline

Half of this work has been established during my PhD thesis under the direction of J.-C. Yoccoz at Université Paris Sud. I am very grateful for his guidance. The rest of this work was done at the CRM (Bellatera, Spain), the IHÉS (Bures-sur-Yvette, France) and the LAGA (Paris, France).

\section{Geometry and dynamics of $AS$ compact subsets}

Axiom $A$ and $AS$ diffeomorphisms were deeply studied in the 60-70's. We are going to generalize some of these results to  $AS$ compact subsets. 

Let $f$ be a diffeomorphism of a compact manifold $M$ and let $K$ be an $AS$ compact subset $f$-invariant. Let $\Lambda$ be the nonwandering set of the restriction of $f$ to $K$.

For $x\in\Lambda$, we denote by $W^s_K(x)$ the intersection of the stable manifold of $x$ with $K$; we denote by $ W^s_{K\epsilon}(x)$ the union of the $\epsilon$-local stable  manifolds of points of $W^s_K(x)$. We define similarly $W^u_K(x)$ and $W^u_{K\epsilon}(x)$.

Let $\lambda<1$ be greater than the contraction of the stable direction of $\Lambda$.  Let $\tilde d:\; (x,y)\mapsto \sup_{n\ge 0} \min\big(\frac{d(f^n(x),f^n(y))}{\lambda^n}, 1)$. 
For $x\in K$ and $\epsilon\in (0,1)$, let  $\tilde W^s_\epsilon(x)$ be the ball centered at $x$ and with radius $\epsilon$ for this metric $\tilde d$. 

We notice the following property:
\begin{fact}\label{factpedago} For every $x\in K$, the ball $\tilde W^s_\epsilon$ is a local stable manifold of $x$. Moreover:
\begin{itemize} 
\item $f\big(\tilde W^s_\epsilon(x)\big)\subset  \tilde W^s_{\lambda\epsilon}\big(f(x)\big)$,
\item there exists $n>0$ such that $\tilde W_\epsilon^s(x)=\cap_{m\le n} f^{-m}\Big(W^s_{\lambda^m\epsilon } \big(f^m(x)\big)\Big)$,
\item $cl(\tilde W^s_{\epsilon}(x))=\cap_{\epsilon'>\epsilon} \tilde W^s_{\epsilon'}(x)$.\end{itemize}
\end{fact}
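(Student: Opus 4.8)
\emph{Plan.} The function $\tilde d$ is an adapted (Lyapunov-type) metric for the stable direction, and the four assertions are precisely the basic properties one expects of the ``$\tilde d$-balls''; the proof is essentially an unwinding of definitions, the one genuine ingredient being the uniform exponential contraction of $f$ along the local stable manifolds of the points of $K$ (a consequence of the hyperbolicity of $\Lambda$ in $(i)$). Throughout write $a_n=a_n(x,y):=d(f^n(x),f^n(y))/\lambda^n$, so that $\tilde d(x,y)=\sup_{n\ge0}\min(a_n,1)$. The first thing to record is that, since $\epsilon<1$, the inequality $\tilde d(x,y)<\epsilon$ forces $\min(a_n,1)<\epsilon<1$, hence $a_n<\epsilon$, for \emph{every} $n$; in particular $y\in\tilde W^s_\epsilon(x)$ implies $d(f^n(x),f^n(y))<\lambda^n\epsilon\le\epsilon$ for all $n$, so $\tilde W^s_\epsilon(x)\subset W^s_\epsilon(x)$.

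The displayed inclusion of the first bullet is then immediate: if $y\in\tilde W^s_\epsilon(x)$ then every $a_{n+1}<\epsilon<1$, so $\min(\lambda a_{n+1},1)=\lambda a_{n+1}$, and
\[\tilde d(f(x),f(y))=\sup_{n\ge0}\min\big(\lambda\, a_{n+1},1\big)=\lambda\sup_{n\ge1}a_n\le\lambda\,\tilde d(x,y)<\lambda\epsilon.\]
To see that $\tilde W^s_\epsilon(x)$ is a local stable manifold of $x$ I would combine $\tilde W^s_\epsilon(x)\subset W^s_\epsilon(x)$ with the structure theory of $AS$ sets: for $\epsilon$ small $W^s_\epsilon(x)$ is a $C^1$ disc through $x$ on which $f$ contracts uniformly, say $d(f^k(z),f^k(z'))\le C\mu^k d(z,z')$ with $\mu<\lambda$. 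Since the tail $\sup_{n\ge N}\min(a_n,1)$ is then uniformly small on $W^s_\epsilon(x)$, the map $z\mapsto\tilde d(x,z)$ is on $W^s_\epsilon(x)$ a uniform limit of continuous functions, hence continuous; so $\tilde W^s_\epsilon(x)$ is relatively open in $W^s_\epsilon(x)$, and as it obviously contains a neighbourhood of $x$ there, it is a neighbourhood of $x$ in $W^s(x)$, i.e. a local stable manifold of $x$.

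For the second bullet note that $f^{-m}\big(W^s_{\lambda^m\epsilon}(f^m(x))\big)=\{y:\ d(f^j(x),f^j(y))<\lambda^m\epsilon\ \forall j\ge m\}$, so the intersection over $m\le n$ equals $\{y:\ d(f^j(x),f^j(y))<\lambda^{\min(n,j)}\epsilon\ \forall j\}$, which contains $\tilde W^s_\epsilon(x)$ for every $n$. For the reverse inclusion fix $C,\mu$ as above and $N_0$ with $C(\mu/\lambda)^{N_0}<1$, and choose $n$ (depending on $x$) so large that $f^k(x)$ for $k\ge n-N_0$ lies in the region where the contraction estimate is valid and $\lambda^{n-N_0}\epsilon$ is smaller than the size of the local stable manifolds there — this uses that the forward orbit of a point of $K$ eventually stays in any neighbourhood of $\Lambda$. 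Given $y$ in the intersection and $j>n$, apply the contraction on $W^s_{\lambda^m\epsilon}(f^m(x))$ with $m=n$ when $j\ge n+N_0$ and with $m=j-N_0$ otherwise; in either case $j-m\ge N_0$, whence $d(f^j(x),f^j(y))\le C(\mu/\lambda)^{j-m}\lambda^j\epsilon<\lambda^j\epsilon$, and together with the constraints for $j\le n$ this yields $\sup_j a_j<\epsilon$ (a maximum over finitely many indices), i.e. $y\in\tilde W^s_\epsilon(x)$. Finally the third bullet: $\cap_{\epsilon'>\epsilon}\tilde W^s_{\epsilon'}(x)=\{y:\tilde d(x,y)\le\epsilon\}$ plainly contains $cl(\tilde W^s_\epsilon(x))$, and the reverse inclusion follows from the second bullet — which presents both sides as (the closure of, resp.\ the ``closed'' version of) one fixed finite intersection $\cap_{m\le n}f^{-m}(W^s_{\lambda^m\epsilon}(f^m(x)))$ — together with the standard fact that $cl(W^s_\delta(p))=\{z:\ d(f^k(p),f^k(z))\le\delta\ \forall k\}$ for the $AS$ local stable manifolds.

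The only real obstacle is the second bullet, or rather the quantitative input it rests on: one must establish uniform exponential contraction of $f$ along the local stable manifolds of \emph{all} points of $K$, with rate strictly dominated by $\lambda$, and control the time at which forward orbits of $K$-points enter the hyperbolic neighbourhood of $\Lambda$. Granting that (and the qualitative local-manifold structure of $AS$ sets used in the first and third bullets), the rest is bookkeeping, the only nuisance being to keep all the inequalities strict.
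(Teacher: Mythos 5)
Your proof is correct and follows essentially the same route as the paper's (which is only a three-line sketch): the key point in both is that the forward orbit of $x$ eventually lands on the local stable lamination of $\Lambda$, where the strict domination of the actual contraction rate by $\lambda$ makes the adapted and ordinary local stable manifolds coincide, so the infinitely many defining inequalities reduce to a finite intersection. Your version merely makes the constants ($C$, $\mu$, $N_0$) explicit and supplies the continuity argument for the first item that the paper dismisses as ``clear''.
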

\begin{proof} The first item is clear and the last is an immediate consequence of the second. To show the second we notice that the point $x$ has its iterate $f^n(x)$ which belongs to $\cup_{y \in \Lambda} W^s_\epsilon(y)$ for some large $n$, and then $W^s_\epsilon(f^n (x))=\tilde W^s_\epsilon(f^n (x))$.\end{proof}

We call $\tilde W^s_\epsilon(x)$ \emph{the adapted $\epsilon$-local stable manifold of $x$}. 

The adapted $\epsilon$-local stable manifolds are inspired from the work of Robinson: their geometry is similar to the one of ``unstable disk families'' well described in \cite{Robgeo}.

\subsection{Dynamics on K}\label{1.1}
Let us describe the dynamics on $K$ by adapting classical arguments (see \cite{Sm}, \cite{Sh}).

If for two periodic points $x,y\in\Lambda$, the set  $W^s_{K}(x)$ intersects $W^u_{K}(y)$ and $ W^s_{K}(y)$ intersects $W^u_{K}(x)$, we write $x\sim y$. This relation is obviously reflexive and symmetric. Let us show its transitivity.
 
First we note that two points in relation  $\sim$ have their stable manifold (resp. unstable manifold) of the same dimension. Moreover, the relation is $f$-invariant:  $x\sim y$ iff $f(x)\sim f(y)$.

Let $x$, $y$ and $z$ be three periodic points of $\Lambda$ such that $x\sim y$ and $y\sim z$. 
By the $f$-invariance of $\sim$, the points $x$ and $z$ are in relation for $f$ if their images by any iterates of $f$ are in relation. So  we can suppose that $x$, $y$ and $z$ are fixed points of $f$.
Let $y_x$ and $y_z$ be two points of the intersections $W^s_K(x)\cap W^u_K(y)$ and $W^s_K(y)\cap W^u_K(z)$ respectively. 
By iterating the dynamics, we may assume that $y_x$ and $y_z$ are close to $y$. By the lambda lemma, $W^s_\epsilon(y_x)$ and  $W^u_\epsilon(y_z)$ are close to $W^s_\epsilon(y)$ and  $W^u_\epsilon(y)$ respectively. Thus the intersection $W^s_\epsilon(y_x)\cap W^u_\epsilon(y_z)\subset W^s_{K\epsilon}(x)\cap W^u_{K\epsilon}(z)$  is not empty.
And so by Property $ii$ of the $AS$ definition, the intersection
 $W^u_K(x)\cap W^s_K(z)$ is non empty. 
Such an argument proves that $x\sim z$.  Thus the relation $\sim$ is an equivalence relation.

As two close  points of $ \Lambda\cap Per(f)$ are equivalent; the sets supporting two different equivalence classes of $\sim$ are $\delta$-distant for some $\delta>0$. 

We denote by $(\Lambda_{i,j})_{i,j}$ the closure of the equivalence classes in $\Lambda$.  
They are $\delta$-distant and their union is equal to the compact set $\Lambda$, by density of the periodic points. Thus there are finitely many equivalence classes. By $f$ invariance of $\sim$, these equivalence classes are all periodic. We put $f(\Lambda_{i,j})=\Lambda_{i,j+1}$ and $\Lambda_i:= \cup_j \Lambda_{i,j}$.  

The family $(\Lambda_i)_i$ is the \emph{spectral decomposition} of $\Lambda$ and each of its elements is a \emph{basic set}.

Let us show that each basic set $\Lambda_i$ is transitive: for all $x,y\in \Lambda_i$, for all neighborhoods $U$ of $x$ and $V$ of $y$ for the topology of $K$, there exists $n>0$ such that $f^n(U)\cap V$ is nonempty. By density of the periodic points, we only need to show this when $x$ and $y$ are periodic. By replacing $y$ by some of its iterates, we may suppose that $x$ and $y$ belong to a same $\Lambda_{i,j}$. Let $q\in W_K^u(x)\cap W_K^s(y)$. For every sufficiently large multiple $n $ of the period of $x$ and $y$, $f^{-n}(q)$ belongs to $U$ and $f^{n}(q)$ belongs to $V$. Thus $f^{2n}(U)$ intersects $V$.

As each basic set $\Lambda_i$ is hyperbolic and in the closure of the periodic orbits, the stable set of $\Lambda_i$:
\[W^s(\Lambda_i):=\{x\in M:\; d(f^n(x),\Lambda_i)\rightarrow 0,\quad \mathrm{when}\; x\rightarrow +\infty\}\]
is equal to the union of manifolds $\cup_{x\in \Lambda_i} W^s(x)$.
The reader should look at \cite{Sh}, Prop 9.1 for a proof. 

Intersecting with $K$, we get: 
\[W^s_i:=\{x\in K:\; d(f^n(x),\Lambda_i)\rightarrow 0,\quad \mathrm{when}\; x\rightarrow +\infty\}=\bigcup_{x\in \Lambda_i} W^s_K(x).\]
Similarly:
\[W^u_i:=\{x\in K:\; d(f^n(x),\Lambda_i)\rightarrow 0,\quad \mathrm{when}\; x\rightarrow -\infty\}=\bigcup_{x\in \Lambda_i} W^u_K(x).\]

Let us write $\Lambda_i\succ \Lambda_j$ if $W^u_i\setminus \Lambda_i$ intersects $W^s_j\setminus \Lambda_j$. 


\begin{prop} If $\Lambda_i\succ \Lambda_j$ then there exist two periodic points $p_i\in\Lambda_i$ and $p_j\in\Lambda_j$ such that $W^u(p_i)$ intersects $ W^s(p_j)$.\end{prop}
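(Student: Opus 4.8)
The plan is to unwind the definitions of $W^u_i$ and $W^s_j$ and use the transitivity of the basic sets together with the inclination (lambda) lemma. Suppose $\Lambda_i\succ\Lambda_j$, so there is a point $q\in (W^u_i\setminus\Lambda_i)\cap(W^s_j\setminus\Lambda_j)$. By the characterizations $W^u_i=\bigcup_{x\in\Lambda_i}W^u_K(x)$ and $W^s_j=\bigcup_{y\in\Lambda_j}W^s_K(y)$ established just above, there exist $x\in\Lambda_i$ and $y\in\Lambda_j$ with $f^{-n}(q)\to x$ and $f^n(q)\to y$ as $n\to+\infty$. Since periodic points are dense in $\Lambda_i$ and in $\Lambda_j$, and since $\Lambda_i,\Lambda_j$ are transitive, one can move $x$ and $y$ to periodic points without destroying the connection: this is the key geometric step.

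The main step is as follows. Fix a small $\epsilon$ as in the $AS$ definition. Pick a periodic point $p_i\in\Lambda_i$ whose orbit passes $\epsilon$-close to $x$, and a periodic point $p_j\in\Lambda_j$ whose orbit passes $\epsilon$-close to $y$. Because $\Lambda_i$ is hyperbolic, $x$ has a well-defined local unstable manifold $W^u_\epsilon(x)$, and by the inclination lemma applied to the periodic orbit of $p_i$, some forward iterate of $W^u_\epsilon(p_i)$ accumulates (in the $C^1$ sense, on compact pieces) on $W^u_\epsilon(x)$; in particular $W^u(p_i)$ comes $C^1$-close to $W^u_\epsilon(x)$ near $x$. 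Symmetrically, backward iterates of $W^s_\epsilon(p_j)$ accumulate on $W^s_\epsilon(y)$, so $W^s(p_j)$ comes $C^1$-close to $W^s_\epsilon(y)$ near $y$. Now the orbit of $q$ links a point near $x$ to a point near $y$; choosing an iterate $q'=f^m(q)$ of $q$, the local unstable manifold $W^u_\epsilon(q')$ is $C^1$-close to $W^u_\epsilon(x)$ and hence meets the nearby piece of $W^u(p_i)$, while a further forward iterate of $q'$ has local stable manifold close to $W^s_\epsilon(y)$ and hence meets $W^s(p_j)$. Transporting by the dynamics, $W^u(p_i)$ and $W^s(p_j)$ both pass through (pieces $C^1$-close to) the orbit of $q$, and by the transversality hypothesis $(ii)$ these two immersed submanifolds must actually intersect. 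Finally, tracking the intersection point forward and backward shows it lies on $W^u(p_i)$ and $W^s(p_j)$ respectively, giving the claim. (Essentially this is the same mechanism used above to prove transitivity of $\sim$.)

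Concretely I would organize it: (1) extract $x\in\Lambda_i$, $y\in\Lambda_j$ and an orbit segment of $q$ connecting a neighborhood of $x$ to a neighborhood of $y$; (2) approximate $x$ by a periodic $p_i$ and $y$ by a periodic $p_j$, using density of periodic points and transitivity of the basic sets to guarantee that the unstable (resp.\ stable) manifold of the periodic point shadows that of $x$ (resp.\ $y$); (3) use the inclination lemma to get $W^u(p_i)$ $C^1$-close to $W^u_\epsilon(x)$ and $W^s(p_j)$ $C^1$-close to $W^s_\epsilon(y)$ on the relevant compact plaques; (4) use the orbit of $q$ to connect these two local pieces, concluding via hypothesis $(ii)$ of the $AS$ definition that $W^u(p_i)\cap W^s(p_j)\neq\emptyset$, with the intersection landing in $K$.

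The delicate point — and the one I expect to be the main obstacle — is step (2)–(3): making rigorous that one may replace the (possibly non-periodic) endpoints $x\in\Lambda_i$ and $y\in\Lambda_j$ by periodic points \emph{while preserving the heteroclinic connection carried by $q$}. This requires that the local unstable manifold of a nearby periodic point be genuinely $C^1$-close, on a definite-size plaque, to $W^u_\epsilon(x)$; this is standard for hyperbolic sets (continuity of the local invariant manifolds, plus transitivity so the periodic orbit actually visits a neighborhood of $x$ with the right "direction"), but it must be invoked carefully together with the inclination lemma so that the resulting pieces are large enough to meet the local manifolds attached to the orbit of $q$. Everything else is a direct application of hyperbolicity of the $\Lambda_i$ and the transversality-and-containment property $(ii)$, exactly as in the transitivity argument for $\sim$.
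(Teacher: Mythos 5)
Your proposal is correct and rests on the same ingredients as the paper's argument (density of periodic points in the basic sets, the inclination lemma, and hypothesis $(ii)$ to get transversality and containment in $K$), but the paper organizes it in a way that removes exactly the step you flag as the main obstacle. Instead of first accumulating $W^u(p_i)$ onto $W^u_\epsilon(x)$ and $W^s(p_j)$ onto $W^s_\epsilon(y)$ and then trying to connect both through the orbit of $q$ simultaneously, the paper replaces one endpoint at a time: since $q\in K$, the iterate $f^n(q)$ is itself a point of $K$ and therefore carries its own $\epsilon$-local unstable manifold of definite size; for $n$ large this sits next to $f^n(q_j)\in\Lambda_j$, so it meets the $\epsilon$-local stable manifold of a nearby periodic point of $\Lambda_j$ transversally, and by $(ii)$ the intersection point $p'$ lies in $K$. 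Pulling back by $f^n$, one has traded $q_j$ for a periodic $p_j$ while keeping a genuine heteroclinic point in $K$; the same operation at the backward end then trades $q_i$ for a periodic $p_i$. This sequential version needs no accumulation of $W^u(p_i)$ onto a plaque near $q$, only the (standard) fact that $W^u_\epsilon(f^n(q))$ converges to the local unstable manifolds of $\Lambda_j$, and it has the added benefit of producing an intersection point inside $K$, which is what the subsequent no-cycle argument actually uses. One imprecision in your write-up worth correcting if you keep your route: the inclination lemma ``applied to the periodic orbit of $p_i$'' yields accumulation of transverse discs \emph{onto} $W^u(p_i)$, not of $W^u(p_i)$ onto $W^u_\epsilon(x)$; to get the latter you must first use the local product structure of $\Lambda_i$ to make $W^u_\epsilon(p_i)$ a disc transverse to $W^s_\epsilon(f^{-N}(q_i))$ and then apply the inclination lemma along the forward orbit of $f^{-N}(q_i)$.
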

\begin{proof}
Let $q_i\in\Lambda_i$ and $q_j\in\Lambda_j$ be two points such that 
$W^u(q_i)$ intersects $ W^s(q_j)$ at some point $q\in K$. 
Thus for $n>0$ large, the iterate $f^n(q)$ is close to $f^n(q_j)$. Consequently the $\epsilon$-local unstable manifold of $f^n(q)$ intersects the $\epsilon$-local stable manifold of a periodic point $f^n(p_j)\in \Lambda_j$  close to $f^n(q_j)$. Let $f^n(p')$ be this intersection point. It must belong to $K$. Pulling back this construction by $f^n$, we get the existence of an intersection point $p' \in K$ between $W^s(q_i)$ and $W^u(p_j)$. We construct similarly $p_i$.
\end{proof}
\begin{prop}
The relation $\succ$ can be completed to a total order.
\end{prop}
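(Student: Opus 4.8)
The plan is to prove that the relation $\succ$ has \emph{no cycles}; once this is established, its transitive closure is a strict partial order on the finite set of basic sets, and any strict partial order on a finite set extends to a total order (order‑extension principle, i.e. topological sort), which gives the statement. So I would argue by contradiction, assuming there is a cycle $\Lambda_{i_1}\succ\Lambda_{i_2}\succ\cdots\succ\Lambda_{i_k}=\Lambda_{i_1}$ (including the self‑loop case $k=1$), and deriving a contradiction with $\Omega(f|K)=\Lambda=\overline{Per(f|K)}$.

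The first step is to upgrade the cycle to a \emph{transverse heteroclinic cycle through periodic points lying in $K$}. For each step $\Lambda_{i_j}\succ\Lambda_{i_{j+1}}$ the previous Proposition gives periodic points $a_j\in\Lambda_{i_j}$ and $b_j\in\Lambda_{i_{j+1}}$ with $W^u(a_j)$ meeting $W^s(b_j)$ at a point of $K$; and since $b_j,a_{j+1}$ are periodic points of the same basic set $\Lambda_{i_{j+1}}$ they are $\sim$-equivalent, so $W^u_K(b_j)\cap W^s_K(a_{j+1})\neq\emptyset$ too (and similarly $b_{k-1}\sim a_1$ closes the chain). All these intersections are transverse by Property $(ii)$ of the $AS$ definition, and all the connecting orbits lie in $K$. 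I would then observe that every periodic point in this cycle has the same index (dimension of the unstable manifold): within a basic set the index is constant (two $\sim$-related points have stable manifolds of equal dimension), and a transverse, non-empty intersection $W^u(a_j)\cap W^s(b_j)$ forces $\dim W^u(a_j)+\dim W^s(b_j)\ge\dim M$, i.e. $\dim W^u(a_j)\ge\dim W^u(b_j)$; reading this around the cycle makes all the indices coincide.

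The contradiction then comes from a Birkhoff--Smale type argument adapted so as to remain inside $K$. Fix a connecting point $q$ of the cycle, realizing say $\Lambda_i\succ\Lambda_{i+1}$, so that $q\in K\setminus(\Lambda_i\cup\Lambda_{i+1})$ with $f^m(q)\to\Lambda_{i+1}$ and $f^{-m}(q)\to\Lambda_i$. Composing the transverse connections once around the cycle by means of the $\lambda$-lemma, and using Property $(ii)$ at each composition to keep the successive intersection points of the $\epsilon$-local stable and unstable manifolds inside $K$, one produces for every $\delta>0$ a point $w_\delta\in K$ with $d(w_\delta,q)<\delta$ and $d(f^{T_\delta}(w_\delta),q)<\delta$ for some $T_\delta>0$ (geometrically $w_\delta$ shadows the heteroclinic cycle once and returns near $q$ — the classical horseshoe construction attached to a transverse heteroclinic cycle of equal‑index periodic points). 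Hence $q$ is nonwandering for $f|K$, so $q\in\Omega(f|K)=\Lambda$, say $q\in\Lambda_\ell$. But then $f^m(q)\to\Lambda_{i+1}$ forces $\Lambda_\ell$ to accumulate on $\Lambda_{i+1}$, and $f^{-m}(q)\to\Lambda_i$ forces $\Lambda_\ell$ to accumulate on $\Lambda_i$; since the finitely many basic sets are $\delta$-separated this yields $\Lambda_\ell=\Lambda_i=\Lambda_{i+1}$, whence $q\in\Lambda_i$, contradicting $q\notin\Lambda_i$. Thus $\succ$ has no cycle.

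I expect the main obstacle to be precisely this last step: the horseshoe near a heteroclinic cycle is classically built using the Shadowing Lemma in the ambient manifold $M$, whereas here $K$ is not assumed locally maximal, so one cannot appeal to shadowing in $M$ directly. The substitute is to build the returning orbits by hand as iterated intersections of $\epsilon$-local stable and unstable manifolds of points of $K$ along the cycle; these stay in $K$ exactly because of Property $(ii)$, which is why the $AS$ hypothesis (and not merely hyperbolicity of $\Omega(f|K)$) is used here.
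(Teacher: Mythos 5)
Your proposal is correct and follows essentially the same route as the paper: reduce to the no-cycle condition, upgrade the cycle to a chain of periodic points $p_i$ with $W^u_K(p_i)\cap W^s_K(p_{i+1})\neq\emptyset$ via the previous Proposition and the transitivity argument for $\sim$, then show the connecting points are nonwandering for $f|K$ (using the lambda lemma together with Property $(ii)$ to keep the iterated intersections inside $K$, exactly as you note, rather than shadowing in $M$), so that they lie in $\Lambda$ and force the pieces of the cycle to coincide. The extra index-monotonicity observation and the phrasing as a contradiction rather than as degeneracy of the cycle are only cosmetic differences.
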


\begin{proof}
This Proposition is equivalent to the no-cycle condition: if $\Lambda_{j+1}\succeq \Lambda_j$ for $j\in \{1,\dots , n\}$ with $\Lambda_{n+1}=\Lambda_1$, then 
$\Lambda_j=\Lambda_{j+1}$ for every $j\in \{1,\dots, n\}$.
Let us construct by induction on $i$, a periodic point $p_i\in \Lambda_i$ such that $W^u_K(p_i)$ intersects $W^s_K(p_{i+1})$ at a point $q_i\in K$ and for $i=n$,  $f^k(p_{n+1})\sim p_1$ for some $k$.

The existence of $p_1$ and $p_2$ follows from the last Proposition. Let us suppose $(p_l)_{l\le i}$ constructed. There exist periodic points 
$p_i'\in \Lambda_i$ and 
$p_{i+1}\in \Lambda_{i+1}$ such that $W^u_K(p_i')$ intersects $W^s_K(p_{i+1})$. By replacing the points $p_i'$ and $p_{i+1}$ by their image by some iterate $f^k$, we may assume that $p_i\sim p_i'$. Thus, by proceeding as for the proof of the transitivity of $\sim$, we get that $W^u_K(p_i)$ intersects $W^s_K(p_{i+1})$. The last statement for $i=n$ is clear.

One easily shows that each point $q_i\in W^u_K(p_i)\cap W^s_K(p_{i+1})$ is nonwandering and so belongs to $\Lambda$. 
Looking at the backward and forward orbit of $q_i$, this point belongs to a basic set which is arbitrarily close to $\Lambda_i$ and $\Lambda_{i+1}$. So indeed the basic pieces $(\Lambda_i)_{i=1}^n$ coincide.  
\end{proof}

 We renumber the sets $(\Lambda_i)_i$ according to this total order, so $\Lambda_i\prec \Lambda_j$ only if $i< j$. 
By the above Proposition, we note that if $i\le j$ then $W^u _i$ does not intersect $W^s_j$. However $W^s_i$ does not necessarily intersect $W^u_j$.   
 
\begin{theo} Let $f$ be a homeomorphism of a metric space which leaves invariant a compact subset $K$. Let $\Lambda= \Lambda_1\sqcup \cdots \sqcup \Lambda_N$ be a disjoint union of invariant closed subsets which contains the ($\omega$ and $\alpha$)-limit set of $f|K$. If for every $i< j$, $W^u_i\cap W^s_j=\emptyset$ then 
 there exists of a sequence of closed subsets of $K$:
 \[\emptyset=K_{0}\subset K_1\subset \cdots \subset K_{N}=K\]
  such that for all $i\ge 1$:

\begin{enumerate}
\item  $f(K_i)\subset \ring K_i$, with $\ring K_i$ the interior of $K_i$ for the topology of $K$ induced by $M$,  
\item $\Lambda_i=\bigcap_{n\in \mathbb Z}f^n(K_i\setminus K_{i-1})$.
\end{enumerate}
\end{theo}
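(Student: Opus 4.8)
The plan is to build the filtration $K_0\subset K_1\subset\cdots\subset K_N$ inductively, producing at stage $i$ a compact set $K_i$ that "captures" the basic pieces $\Lambda_1,\dots,\Lambda_i$ and all the dynamics that eventually falls into them. The natural candidate is a thickening of $W^s_1\cup\cdots\cup W^s_i$: intuitively a point of $K$ should be put into $K_i$ precisely when its forward orbit accumulates only on $\Lambda_1,\dots,\Lambda_i$. So I would set, informally, $K_i$ to be the closure (in $K$) of $\{x\in K:\ \omega(x)\subset\Lambda_1\cup\cdots\cup\Lambda_i\}$, or rather engineer a slightly fattened version of this so that the strict-inclusion property $f(K_i)\subset\ring K_i$ holds. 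The hypothesis $W^u_j\cap W^s_k=\emptyset$ for $j>k$ — equivalently the no-cycle condition together with the total ordering already established — is exactly what makes this forward-invariant under $f$ up to a definite shrinkage.

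First I would fix, for each $i$, a small closed neighborhood of $\Lambda_i$ in $K$ and, using hyperbolicity only implicitly (we are told only that $\Lambda$ contains the limit sets, so I should work with the abstract $\omega/\alpha$-limit statement), choose nested "trapping" regions. Concretely, since the $\Lambda_i$ are disjoint compact invariant sets and $f|K$ is a homeomorphism of a compact metric space, for any $\eta>0$ one can find open sets $U_i\supset\Lambda_i$ with pairwise disjoint closures and with $\overline{\bigcup_n f^{-n}(U_i)}$ controlled; the point is that $\bigcup_{n\ge 0}f^{n}(x)$ for $x\notin\bigcup_j W^s_j$ cannot exist because $\omega(x)$ meets some $\Lambda_j$, so every orbit eventually enters $\bigcup_i U_i$. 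Then $K_i$ is defined as the set of points all of whose forward iterates, after some time, stay in $U_1\cup\cdots\cup U_i$; more carefully, $K_i:=\{x:\ \exists m\ge 0,\ f^{n}(x)\in U_1\cup\cdots\cup U_i\ \forall n\ge m\}$, with $U_\bullet$ chosen so small (depending on the no-cycle ordering) that this set is closed and satisfies $f(K_i)\subset\ring K_i$. Property (1) is then essentially built in: if $x\in K_i$ witnessed by time $m$, then $f(x)$ is witnessed by time $m-1$, and a compactness/openness argument upgrades "$f(x)\in K_i$" to "$f(x)\in\ring K_i$" because near $f(x)$ the same $U$'s still trap the orbit.

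For property (2), $\Lambda_i=\bigcap_{n\in\mathbb Z}f^n(K_i\setminus K_{i-1})$, one inclusion is immediate: $\Lambda_i$ is invariant, lies in $U_i$ (so in $K_i$) but its points do not have forward orbits eventually inside $U_1\cup\cdots\cup U_{i-1}$ (they stay in $U_i$ forever), so $\Lambda_i\subset K_i\setminus K_{i-1}$ and, being invariant, lies in the intersection. Conversely, a point $y$ in $\bigcap_n f^n(K_i\setminus K_{i-1})$ has both $\omega(y)$ and $\alpha(y)$ contained in $\Lambda_1\cup\cdots\cup\Lambda_i$ but, because $y\notin K_{i-1}$ for all iterates, $\omega(y)$ must actually meet $\Lambda_i$; symmetrically $\alpha(y)$ meets some $\Lambda_k$ with $k\le i$; if $k<i$ then $y\in W^u_k\cap W^s_i$, contradicting the hypothesis $W^u_k\cap W^s_i=\emptyset$; hence $\alpha(y)\subset\Lambda_i$ as well, and then standard no-cycle/attractor-repeller reasoning (or: $y$ is chain-recurrent within the single piece $\Lambda_i$) forces $y\in\Lambda_i$.

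The main obstacle, and where I expect to spend the real work, is the precise choice of the neighborhoods $U_i$ making $K_i$ simultaneously \emph{closed}, \emph{strictly forward-invariant} ($f(K_i)\subset\ring K_i$ rather than merely $f(K_i)\subset K_i$), and \emph{nested} ($K_{i-1}\subset K_i$). The strict inclusion is the delicate part: it is the discrete analogue of building a Lyapunov function for the no-cycle filtration, and it requires choosing the $U_i$ adapted to the ordering $\prec$ so that transit between pieces is one-directional, then enlarging slightly to gain the open margin. I would handle this by first producing, via the no-cycle condition, a genuine (possibly only upper/lower semicontinuous) Lyapunov-type function $\varphi:K\to\mathbb R$ with $\varphi\circ f<\varphi$ off $\Lambda$ and $\varphi$ constant $=c_i$ on $\Lambda_i$ with $c_1<\cdots<c_N$ — this is classical (Conley theory) for a homeomorphism of a compact metric space once $W^u_j\cap W^s_k=\emptyset$ for $j>k$ — and then defining $K_i:=\{\varphi\le c_i+\tfrac12(c_{i+1}-c_i)\}\cap K$, which automatically gives (1) and makes (2) a routine limit-set chase as above.
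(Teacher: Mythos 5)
The paper does not actually prove this statement: it defers to Theorem 2.3 of Shub's book, remarking only that the manifold hypothesis there is inessential. So your sketch should be judged on its own, and while its skeleton (trapping regions ordered by the no-cycle relation, or equivalently a Lyapunov function for the decomposition) is the standard and correct route, your primary construction has a concrete flaw. The set $K_i:=\{x:\ \exists m,\ f^n(x)\in U_1\cup\cdots\cup U_i\ \forall n\ge m\}$ cannot be made closed by shrinking the $U_j$: for the north--south map of the circle with sink $\Lambda_1=\{p\}$ and source $\Lambda_2=\{q\}$, this set equals $S^1\setminus\{q\}$ for every choice of $U_1$, and its closure is all of $S^1$, which violates conclusion (2). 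In general your $K_i$ always contains the full basin $W^s_1\cup\cdots\cup W^s_i$, which is typically dense, so no choice of neighborhoods rescues closedness. The correct object is a genuinely truncated trapping neighborhood of the compact set $A_i:=\bigcup_{j\le i}W^u_j$ (e.g.\ $K_i=cl\big(\bigcup_{n\ge0}f^n(V)\big)$ for a suitable precompact $V\supset A_i$ with $f(cl(V))\subset V$), and the compactness of $A_i$ together with the existence of such a $V$ is precisely where the hypothesis $W^u_j\cap W^s_k=\emptyset$ ($j<k$) enters; none of this is addressed. Your fallback via a complete Lyapunov function $\varphi$ does work and yields (1) and (2) cleanly by the level-set argument you describe, but constructing $\varphi$ adapted to the given decomposition is essentially equivalent to constructing the filtration itself, so ``this is classical (Conley theory)'' defers the entire content of the theorem rather than proving it.

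A second, smaller point: your final step ``$\alpha(y),\omega(y)\subset\Lambda_i$ \dots forces $y\in\Lambda_i$'' is exactly where 1-cycles bite. The stated hypothesis only excludes $W^u_i\cap W^s_j$ for $i<j$ strictly; a point $y\in(W^u_i\setminus\Lambda_i)\cap(W^s_i\setminus\Lambda_i)$ (homoclinic to the piece $\Lambda_i$) would have its whole orbit in $K_i\setminus K_{i-1}$ for any filtration satisfying (1), so conclusion (2) requires the full no-cycle condition including $i=j$. This is admittedly an imprecision already present in the theorem's statement (in the paper's application the $\Lambda_i$ are the basic pieces, so such homoclinic points are nonwandering and lie in $\Lambda$), but your ``standard no-cycle reasoning'' silently uses it, and you should say so explicitly since it is not among the hypotheses you were given.
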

Such a sequence $(K_i)_i$ is called a \emph{filtration adapted to} $(\Lambda_i)_i$. 

For a proof the reader should read Theorem 2.3 of \cite{Sh}.  Actually the cited Theorem asks $K$ to be a manifold, but this fact is useful uniquely to choose $(K_i)_i$ among the submanifolds with boundary. This is not asked here.

\begin{Property}\label{propfiltration}
For every $i\in \{0,\dots , N\}$, the following equality holds : $\cap_{n\ge0} f^n (K_i)= \cup_{j\le i} W^u_j$.
\end{Property}
\begin{proof}
The compact set $\cap_{n\ge 0} f^n (K_i)$ contains its $f$-preimage and so its backward limit set is included in $\sqcup_{j\le i} \Lambda_j$. Thus  $\cap_{n\ge0} f^n (K_i)$ is included in $\cup_{j\le i} W^u_j$.

 On the other hand, $\cap_{n \ge 0} f^n(K_i)$ contains $\sqcup_{j\le i} W^u_\epsilon(\Lambda_j)\cap K$ and by $f^{-1}$-stability it contains $\cup_{j\le i} W^u_j$. \end{proof} 
 
Let us recall a last classical definition: 

A \emph{fundamental domain} for the unstable manifolds of a hyperbolic set $\Lambda$, is a closed subset $\Delta\subset W^u(\Lambda)$ such that
\[\bigcup_{n\in \mathbb Z} f^n(D)\supset W^u(\Lambda)\setminus \Lambda.\]
The fundamental domain $D$ is \emph{proper} if moreover $\Delta\cap \Lambda$ is empty.

Let us recall the

\begin{lemm}[\cite{HPSP} Lemma 3.2]\label{HPSP}
Let $\Lambda$ be a hyperbolic set for $f$ such that $cl\big(Per(f)\cap \Lambda\big)=\Lambda$. For every $\delta$ small enough:
\[\Delta:= cl\Big(W_\delta^u(\Lambda)\setminus  f^{-1}\big(W_\delta^u(\Lambda)\big)\Big)\]
is a proper fundamental domain for $W^u(\Lambda)$.

\end{lemm}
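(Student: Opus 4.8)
The plan is to recall the standard picture for fundamental domains of hyperbolic sets and to verify the two required properties: that $\Delta$ covers $W^u(\Lambda)\setminus\Lambda$ under iteration, and that $\Delta\cap\Lambda=\emptyset$ (properness).

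First I would set up the local picture. Since $\Lambda$ is hyperbolic and equals $cl(Per(f)\cap\Lambda)$, for $\delta$ small enough the $\delta$-local unstable manifolds $W^u_\delta(x)$ vary continuously with $x\in\Lambda$, are embedded discs, and satisfy the usual invariance/expansion relation $f^{-1}\big(W^u_\delta(f(x))\big)\subset W^u_{\lambda\delta}(x)$ for some $\lambda<1$ (after adapting the metric, exactly as in Fact~\ref{factpedago} for the stable side). Hence $W^u_\delta(\Lambda):=\cup_{x\in\Lambda}W^u_\delta(x)$ satisfies $W^u_\delta(\Lambda)\subset f\big(W^u_\delta(\Lambda)\big)$, equivalently $f^{-1}\big(W^u_\delta(\Lambda)\big)\subset W^u_\delta(\Lambda)$, so $f^{-1}\big(W^u_\delta(\Lambda)\big)$ is a genuine (relatively open) subset of $W^u_\delta(\Lambda)$ and $\Delta$ as defined is a nonempty closed set.

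Next, the covering property. Take $y\in W^u(\Lambda)\setminus\Lambda$, so $y\in W^u(x)$ for some $x\in\Lambda$ with $y\neq x$ along the unstable leaf. As $n\to+\infty$ the backward iterates $f^{-n}(y)$ approach $\Lambda$ along the unstable manifold, hence for $n$ large enough $f^{-n}(y)\in W^u_\delta(\Lambda)$; on the other hand, since $y\notin\Lambda$, the forward iterates eventually leave the $\delta$-local unstable neighborhood (the unstable leaf through $y$ has definite internal length from $x$, and expansion pushes $f^{-n}(y)$ off $W^u_\delta(\Lambda)$ for $n$ very negative, i.e.\ $f^m(y)\notin W^u_\delta(\Lambda)$ for $m$ large). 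Therefore there is a unique integer $k$ with $f^{-k}(y)\in W^u_\delta(\Lambda)$ but $f^{-k-1}(y)=f^{-1}(f^{-k}(y))\notin W^u_\delta(\Lambda)$; equivalently $f^{-k}(y)\in W^u_\delta(\Lambda)\setminus f^{-1}\big(W^u_\delta(\Lambda)\big)\subset\Delta$. This gives $y\in f^k(\Delta)$, proving $\cup_{n\in\mathbb Z}f^n(\Delta)\supset W^u(\Lambda)\setminus\Lambda$.

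Finally, properness: I must show $\Delta\cap\Lambda=\emptyset$. Suppose $p\in\Delta\cap\Lambda$; then $p\in cl\big(W^u_\delta(\Lambda)\setminus f^{-1}(W^u_\delta(\Lambda))\big)$, so there are $p_m\to p$ with $p_m\in W^u_\delta(\Lambda)\setminus f^{-1}(W^u_\delta(\Lambda))$. Using $f^{-1}\big(W^u_\delta(\Lambda)\big)\supset$ a full relative neighborhood in $W^u_\delta(\Lambda)$ of each point of $\Lambda$ — which holds because $f^{-1}\big(W^u_\delta(f(x))\big)\subset W^u_{\lambda\delta}(x)\subset \ring W^u_\delta(x)$ and these interiors, taken over $x\in\Lambda$, cover a neighborhood of $\Lambda$ in $W^u_\delta(\Lambda)$ by continuity and compactness — we get that $p\in f^{-1}\big(W^u_\delta(\Lambda)\big)$ together with a relative neighborhood, contradicting $p_m\notin f^{-1}(W^u_\delta(\Lambda))$ for $p_m$ close to $p$. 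Hence $\Delta\cap\Lambda=\emptyset$. The main obstacle is this last step: making precise, uniformly over $\Lambda$, that $f^{-1}(W^u_\delta(\Lambda))$ contains a uniform relative neighborhood of $\Lambda$ inside $W^u_\delta(\Lambda)$, which is where the density of periodic points (or rather just local maximality/continuity of the local unstable lamination) and the adapted metric of Fact~\ref{factpedago} are used; once that uniformity is in hand, both the covering and the properness are routine. Since this is precisely Lemma~3.2 of \cite{HPSP}, I would in the write-up simply cite it, but the sketch above is the argument behind it.
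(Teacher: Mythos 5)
The paper itself gives no proof here: the lemma is recalled verbatim and attributed to \cite{HPSP} (Lemma 3.2), so ending with a citation, as you do, is exactly what the paper does; the review therefore concerns your sketch. Its overall scheme (backward iterates eventually enter $W^u_\delta(\Lambda)$, pick the exit time, then check $\Delta\cap\Lambda=\emptyset$) is the standard one, but two steps are flawed as written. First, the covering step: having established $f^{-1}\big(W^u_\delta(\Lambda)\big)\subset W^u_\delta(\Lambda)$, the integer $k$ you describe, with $f^{-k}(y)\in W^u_\delta(\Lambda)$ and $f^{-k-1}(y)\notin W^u_\delta(\Lambda)$, can never exist: backward invariance forces $f^{-k-1}(y)\in W^u_\delta(\Lambda)$ as soon as $f^{-k}(y)$ is there, so your ``equivalently'' is false. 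What you want is the \emph{minimal} $k$ with $f^{-k}(y)\in W^u_\delta(\Lambda)$; then $f\big(f^{-k}(y)\big)=f^{-k+1}(y)\notin W^u_\delta(\Lambda)$, i.e.\ $f^{-k}(y)\in W^u_\delta(\Lambda)\setminus f^{-1}\big(W^u_\delta(\Lambda)\big)$. Moreover such a minimal $k$ exists only if not \emph{every} iterate of $y$ lies in $W^u_\delta(\Lambda)$, and your justification (``expansion pushes $f^m(y)$ off $W^u_\delta(\Lambda)$ for $m$ large'') is not valid: $W^u_\delta(\Lambda)$ is the union over \emph{all} of $\Lambda$, and forward iterates of $y$ may re-enter local unstable manifolds of other points of $\Lambda$ infinitely often (homoclinic folding). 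The correct argument is: if $f^m(y)\in W^u_\delta(x_m)$ with $x_m\in\Lambda$ for all $m\ge 0$, then $d\big(y,f^{-m}(x_m)\big)\le\lambda^m\delta$ with $f^{-m}(x_m)\in\Lambda$, hence $y\in\Lambda$ since $\Lambda$ is closed and invariant; so $y\notin\Lambda$ guarantees that some forward iterate escapes.

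Second, properness. You need a \emph{lower} bound on $f^{-1}\big(W^u_\delta(\Lambda)\big)$ near $\Lambda$, but the inclusion you invoke, $f^{-1}\big(W^u_\delta(f(x))\big)\subset W^u_{\lambda\delta}(x)$, bounds that set from \emph{above} and gives nothing in the required direction. What is needed is (i) $W^u_{\mu\delta}(\Lambda)\subset f^{-1}\big(W^u_\delta(\Lambda)\big)$ for a suitable $\mu<1$ (easy, from a Lipschitz bound on $f$ and the definition of the local unstable manifolds), and, much more seriously, (ii) that every point of $W^u_\delta(\Lambda)$ lying in a small ambient neighborhood of $\Lambda$ actually belongs to $W^u_{\mu\delta}(\Lambda)$ --- that ambient closeness to $\Lambda$ forces small \emph{leafwise} distance to a leaf based at a point of $\Lambda$. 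Point (ii) is precisely where the hypothesis $cl\big(Per(f)\cap\Lambda\big)=\Lambda$ (through the local product-type structure it yields) is used in \cite{HPSP}, and your sketch, which itself flags this uniformity as ``the main obstacle'', does not supply an argument for it. So the sketch is fine as motivation, but as a proof it has a genuine gap exactly at the step the hypothesis is there for; keeping the citation, as the paper does, is the right call.
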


\subsection{Stratified structures on a local stable set of $K$}

We are going to endow a local stable set of $K$ with a structure of stratification of laminations, and then with a structure of trellis of laminations. The main steps of this construction are illustrated in figure \ref{constructionfinal}.
\begin{figure}
        \includegraphics{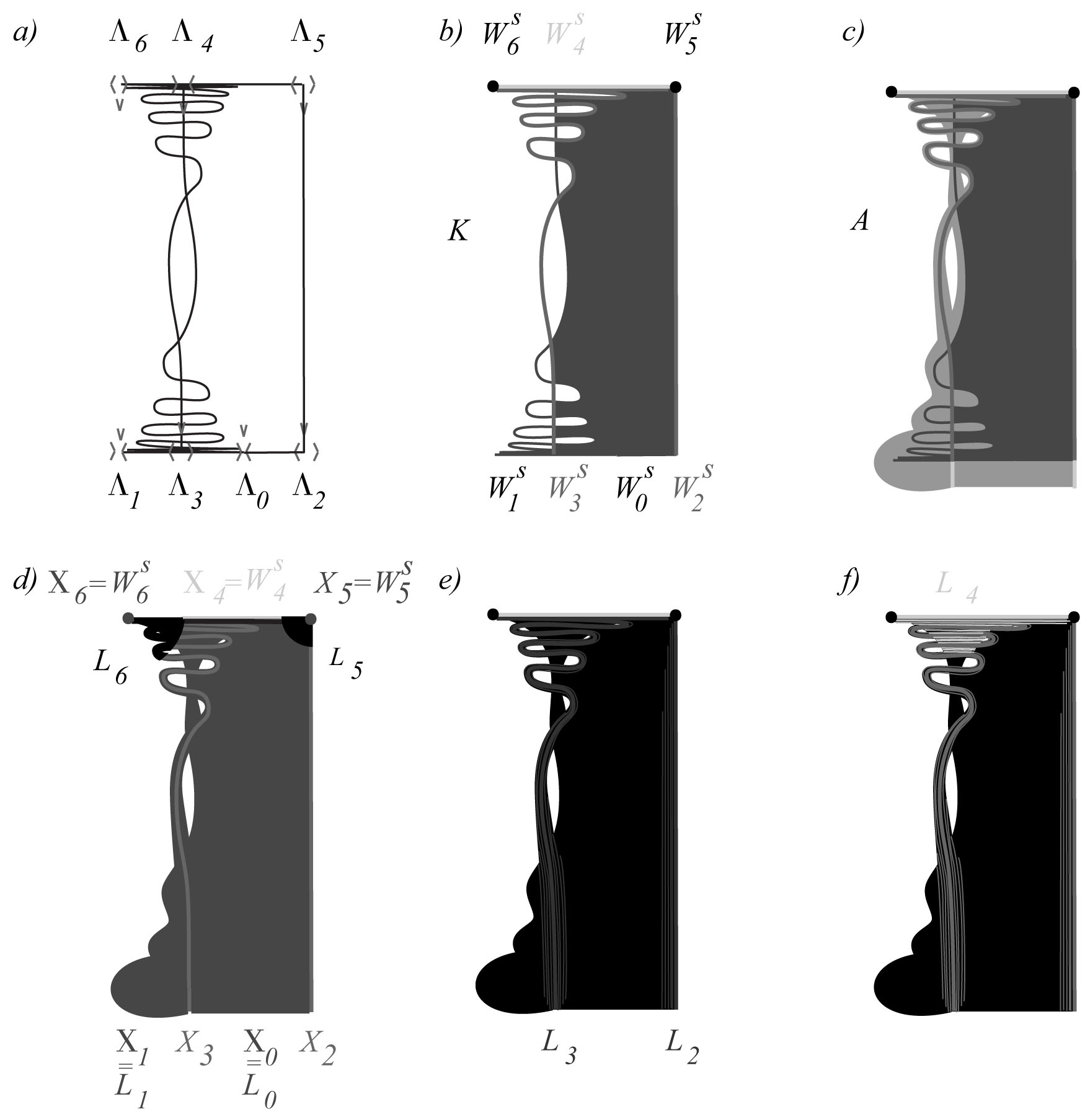}
        
    \caption{{\small $a)$ Morse-Smale Dynamics of the plan whose nonwandering set is equal to seven fixed points (2 sinks $\Lambda_0$ and $\Lambda_1$, 3 saddles $\Lambda_2$, $\Lambda_3$ and $\Lambda_4$, and also 2 sources $\Lambda_5$ and $\Lambda_6$). 
    $b)$  The compact set $K$ is equal to the union of $\Lambda_0$, $\Lambda_1$, $\Lambda_2$, $\Lambda_5$, $\Lambda_6$, $W^s(\Lambda_4)$, $W^u(\Lambda_3)$; a half stable manifold of $\Lambda_3$ and half unstable manifolds of $\Lambda_2$ and $\Lambda_4$, and also the domain bounded by these five points and curves. $c)$ Local stable set $A$ of $K$. $d)$ Stratification of laminations  $\Sigma$ on $A$ and the tubular neighborhoods of the strata of dimension different to one. $e)$ Tubular neighborhoods of the highest one dimensional strata. $f)$ Tubular neighborhood of a lower one dimensional stratum.}}
    \label{constructionfinal}
\end{figure}

Following J. Mather \cite{Ma}, a \emph{stratified space} is a metric space $A$ equipped with a finite partition $\Sigma$ of $A$ into locally closed subsets, satisfying
the \emph{axiom of the frontier}:
  \[\forall (X,Y)\in \Sigma^2,\; cl(X)\cap Y\not=\emptyset\Rightarrow cl(X)\supset Y.\qquad \mathrm{We\; write\; then}\; X\ge Y.\]
The pair $(A,\Sigma)$ is called \emph{stratified space} with \emph{support} $A$ and \emph{stratification} $\Sigma$.\\

Let us illustrate this definition by a useful Proposition illustrated by figure \ref{constructionfinal}. b), 
\begin{prop}\label{prop1} Let $f$ be a diffeomorphism of a manifold (of any dimension). 
Let $K$ be an $AS$ compact subset of $M$. Let $(\Lambda_i)_i$ be the basic set of $f|K$ and $W^s_i:=W^s(\Lambda_i)\cap K$. Then the family of sets $(W^s_i)_i$ forms a stratification on $K$.

Moreover, $W^s_i\ge W_j^s$ implies $\Lambda_i\preceq \Lambda_j$ and so $i\le j$.   
  
\end{prop}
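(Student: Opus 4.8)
I would treat the three assertions in turn: that $(W^s_i)_i$ is a finite partition of $K$ into locally closed sets, the axiom of the frontier, and $W^s_i\ge W^s_j\Rightarrow\Lambda_i\preceq\Lambda_j$. The plan is to extract the first and the order statement directly from the filtration theorem above, and to reduce the axiom of the frontier to a single ``local stable saturation'' property coming from Property $(ii)$. So, first, apply that filtration theorem to $\Lambda=\bigsqcup_l\Lambda_l$ — its hypothesis that $W^u_i\cap W^s_j=\emptyset$ for $i<j$ holds after the renumbering (noted above) — to get $\emptyset=K_0\subset\cdots\subset K_N=K$ with $f(K_l)\subset\ring K_l$ and $\Lambda_l=\bigcap_{n\in\mathbb Z}f^n(K_l\setminus K_{l-1})$. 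I would then set $U_l:=\bigcup_{n\ge0}f^{-n}(K_l)$; since $f(K_l)\subset\ring K_l$, whenever $f^n(x)\in K_l$ the open set $f^{-n-1}(\ring K_l)$ is a neighbourhood of $x$ lying in $U_l$, so each $U_l$ is \emph{open}, and $\emptyset=U_0\subset\cdots\subset U_N=K$. The crucial identity is $W^s_i=U_i\setminus U_{i-1}$: if $f^n(x)\in K_i$ eventually but never in $K_{i-1}$, then $f(K_{i-1})\subset\ring K_{i-1}$ keeps $\omega(x)$ off $K_{i-1}$, so $\omega(x)$ is a nonempty invariant subset of $K_i\setminus K_{i-1}$, hence of $\Lambda_i$, and $x\in W^s(\Lambda_i)\cap K=W^s_i$; conversely $x\in W^s_i$ gives $f^n(x)\to\Lambda_i\subset\ring K_i$, so $x\in U_i$, while $x\notin U_{i-1}$ because $\omega(x)\subset\Lambda_i$ misses $K_{i-1}$. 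Hence $W^s_i=U_i\cap(K\setminus U_{i-1})$ is locally closed and the $W^s_i$ partition $K$. Moreover $\bigcup_{k\ge i}W^s_k=K\setminus U_{i-1}$ is \emph{closed} and contains $W^s_i$, so $cl(W^s_i)\subset\bigcup_{k\ge i}W^s_k$, and therefore $cl(W^s_i)\cap W^s_j\neq\emptyset$ forces $j\ge i$, which by the choice of numbering is precisely $\Lambda_i\preceq\Lambda_j$.

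For the axiom of the frontier, suppose $cl(W^s_i)\cap W^s_j\neq\emptyset$. As $cl(W^s_i)$ is closed and $f$-invariant and every point of $W^s_j$ has its $\omega$-limit set in $\Lambda_j$, it is enough to handle $cl(W^s_i)\cap\Lambda_j\neq\emptyset$. The engine I would establish is: \emph{if $p\in cl(W^s_i)$, then $W^s_\epsilon(p)\cap K\subset cl(W^s_i)$}. To prove it, take $z\in W^s_\epsilon(p)\cap K$ and $p_m\in W^s_i$ with $p_m\to p$. By Property $(ii)$ (applied to the pair $(p,z)$) $W^s_\epsilon(p)$ meets $W^u_\epsilon(z)$ transversally, and $z$ lies in that intersection; using that the local stable manifolds attached to the points of $K$ form a $C^1$-continuous family (their geometry being that of Robinson's unstable disk families, recalled after Fact \ref{factpedago}), for $m$ large $W^s_\epsilon(p_m)$ meets $W^u_\epsilon(z)$ at a point $z_m\to z$; Property $(ii)$ again puts $z_m\in K$, and $z_m\in W^s_\epsilon(p_m)$ with $p_m\in W^s_i$ forces $\omega(z_m)\subset\Lambda_i$, so $z_m\in W^s(\Lambda_i)\cap K=W^s_i$ and $z\in cl(W^s_i)$. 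Granting this, fix $p\in cl(W^s_i)\cap\Lambda_j$; then $W^s_\epsilon(p)\cap\Lambda_j\subset cl(W^s_i)$, and by the canonical coordinates of the basic set $\Lambda_j$ the points $[a,b]:=W^s_\epsilon(a)\cap W^u_\epsilon(b)$, with $a\in W^s_\epsilon(p)\cap\Lambda_j$ and $b\in W^u_\epsilon(p)\cap\Lambda_j$ near $p$, fill a neighbourhood of $p$ in $\Lambda_j$, and each lies in $W^s_\epsilon(a)\cap K\subset cl(W^s_i)$ since $a\in cl(W^s_i)$. Hence $cl(W^s_i)\cap\Lambda_j$ is open in $\Lambda_j$; being also closed, $f$-invariant, nonempty, and $\Lambda_j$ topologically transitive, it must equal $\Lambda_j$. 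Finally, for $q\in W^s_j$ one has $f^n(q)\in W^s_\epsilon(p_n)\cap K$ for some $p_n\in\Lambda_j\subset cl(W^s_i)$ and all large $n$, so $f^n(q)\in cl(W^s_i)$ by the engine and $q\in cl(W^s_i)$ by invariance; thus $W^s_j\subset cl(W^s_i)$, i.e. $W^s_i\ge W^s_j$, and combined with the first paragraph this gives $\Lambda_i\preceq\Lambda_j$, hence $i\le j$.

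The hard part will be the local stable saturation property. It rests on the $C^1$-continuity, over all of $K$ and not merely over $\Lambda$, of the family of local stable manifolds, and on invoking Property $(ii)$ twice — to produce the transverse intersection $W^s_\epsilon(p_m)\cap W^u_\epsilon(z)$ and to keep the point $z_m$ inside $K$. Checking that transversality survives the perturbation $p\rightsquigarrow p_m$ needs some care, because the dimensions of $W^s_\epsilon$ and $W^u_\epsilon$ at a point of $K$ need not be complementary (the stable dimension may jump from one basic set to another); everything else is bookkeeping around the filtration.
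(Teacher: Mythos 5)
Your first paragraph is correct and is a genuinely different (and more self-contained) route than the paper's for that part: the paper declares the finite partition ``obvious'' and obtains the order statement by citing Lemmas 1 and 2, p.~10 of \cite{Sh} to show that $cl(W^s_i)$ meets $W^u_j\setminus\Lambda_j$, then runs a finite chain argument through intermediate strata $W^s_{j_1},W^s_{j_2},\dots$ terminating by the no-cycle condition, whereas you extract everything from the filtration via $W^s_i=U_i\setminus U_{i-1}$ with $U_l=\bigcup_{n\ge0}f^{-n}(K_l)$ open. That identity and the inclusion $cl(W^s_i)\subset K\setminus U_{i-1}=\bigcup_{k\ge i}W^s_k$ check out. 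Your reduction of the frontier axiom to the saturation property ``$p\in cl(W^s_i)\Rightarrow W^s_\epsilon(p)\cap K\subset cl(W^s_i)$'', followed by open-and-closedness in the transitive set $\Lambda_j$ via the local product structure, is likewise a clean alternative to the paper's chain.

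The genuine gap is exactly where you point, and it is not bookkeeping. The family $x\in K\mapsto W^s_\epsilon(x)$ is \emph{not} a $C^1$-continuous family: for $p_m\in W^s_i$ converging to $p\in W^s_j$ with $i<j$, the disk $W^s_\epsilon(p_m)$ has in general strictly larger dimension than $W^s_\epsilon(p)$, so no $C^1$-convergence $W^s_\epsilon(p_m)\to W^s_\epsilon(p)$ can hold; what your engine actually needs is the one-sided statement that $W^s_\epsilon(p_m)$ \emph{contains a submanifold} $C^1$-close to $W^s_\epsilon(p)$ near $z$, so that it still crosses $W^u_\epsilon(z)$ transversally. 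That statement is Lemma \ref{champ de plaque elementaire} combined with Lemma \ref{texnic}; its proof is a substantial construction (an invariant cone field over a fundamental domain of $W^u(\Lambda_j)$, the sublemmas excluding tangencies via Property $(ii)$, and a graph-transform iteration), and, as organized in the paper, it sits inside the proof of Proposition \ref{prop2} and explicitly invokes the frontier condition of $(W^s_k)_k$ --- i.e.\ Proposition \ref{prop1} itself --- as well as Property \ref{propfiltration}. So citing ``Robinson-type continuity'' here either imports a false statement or creates a circularity. To salvage your route you would have to prove the one-sided semicontinuity for the pair $(p_m,p)$ directly from the cone-field argument without the frontier condition, or do what the paper does: place a point $x\in W^s_{j_1}\cap(W^u_j\setminus\Lambda_j)$ in $cl(W^s_i)$ first, and derive $cl(W^s_{j_1})\supset W^s_j$ from the inclination lemma applied along the backward orbit of $x$ together with Property $(ii)$.
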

\begin{proof} The family $(W^s_i)_i$ is obviously a finite partition. Let us show the frontier condition and the relation between $\ge $ and $\preceq$. Let $i,j$ be such that $ cl(W^s_i)$ intersects $W_j^s$.

 If $i$ is not equal to $j$, it follows from Lemmas 1 and 2 P.10 of \cite{Sh} that the closure of $W^s_i$ intersects $ W^u_j\setminus \Lambda_j$. This gives the last statement of the Proposition. Let $x$ be a point which belongs to this intersection. As $(W^s_k)_k$ covers $K$, there exists $j_1$ such that $x$ belongs to $W^s_{j_1}$. By Property $(ii)$ of $AS$ compact set and the lambda lemma, the closure of $W^s_{j_1}$ contains $W^s_{j}$. Moreover, the closure of $W^s_{i}$ intersects $W^s_{j_1}$. And so on, we can continue to construct $(\Lambda_{j_k})_k$. As the family $(\Lambda_i)_i$ is finite and there is no-cycle, the family $(\Lambda_{j_k})_k$ is finite. Thus, we obtain:
\[cl(W^s_i)=cl(W^s_{j_n})\supset \cdots \supset cl(W^s_{j_1})\supset W^s_j.\]

\end{proof}

For persistence purpose, it is useful to endow the strata with a geometric structure. When a hyperbolic set $\Lambda$ is equal to the closure of a periodic points subset, then its has a \emph{local product structure} (see prop. 8.11 of \cite{Sh}) and so $W^s_\epsilon (\Lambda):=\cup_{x\in \Lambda} W^s_\epsilon (x)$ has a structure of lamination whose plaques are local stable manifolds.

A \emph{stratification of laminations} is the data of a laminar structure on each stratum of a stratification, such that for any strata $X\ge Y$, the dimension of $X$ is at least equal to the dimension of $Y$.


Under the hypotheses of Propositon \ref{prop1} with furthermore $K$ equal to the whole manifold $M$, 
(that is $K=M$ and $f$ is $AS$), then 
we have proved in Proposition 1.2.7  of \cite{Bermem} that each stable set $W^s_i$ has a lamination structure whose leaves are the local stable manifolds of $\Lambda_i$´s points. Moreover the laminations $(W^s_i)_i$ form the strata of a stratification of laminations $\Sigma$ on $K=M$.

When $K$ is not $M$, unfortunately $(W^s_i)_i$ is in general not a stratification of laminations. As we see on figure \ref{constructionfinal}.b) the strata $W^s_0$ and $W^s_1$ correspond to the sinks basins, but are not open in $M$ and so cannot have a two dimensional laminar structure. This is why we extend $K$ to the union of the adapted stable manifolds of its points, as illustrated by figure \ref{constructionfinal}.c).
 
\begin{prop} \label{prop2}
Let $f$ be a diffeomorphism of a manifold (of any dimension). 
Let $K$ be an $AS$ compact subset of $M$. 

For every small $\epsilon>0$, there exists a stratification of laminations $\Sigma$ on 
$A:= \cup_{x\in K} \tilde W^s_\epsilon(x)$.  Each stratum $X_k$ of $\Sigma$ is associated to a basic set $\Lambda_k$. The support of $X_k$ is 
\[X_k:=\bigcup_{x\in W^s_k} \tilde W^s_\epsilon(x)=W^s(\Lambda_k)\cap A.\]
 The plaques of $X_k$ are local stable manifolds. 
\end{prop}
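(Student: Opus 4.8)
\medskip
\noindent\textbf{Proof strategy.} The plan is to treat the three ingredients of the statement in turn: the identification of the support of each $X_k$, the stratified structure of $\Sigma=(X_k)_k$, and the laminar structure of each stratum. I first check that the two descriptions of $X_k$ coincide and that $(X_k)_k$ partitions $A$. If $x\in W^s_k$ then $\tilde W^s_\epsilon(x)\subset W^s(x)\subset W^s(\Lambda_k)$ and $\tilde W^s_\epsilon(x)\subset A$ by definition of $A$, so $\bigcup_{x\in W^s_k}\tilde W^s_\epsilon(x)\subset W^s(\Lambda_k)\cap A$; conversely, any $y\in A$ lies in some $\tilde W^s_\epsilon(x)$ with $x\in K$, whence $d(f^n(x),f^n(y))\to 0$, so $x$ and $y$ have the same $\omega$-limit set, and $y\in W^s(\Lambda_k)$ then forces $x\in W^s_k$. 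Disjointness of the $X_k$ comes from disjointness of the $\Lambda_k$, and $\bigcup_k X_k=A$ from the fact that the $W^s_k$ cover $K$ (already used for Proposition \ref{prop1}).

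Next, the stratified structure. Each $X_k$ is locally closed in $A$: with $(K_i)_i$ a filtration adapted to $(\Lambda_i)_i$, a point of $K$ has forward orbit eventually trapped in $K_i$ if and only if its $\omega$-limit lies in some $\Lambda_j$ with $j\le i$, so $\bigcup_{j\le i}X_j=A\cap\bigcup_{n\ge 0}f^{-n}(K_i)$, which is open in $A$ because $f$ sends $K_i$ into its interior; hence $X_i$ is a difference of two sets open in $A$. For the axiom of the frontier I would establish $cl_A(X_i)=\bigcup_{x\in cl_K(W^s_i)}cl(\tilde W^s_\epsilon(x))$: one inclusion uses upper semicontinuity of $x\mapsto cl(\tilde W^s_\epsilon(x))$ (third item of Fact \ref{factpedago}), the other uses lower semicontinuity of $\tilde d$ together with the key feature that, $\tilde d$ being normalized by $\lambda^{-n}$, the adapted local stable manifold $\tilde W^s_\epsilon(x)$ shrinks to a point as $x$ tends to a point whose local stable manifold has strictly smaller dimension --- precisely the reason these adapted manifolds are used. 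Granting this formula, $cl_A(X_i)\cap X_j\ne\emptyset$ implies $cl_K(W^s_i)\cap W^s_j\ne\emptyset$, and Proposition \ref{prop1} then yields $cl_K(W^s_i)\supset W^s_j$ (hence $cl_A(X_i)\supset X_j$) and $i\le j$.

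Finally, the laminar structure on each $X_k$. Because $\Lambda_k$ is hyperbolic and equal to the closure of its periodic points, it has a local product structure, so $W^s_\epsilon(\Lambda_k)=\bigcup_{z\in\Lambda_k}W^s_\epsilon(z)$ is a lamination with plaques $W^s_\epsilon(z)$ and transverse structure carried by local unstable plaques; restricting plaques, this gives a laminar structure on $X_k^0:=X_k\cap W^s_\epsilon(\Lambda_k)$. Then write $X_k=\bigcup_{n\ge 0}f^{-n}(X_k^0)$ --- an increasing union exhausting $X_k$, since each $x\in W^s_k$ lies in $W^s(z)$ for some $z\in\Lambda_k$, so $f^n(x)\in W^s_\epsilon(\Lambda_k)$ for $n$ large --- and transport the structure of $X_k^0$ by the diffeomorphisms $f^{-n}$. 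The dimension condition is then immediate: $\dim X_k$ equals the stable dimension $s_k$ over $\Lambda_k$, and if $X_i\ge X_j$ then $i\le j$ by the previous paragraph and $cl(W^s(\Lambda_i))\supset W^s(\Lambda_j)$ is witnessed by a chain of transverse heteroclinic intersections $W^u(\Lambda_\bullet)\pitchfork W^s(\Lambda_\bullet)$ running from $\Lambda_j$ up to $\Lambda_i$; since $\Lambda$ carries no central direction, transversality makes the stable dimension nondecreasing along this chain, so $s_i\ge s_j$, i.e. $\dim X_i\ge\dim X_j$. Hence $\Sigma=(X_k)_k$ is a stratification of laminations on $A$.

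\medskip
\noindent\textbf{Main obstacle.} The heart of the matter is checking, in the last paragraph, that the local-product lamination near $\Lambda_k$ and its $f^{-n}$-pushovers over the rest of $W^s(\Lambda_k)$ genuinely glue into compatible laminar charts, with transverse structure continuous along the plaques all the way out to the ends of the stable manifolds: by the first item of Fact \ref{factpedago}, $f$ carries the coherent family $\{\tilde W^s_\epsilon(x)\}$ into itself, so plaques produced from different $n$ are pieces of the same local stable manifolds and their holonomies agree, but the continuity of the transverse structure is what must be controlled with care. This is the analogue, for a general $AS$ compact set, of Proposition 1.2.7 of \cite{Bermem} (the case $K=M$), whose argument I would follow.
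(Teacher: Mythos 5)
Your overall architecture matches the paper's: identify the supports, prove the frontier condition via semicontinuity of $x\mapsto\tilde W^s_\epsilon(x)$, and laminate each $X_k$ by pulling back the local product structure near $\Lambda_k$ under $f^{-n}$. But there is a genuine gap, and it sits exactly where the strong transversality hypothesis (condition $(ii)$ of the $AS$ definition) must enter --- a hypothesis your argument never invokes. Your ``key feature'' that $\tilde W^s_\epsilon(x)$ \emph{shrinks to a point} as $x$ tends to a point of a lower-dimensional stratum is false, and if it were true it would destroy, not prove, the frontier condition: $X_j$ is the union of the plaques $\tilde W^s_\epsilon(x)$, $x\in W^s_j$, which are positive-dimensional local stable manifolds (unless $\Lambda_j$ is a source), so $cl_A(X_i)\supset X_j$ requires the adapted manifolds of nearby points of $W^s_i$ to accumulate on the \emph{whole} plaque $\tilde W^s_\epsilon(x)$, not on the point $x$. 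What actually happens (the paper's Lemma \ref{texnic} and Lemma \ref{champ de plaque elementaire}) is that $cl(\tilde W^s_\epsilon(y))$ converges in the Hausdorff topology to $cl(\tilde W^s_\epsilon(x))$: the higher-dimensional disk collapses only in the unstable directions of $\Lambda_j$ while $\tilde W^s_\epsilon(y)$ keeps containing a submanifold $C^1$-close to $\tilde W^s_\epsilon(x)$. Proving this lower semicontinuity is the hard core of the proposition; it uses a cone field around $W^s_\epsilon(\Lambda_j)$, a proper fundamental domain $D$ for $W^u(\Lambda_j)$, the uniform transversality of $W^s_i$ with $W^u_\epsilon(\Lambda_j)$ on $D$ (this is where $(ii)$ is used --- a tangency would make the graph transform fail), and a backward graph-transform/lambda-lemma argument. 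None of this is replaced by the normalization of $\tilde d$ by $\lambda^{-n}$ alone.

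The second, smaller gap is in the laminar structure. You cannot simply ``restrict plaques'' of the lamination $W^s_\epsilon(\Lambda_k)$ to $X_k^0:=X_k\cap W^s_\epsilon(\Lambda_k)$: an arbitrary subset of a lamination does not inherit a laminar structure of the same dimension. One must prove that the relevant saturated pieces $f^m(X_k^n)$ are \emph{open} in $W^s_{\epsilon/2}(\Lambda_k)$, and this is again a transversality statement: given $x'\in W^s_{\epsilon/2}(\Lambda_k)$ close to $x\in f^m(X_k^n)$ with $x\in W^s_\epsilon(z)$, $z\in K$, one shows $W^u_\epsilon(z)$ contains a disk $C^1$-close to a local unstable manifold of $\Lambda_k$ (Lemma \ref{champ de plaque elementaire} for $f^{-1}$), intersects $W^s_\epsilon(x')$ transversally at a point $z'$, and condition $(ii)$ forces $z'\in K$, whence $x'\in f^m(X_k^n)$. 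Without $(ii)$, $K$ could meet a local stable leaf in a Cantor set and $X_k^0$ would not be open. Your closing remark that ``the continuity of the transverse structure must be controlled with care'' points in the right direction, but the actual mechanism --- transversality plus the closedness of $K$ under local heteroclinic intersections --- is missing from the proposal. (A similar caveat applies to your local-closedness argument: $\ring K_i$ is the interior in $K$, and passing from openness in $K$ to openness of the saturation in $A$ is the content of the paper's Lemma \ref{ouvert}, which again rests on $(ii)$.)
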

\begin{proof}
There are two properties to be proved. The first is the frontier condition of the stratification, the second is the laminar structure on each of these strata. 

\noindent\textbf{Frontier condition of $\Sigma$.}\\
Let $cl(X_i)\cap X_j\not=\emptyset$.  In other words, the closure of $X_i=W^s(\Lambda_i)\cap A$ intersects $X_j=W^s(\Lambda_j)\cap A$.
Let $x$ be a point of this intersection and let $x_n:= f^n(x)$. Then any forward limit $z$ of $(x_n)_n$ belongs to $\Lambda_j\subset W^s_j$. 
Also there exists $(y^k)_k\in W_i^{s\mathbb N}$ such that $x$ is close to $\tilde W^s_\epsilon(y^k)$ when $k$ is large. Thus $x_n$ is close to $\tilde W^s_{\lambda^n\epsilon}(y^k_n)$, with $y^k_n=f^n(y^k)$ by fact \ref{factpedago}.  Taking first $n$ large and then $k$ large w.r.t. $n$, we get that $x_n$ is close to $W^s_i$. This implies that $z$ belongs to $ cl(W^s_i)$.   
We proved that the closure of $W^s_i$  intersects $\Lambda_j\subset W^s_j$. By Proposition \ref{prop1}, the closure of $W^s_i$ contains $W^s_j$ and $W^s_i$ intersects $ W^u_j$. 

We want to prove $X_i\ge  X_j$. This is now a mere consequence of the following Lemma:

\begin{lemm} The map $x\in A\mapsto cl\big(\tilde W_\epsilon^s(x)\big)$ is continuous for the Hausdorff topology.

Moreover for every $i$, the map  $x\in X_i\mapsto \tilde W_\epsilon^s(x)$ is continuous for the $C^1$-topology.
\label{texnic}\end{lemm}
\begin{proof}

We start by proving the upper semi-continuity of the map: $\tilde W^s_\epsilon(y)$ is included in a small neighborhood of $\tilde W^s_\epsilon(x)$, when $y\in A$ is close to $x\in A$.

Let $(y_n)_n$ be a sequence which converges to $x$. For each $n$, chose $y'_n\in \tilde W^s_\epsilon(y_n)$ and take  a limit point $y'$ of $(y'_n)_n$ when $n$ approaches infinity. 
For every $\epsilon'>\epsilon$, we note that $y'$ belongs to the $\epsilon'$-ball centered at $\lim y_n=x$. Also $f^k(y')$ belongs to the $\lambda^k\epsilon'$-ball centered at $\lim f^k (y_n)=f^k(x)$. Thus $y'$ belongs to  $\cap_{\epsilon'>\epsilon}\tilde W^s_{\epsilon'}(x)=cl\big(\tilde W^s_\epsilon(x)\big)$.

The lower semi-continuity is more tricky. We want to show that for every $x\in A$, every $\eta>0$, every $y\in A$ close to $x$, the  $\eta$-neighborhood of $\tilde W_\epsilon^s(y)$ contains $\tilde W_\epsilon^s(x)$.

We observe that the adapted local stable manifolds depend continuously on each set $X_i$. Indeed,  
for $x$ and $y$ in $X_i$, for $n$ large, the points $\{f^k(x)\}_{k\ge n}$ and $\{f^k(y)\}_{k\ge n}$ are nearby $\Lambda_i$ and since $\lambda$ is weaker than the contraction of the basic sets, we have: 
\[\tilde W_\epsilon^s(x)=\bigcap_{m\le n} f^{-m}\Big(W^s_{\lambda^m\epsilon } \big(f^m(x)\big)\Big)\quad \mathrm{and}\quad \tilde W_\epsilon^s(y)=\bigcap_{m\le n} f^{-m}\Big(W^s_{\lambda^m\epsilon } \big(f^m(y)\big)\Big).\]
This proves the second statement of the Lemma.

 Thus the following Lemma accomplishes the proof of the continuity:
 \begin{lemm}\label{champ de plaque elementaire} For every   $y\in X_i$ close to $x\in X_j$ with $i< j$,  the manifold $\tilde W^s_\epsilon(x)$ contains a submanifold $C^1$ close to $\tilde W^s_\epsilon(y)$. \end{lemm}

We first prove this fact for $x,y\in K$ because it sufficient to prove that $(X_k)_k$ is a stratification. 

Thus, we begin with $x\in W^s_j$ and $y\in W^s_i$,  where $W^s_i>W^s_j$.
 
By transitivity of the frontier condition of the stratification $(W^s_k)_k$, it is sufficient to prove the case for which there is no $k$ such that $W^s_i> W^s_k> W^s_j$. 


By replacing $x$ and $y$ by their image by an iterate  $f^n$ of  the dynamics and $\epsilon$ by $\lambda^n\epsilon$, we can suppose these points close to $\Lambda_j$ and $\epsilon$ small, since $\tilde W^s_\epsilon(z)= \cap_{k=0}^n f^{-k}\big(B(f^k(z),\epsilon)\big)\cap f^{-n} \big(\tilde W^s_{\lambda^n\epsilon}(f^n(z))\big)$, for all $z\in A$ and $n\ge 0$.
%
 
Let $C$ be a cone field on a neighborhood $U$ of $W^s_\epsilon (\Lambda_j)$, which satisfies the following properties:
\begin{itemize}
\item for $z\in f(U)\cap U$, $Tf^{-1}$ sends $C(z)$ into $C(f^{-1}(z))$ and $\lambda$-expands its vectors,
\item for $z\in W^s_\epsilon(\Lambda_j)\cap U$, the space $T_z W^s_\epsilon(z)$ is a maximal plane included in $C(z)$, 
\item $z\mapsto C(z)$ is continuous.
\end{itemize}

\begin{sublemma} 
The intersection of  $D:= cl\Big(W^u_\eta(\Lambda_j)\setminus f^{-1}\big(W^u_\eta(\Lambda_j)\big)\Big)$ with $W^s_i$ is compact.\end{sublemma}
\begin{proof} We notice that $D$ is disjoint from $W^s_j$
by Property \ref{propfiltration}, and so disjoint from 
$\cup_{l\le j} W^s_l$, whereas $W^s_i$ has its closure included in $W^s_i\cup \bigcup_{l\le j} W^s_l$.\end{proof}
    
\begin{sublemma} 
for $\eta>0$ small enough and $z\in D\cap W^s_i$, the tangent space $T_zW^s_\epsilon(z)$ intersects $C(z)$ at a subspace of the same dimension as the stable one of $\Lambda_j$. 
\end{sublemma} 
\begin{proof}
Otherwise there is $z\in W^s_i\cap W^u_\epsilon(\Lambda_j)$ arbitrarily close to $\Lambda_j$ such that the vectors of $T_z W^s(z)$ and $C(z)$ does not span the $T_z M$:
$T_z W^u(z)+ C(z)\not = T_z M.$
As $z$ is close to $\Lambda_j$ we can push forward this sum, and the limit exhibit a tangency between $W^u_\epsilon(\Lambda_j)$ and $X_i$. This is a contradiction with the definition of $AS$ compact set.    \end{proof}

By compactness, there exists $\delta>0$ such that for every $z$ in a compact neighborhood $N$ of $D\cap W^s_j$ in $X_i$, there exists a submanifold $P_0(z)$ of $\tilde W^s_\epsilon (z)$ such that:
\begin{itemize}
\item The tangent space of $P_0(z)$ is contained in $C(z)$ and its dimension is the same as the stable one of $\Lambda_j$.
\item $P_0(z)$ contains $z$ and its boundary is $\delta$-distant from $z$.
\end{itemize}
    
We suppose $\delta<\epsilon$. We define by induction $P_{n+1}(z):=U\cap  B(f^{-n-1}(z),\epsilon)\cap f^{-1}(P_n(z))$.
  
By the lambda lemma, $P_n(z)$ is $C^1$-close to a some $\epsilon$-local stable manifolds of $X_j$, if $f^{-n}(z)$ is close to $\Lambda_j$. 

By compactness, this holds uniformly for $z\in N$. such that  $f^{-n}(z)$ is close to $\Lambda_j$.
As $\lambda$ is greater than the contraction of $C$, we have $P_{n}(z)\subset \tilde W^s_\epsilon(f^{-n}(z))$.

As a point $y\in X_j$ nearby $x\in W^s_i$ belongs to $f^{-n}(N)$ with $n$ large, the adapted stable manifold $\tilde W^s_\epsilon(y)$ contains $P_n(f^n(y))$ which is close to $W^s_\epsilon(x)=\tilde W^s_\epsilon(x)$.

We proved that $x\in K\mapsto \tilde W_\epsilon^s(x)$ is continuous. This is actually sufficient to imply the frontier condition of $(X_k)_k$. 

Let us now reapply the argument to finish the lower-semi-continuity proof of $x\in A\mapsto \tilde W_\epsilon^s(x)$. It remains the case $x\in X_j$ and $y\in X_i$ with $i< j$. 
By transitivity of the frontier condition of the stratification $(X_k)_k$, it is now sufficient to prove the case for which there is no $k$ such that $X_i> X_k> X_j$.   Moreover by replacing $x$ and $y$ by their images by some iterates of the dynamics, we can suppose $x$ and $y$ close to $\Lambda_j$.  Then $\tilde W^s_\epsilon(y)$ contains a manifold $P_n(z)$ close o $\tilde W^s_\epsilon(x)= W^s_\epsilon(x)$.
This implies the continuity of $x\in A\mapsto \tilde W^s_\epsilon(x)$.%
%
\end{proof}

\noindent\textbf{Laminar structure on each $X_i$. }

Let $(K_k)_k$ be a filtration adapted to $(\Lambda_k)_k$. 
Let $\ring K_i$ be the interior of $K_i$ for the topology of $K$ induced by $M$.

Let us prove that $X_i^n:=\cup_{x\in f^{-n} (\ring K_i)\cap W^s_i} \tilde W^s_\epsilon(x)$ is laminated by local stable manifolds.

As $f^m(\tilde W_\epsilon^s(x))\subset W_{\lambda^m \epsilon}^s (f^m(x))$ and $\cap_{m\ge 0} f^{m} ( K_i)\cap W^s_i=\Lambda_i$, the set $f^m(X_i^n)$ is included in $W^s_{\epsilon/2}(\Lambda_i)$ for $m$ large. Also $W^s_{\epsilon/2}(\Lambda_i)$ supports a laminar structure by local stable manifolds since $\Lambda_i$ has a local product structure. This structure can be induced on $X_i^n$ if $f^m(X_i^n)$ is an open subset of $W^s_{\epsilon/2}(\Lambda_i)$. For an atlas of $X_i^n$ is then formed by the pull back by $f^m$ of the $W^s_{\epsilon/2}(\Lambda_i)$ charts  with domain included in $f^m(X_i^n)$. Eventually the union of the atlases $(X_i^m)_m$ forms a laminar structure on $X_i$.   

Consequently it remains to show that $f^m(X_i^n)$ is open in $W^s_{\epsilon/2}(\Lambda_i)$. Let $x'\in W^s _{\epsilon/2}(\Lambda_i)$ be close to $x\in f^m(X_i^n)$.

 Let $z\in f^{m-n}(\ring K_i)\cap W^s_i$ be s.t. $x\in f^m(\tilde W^s_{\epsilon}(f^{-m}(z))$. In particular $z$ belongs to $W^s_{\lambda^m\epsilon} (x)\cap K$.
As $m$ is large, $z$ is not far from $x$ which is close to $x'\in W^s_{\epsilon/2}(\Lambda_i)$. 
Also $W^u_\epsilon(z)$ contains a submanifold $C^1$-close to an $\epsilon$-local unstable manifold of $\Lambda_i$ by Lemma \ref{champ de plaque elementaire} applied to the inverse dynamics.

Consequently $W^u_\epsilon(z)$ intersects transversally $W^s_\epsilon(x')$ at a set which contains a point $z'$  close to $z$. By the second condition of $AS$ compact set definition, the point $z'$ belongs to $K$. Thus, the point $z'$ belongs to the open neighborhood $f^{m-n}(\ring K_i)$ of $z\in K$.

 As both $z$ and $z'$ belong to $W^s_i$, Lemma \ref{texnic} implies that $f^m(\tilde W^s_{\epsilon}(f^{-m}(z)))$ and $f^m(\tilde W^s_{\epsilon}(f^{-m}(z')))$ are $C^1$ close. Also $x'$ belongs to $W^s_{\epsilon}(z')\supset f^m(\tilde W^s_{\epsilon}(f^{-m}(z')))$  and is close to $x\in f^m(\tilde W^s_{\epsilon}(f^{-m}(z)))$. Thus $x'$ belongs to $f^m(\tilde W^s_{\epsilon}(f^{-m}(z')))$ when $x'$ is close to $x$. In particular $x'$ belongs to $f^m(X_i^n)$.
\end{proof}

Pulling back this construction by $\pi$, we get on $\hat M$ a stratification of laminations $\hat \Sigma$ whose strata are 
$\hat X_i:= \pi^{-1}(X_i)$ and whose leaves are the connected components of the preimages by $\pi$ of the leaves of $X_i$.

\subsection{Trellis structure on a local stable set of $K$}\label{construction de treillis}
In order to prove the persistence of the stratifications of laminations $\Sigma$ and $\hat \Sigma$, we shall use the existence of trellis structures on $( A, \Sigma)$ and on $(\hat A,\hat \Sigma)$.

A {\it trellis (of laminations)} on a laminar stratified space $(A,\Sigma)$ is a family of
laminations $\mathcal T:=(L_X, \mathcal L_{X})_{X\in \Sigma}$ such that for all strata $X\le Y\in \Sigma$:\begin{itemize}
\item The set $X$ is saturated in $(L_X,\mathcal L_X)$ and $L_X$ is a neighborhood of $X$,
\item each plaque $P$ of $\mathcal L_Y$ included in $L_X$ is $C^1$ foliated by plaques of $\mathcal L_X$,
\item every point, close to the same plaque $P$, is included in a plaque $P'$ of $\mathcal L_Y$ included in $L_X$, and whose foliation by $\mathcal L_X$-plaques is diffeomorphic and $C^1$ close to the one of $P$.\end{itemize} 
 
The lamination $(L_X, \mathcal L_X)$ is \emph{the tubular neighborhood of $X$}.

The tubular neighborhoods of the strata of maximal dimension are equal to their strata since they are open.

In case $\Sigma$ is the canonical stratification induced by an $AS$ compact subset, the $0$-dimensional strata correspond to repulsive periodic orbits.
The tubular neighborhood of these strata is just a neighborhood of them laminated by points, as illustrated by figure \ref{constructionfinal}.d).

To construct the other tubular neighborhoods, here it is much simpler to assume that $M$ is a surface. The existence of tubular nieghborhood is actually an open problem in higher dimension.

In the case $K$ equals $M$ and $f$ is an $AS$ diffeomorphism, we can use the following result of de Melo :
\begin{prop}[\cite{dM}, Thm 2.2 with its Rmk above]
Let $f$ be an $AS$ diffeomorphism of a compact surface. Let $X$ be the stable set of the union of the basic sets with a one dimensional stable direction. 
Then $X$ can be endowed with the 
structure of lamination whose leaves are the stable manifolds.
Moreover there exists a lamination $\mathcal L_X$ on a neighborhood $L_X$ of $X$ such that every leaf of $X$ is a leaf of $\mathcal L_X$ and the leaves of $\mathcal L_X$ that do not intersect 
$X$ form a $C^1$ foliation on $L_X\setminus X$. Furthermore every $\mathcal L_X$-plaque included in $f^{-1}(L_X)$ is sent by $f$ into a plaque of $\mathcal L_X$. 
\end{prop}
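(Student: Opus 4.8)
The plan is to build the foliated neighbourhood $(L_X,\mathcal L_X)$ first locally around each basic set with one dimensional stable direction, and then to glue these local pieces along the Smale order. Such a basic set $\Lambda_k$ is of saddle type, $\dim E^s_k=\dim E^u_k=1$, and by the classification of basic sets of surface diffeomorphisms (see \cite{dM}) it is either a periodic orbit or a totally disconnected (Cantor) hyperbolic set. Since $\Lambda_k$ has a local product structure, the family $\{W^s_\epsilon(x)\}_{x\in\Lambda_k}$ consists of pairwise disjoint $C^1$ arcs depending $C^1$-continuously on $x$, so $W^s_\epsilon(\Lambda_k)$ is a $C^1$ lamination whose transversal is a Cantor (or finite) subset of a local unstable arc $W^u_\epsilon(p)$; pulling back by the forward iterates gives a lamination structure on $W^s(\Lambda_k)=\bigcup_{n\ge 0}f^{-n}\big(W^s_\epsilon(\Lambda_k)\big)$ whose leaves are the full stable manifolds. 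These assemble into a single lamination on $X$: by Proposition \ref{prop1} the closure of $W^s(\Lambda_k)$ meets $W^s(\Lambda_l)$ only when $\Lambda_k\preceq\Lambda_l$, and then the $\lambda$-lemma together with the strong transversality condition force the leaves of $W^s(\Lambda_k)$ to accumulate in the $C^1$ sense onto those of $W^s(\Lambda_l)$, so all plaques fit into common $C^1$ foliated charts with transversal $X$ intersected with a short unstable arc.

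Next I would extend a $C^1$, $f$-invariant tubular neighbourhood across a single Cantor saddle $\Lambda_k$. On a small neighbourhood $U$ of $\Lambda_k$ the local product structure identifies $W^s_\epsilon(\Lambda_k)$ with a product $(\text{arc})\times C$, where $C\subset W^u_\epsilon(p)$ is a Cantor set; the complement $W^u_\epsilon(p)\setminus C$ is a countable disjoint union of gap intervals $(I_j)_j$, and the region of $U$ above each $I_j$ is a $C^1$ disk whose two boundary sides carry the stable arcs over $\partial I_j$. Over each $I_j$ I would interpolate a $C^1$ family of arcs filling the disk and matching the $1$-jets of those boundary arcs along $\partial I_j$; this is an unobstructed isotopy of arcs in a disk, and it is the step that genuinely uses $\dim M=2$. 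To obtain invariance, observe that $f$ permutes the gaps with derivative bounded by hyperbolicity, so it suffices to perform the interpolation on a fundamental domain for the action of $f$ on $W^u_\epsilon(p)\setminus C$ (Lemma \ref{HPSP}) and then spread it by $f$; forward contraction toward $\Lambda_k$ keeps all arcs short and lets the family close up into a genuine $C^1$ foliation of a neighbourhood of $\Lambda_k$ containing $W^s_\epsilon(\Lambda_k)$ as a sublamination, with $f$ sending each plaque into a plaque. Pulling this back by $f^{-n}$, $n\ge 0$, extends the invariant foliation over a neighbourhood $L_{X,k}$ of the whole $W^s(\Lambda_k)$.

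Finally I would patch these neighbourhoods together by induction along the total Smale order, starting from the basic sets nearest the sources and descending toward those nearest the sinks; by Proposition \ref{prop1} this means that at stage $k$ the already constructed stable leaves (those of $W^s(\Lambda_l)$ with $l>k$) are $C^1$ limits of the leaves of $W^s(\Lambda_k)$ now being added. Assuming a $C^1$, $f$-invariant foliated neighbourhood already built over $\bigcup_{l>k}W^s(\Lambda_l)$, I graft onto it the local extension of the previous step around $\Lambda_k$, choosing the gap interpolations so that they coincide with the existing foliation wherever the two overlap; the $C^1$-convergence of stable manifolds guarantees that this matching can be done and preserves invariance. The union over all $k$ yields $(L_X,\mathcal L_X)$ with the stated properties: $X$ is a subset of leaves, the remaining leaves form a $C^1$ foliation of $L_X\setminus X$, and $f$ carries $\mathcal L_X$-plaques contained in $f^{-1}(L_X)$ into $\mathcal L_X$-plaques.

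The main obstacle is the simultaneous control of the $C^1$-regularity of the gap interpolation and of its $f$-equivariance: the interpolating arcs in a gap disk must be tangent, with matching first derivatives, to the hyperbolic stable arcs along the Cantor boundary, and must be compatible with the way $f$ sends one gap disk onto another one that is smaller by a hyperbolic factor, so the interpolation must be fixed once on a fundamental domain and remain $C^1$ after infinitely many such contractions. This distortion estimate, together with the matching of the constructions around the different basic sets along the order, is the technical heart of the argument, and it is precisely the point that has no known analogue when $\dim M>2$ --- as the remark after the statement notes, the existence of such tubular neighbourhoods is open in higher dimension.
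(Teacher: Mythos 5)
The paper does not actually prove this proposition: it is imported verbatim from de Melo (Theorem 2.2 of \cite{dM} together with the remark preceding it) and used as a black box to seed the trellis construction. So the relevant comparison is with de Melo's proof, and your outline does follow the same strategy: laminate each $W^s(\Lambda_k)$ via the local product structure, interpolate across the gaps of the transversal over a fundamental domain of $W^u_{loc}(\Lambda_k)$ (Lemma \ref{HPSP}), propagate by the dynamics and close up with the $\lambda$-lemma, then glue along the filtration order. This is also, in essence, how the paper itself builds the one-dimensional tubular neighbourhoods in the more general setting of Proposition \ref{trellis construit} (extend the tangent line field off the lamination, foliate a fundamental-domain neighbourhood $D_\epsilon$, push forward by $f^n$).

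As a proof, however, your text has a genuine gap, and it sits exactly where you place the ``technical heart''. Matching the $1$-jets of the interpolating arcs at the two boundary leaves of each gap rectangle is not sufficient for $\mathcal L_X$ to be a lamination: a point $c$ of the Cantor transversal that is not a gap endpoint is approached by infinitely many shrinking gaps, and continuity of the tangent field at the leaf over $c$ requires a uniform modulus controlling how far the interpolated tangent field inside a gap may deviate from its boundary data, uniformly over all gaps; moreover this modulus must survive the infinitely many applications of $f$ used to spread the construction from the fundamental domain, which is delicate precisely because $f$ is only $C^1$ (the paper flags this $C^1$-versus-$C^2$ issue in the proof of Proposition \ref{trellis construit} and resolves it with de Melo's smoothing Lemma 1.4 together with an expanded transverse line field). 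None of that is carried out, so the argument stops where the difficulty begins. A secondary slip: a basic set of a surface diffeomorphism with one-dimensional stable and unstable directions need not be a periodic orbit or a totally disconnected set (expanding attractors such as the DA or Plykin attractors, and the Anosov case, are connected); this does not derail the construction, since the transversal of the stable lamination on a short unstable arc is in any case a compact set whose complement is a union of gaps, possibly empty, but the classification as stated is incorrect.
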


This Proposition provides the last tubular neighborhoods of the stratification $\Sigma$: we give the same tubular neighborhood $(L_X,\mathcal L_X)$ for all the basic sets with one dimensional stable direction. 

Let us construct the one dimensional tubular neighborhood of the strata of an $AS$ compact subset of a surface $M$, as illustrated by figures \ref{constructionfinal}.e) and \ref{constructionfinal}.f).
\begin{prop}\label{trellis construit}
Let $f$ be a diffeomorphism of a surface $M$.
Let $K$ be an $AS$ compact subset of $M$. Then for every small $\epsilon>0$, 
the stratification of laminations $\Sigma$ on $A:= \cup_{x\in K} \tilde W^s_\epsilon(x)$ is endowed with a trellis structure $\mathcal T$ such that the tubular neighborhood $(L_k,\mathcal L_k)$ of each $X_k\in \Sigma$ satisfies:
\begin{itemize}
\item Each lamination $\mathcal L_{k}$ is locally $f$-invariant: every plaque of $\mathcal L_{k}$ contained in $f^{-1}(L_{k})$ is sent by $f$ into a plaque of $\mathcal L_{k}$.
\item The tubular neighborhoods are compatible:  for every $x\in L_{k}\cap L_{j}$ with  $j\le k$, every plaque of $\mathcal L_{k}$ containing $x$ is included in a plaque of $\mathcal L_{j}$.
\end{itemize}

\end{prop}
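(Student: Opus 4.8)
The plan is to build the trellis by induction on the codimension of the strata, i.e.\ we treat first the strata $X_k$ attached to basic sets $\Lambda_k$ with two--dimensional stable direction (the sinks, whose tubular neighborhoods are the strata themselves since they are open in $A$), then the strata with one--dimensional stable direction, and finally the zero--dimensional strata attached to sources, whose tubular neighborhood is just a neighborhood laminated by points. For the zero-- and two--dimensional strata the required properties — local $f$--invariance and compatibility with the higher strata — are immediate from the definitions and from the fact that sinks' basins are open and $f$--invariant. So the whole content is the construction of $(L_k,\mathcal L_k)$ for a stratum $X_k$ with one--dimensional stable direction.

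For such a stratum I would proceed as follows. First, by Proposition \ref{prop2} the stratum $X_k=W^s(\Lambda_k)\cap A$ is already a one--dimensional lamination whose plaques are local stable manifolds, and $\Lambda_k$ has a local product structure. Near $\Lambda_k$ the desired lamination $\mathcal L_k$ is obtained from the local product structure: $W^s_{\epsilon/2}(\Lambda_k)$ is laminated by local stable manifolds and one thickens it to a genuine $C^1$ foliation of a neighborhood in $M$ by using an invariant cone field transverse to $T W^s$ and integrating it (here dimension two is used: the stable foliation of $\Lambda_k$ on the surface admits a $C^1$ extension to a neighborhood, exactly as in de~Melo's Proposition quoted above for the case $K=M$). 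Then one pushes this germ forward: since $\cap_{m\ge 0}f^m(K_k)\cap W^s_k=\Lambda_k$ by Property \ref{propfiltration}, every point of $X_k$ has a large forward iterate lying in $W^s_{\epsilon/2}(\Lambda_k)$, so $L_k:=\bigcup_{n\ge 0}f^{-n}(\text{neighborhood of }\Lambda_k)$, intersected with a suitable saturated set, carries the lamination $\mathcal L_k$ obtained by pulling back the local one, and local $f$--invariance holds by construction. The argument that the pulled--back plaques fit together into a genuine lamination (continuity of the plaque field, openness of the images) is the same as the one already carried out in the proof of the ``laminar structure on each $X_i$'' part of Proposition \ref{prop2}, using Lemmas \ref{texnic} and \ref{champ de plaque elementaire}.

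The compatibility property — for $j\le k$ and $x\in L_k\cap L_j$, every $\mathcal L_k$--plaque through $x$ lies in an $\mathcal L_j$--plaque — is proved by the same inductive/frontier mechanism: if $L_k\cap L_j\neq\emptyset$ with $j<k$ then $X_j\ge X_k$ in the stratification, hence (Proposition \ref{prop1}) $cl(W^s_j)\supset W^s_k$, and near $\Lambda_k$ the stable plaques of $\Lambda_j$ accumulate on those of $\Lambda_k$ by the $\lambda$--lemma and the transversality/inclusion axiom of $AS$; transporting this by the dynamics (using that both $\mathcal L_k$ and $\mathcal L_j$ are locally $f$--invariant and that their plaques near $\Lambda_k$, resp.\ $\Lambda_j$, are close to the model stable plaques) gives the inclusion of plaques on all of $L_k\cap L_j$. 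One should order the construction so that $\mathcal L_j$ is built before $\mathcal L_k$ whenever $j<k$, so that when constructing $\mathcal L_k$ one already has $\mathcal L_j$ available to impose the compatibility; concretely, one constructs $\mathcal L_k$ near $\Lambda_k$ so that its plaques are subplaques of the already-built $\mathcal L_j$ for all $j$ with $X_j\ge X_k$, which is possible because the $\mathcal L_j$ restrict near $\Lambda_k$ to the stable foliation of $\Lambda_k$ thickened in the $E^u$ direction, and $\mathcal L_k$ will be finer.

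The main obstacle is the simultaneous $C^1$ thickening of the stable foliations into genuinely $C^1$ foliations of a surface neighborhood that are \emph{mutually compatible} across the partial order of strata and \emph{locally $f$--invariant} at the same time. Local $f$--invariance forces $\mathcal L_k$ to be, near $\Lambda_k$, essentially the stable lamination dragged around by $f$, which is only $C^0$ in the transverse (unstable) direction a priori; getting a bona fide $C^1$ transverse structure requires the surface--specific argument (invariant cone fields together with the $\lambda$--lemma, as in de~Melo), and weaving together finitely many such foliations so that each one refines those above it in the stratification — without destroying $f$--invariance — is the delicate bookkeeping that the dimension--two hypothesis is there to make possible. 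I expect this gluing, rather than any single estimate, to be where the real work lies; the individual ingredients (local product structure, Lemmas \ref{texnic} and \ref{champ de plaque elementaire}, Property \ref{propfiltration}, de~Melo's extension) are already in hand.
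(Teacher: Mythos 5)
Your reduction to the one--dimensional strata, the induction along the partial order of the stratification, and the idea of obtaining compatibility by forcing the new lamination to refine the previously built $\mathcal L_j$'s all match the paper. But there is a genuine gap at the heart of the construction: you build the foliation on a \emph{neighborhood of $\Lambda_k$} and then declare that pulling back by $f^{-n}$ gives a lamination on $L_k$ with ``local $f$--invariance by construction.'' This does not work as stated. A neighborhood $U$ of $\Lambda_k$ overlaps heavily with $f(U)$, so the pullbacks $f^{-n}(\mathcal F|U)$ for different $n$ disagree on their overlaps unless $\mathcal F$ is already $f$--invariant on $U\cap f(U)$ — and producing a genuinely $C^1$, $f$--invariant transverse foliation near a hyperbolic basic set of a $C^1$ diffeomorphism is exactly the problem one cannot solve directly (``integrating an invariant cone field'' gives a foliation tangent to the cones, not an invariant one; and smoothing an invariant lamination, as you must do off the stable set, destroys invariance). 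The paper breaks this circle with a fundamental-domain argument that is absent from your proposal: one takes $\Delta:=cl\bigl(W^u_\delta(\Lambda_{k+1})\setminus f^{-1}(W^u_\delta(\Lambda_{k+1}))\bigr)$ (Lemma \ref{HPSP}), saturates a neighborhood $D$ of $\Delta\cap K$ by adapted local stable manifolds to get $D_\epsilon$ (open by Lemma \ref{ouvert}), and builds the foliation only there, transverse to the family $\mathcal F_u$ of local unstable manifolds over $D$. Because the two collars $B^+$ and $B^-=$ (saturation of $f^{-1}(D^+)$) are \emph{disjoint}, one may define the foliation on $B^-$ to be the $f$--pullback of the one on $B^+$ with no consistency conflict; only then do the iterated pullbacks $f^{-n}(\mathcal F')$ tile $L_{k+1}\setminus X_{k+1}$ coherently, and the lambda lemma shows the limit onto $X_{k+1}$ is a lamination.

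Two further points. First, the $C^1$ issue you flag is resolved in the paper not by a cone-field integration near $\Lambda_{k+1}$ but by: (i) extending the tangent line field of the already-built $\tilde{\mathcal L}=\cup_{j}\mathcal L_j$ continuously over $D_\epsilon$ and taking a smooth approximation, (ii) applying de Melo's smoothing lemma (Lemma 1.4 of \cite{dM}) to make $\tilde{\mathcal L}$ smooth off a neighborhood $\tilde{\dot L}$ of $\tilde X$, and (iii) patching; compatibility with the $\mathcal L_j$, $j\le k$, is then automatic because $\mathcal L_{k+1}$ is an extension of $\tilde{\mathcal L}|\tilde{\dot L}$ rather than an independently built object checked a posteriori. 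Second, your description of the $\mathcal L_j$ near $\Lambda_k$ as ``the stable foliation of $\Lambda_k$ thickened in the $E^u$ direction, with $\mathcal L_k$ finer'' is misleading: all the nontrivial tubular neighborhoods here are one--dimensional, and compatibility means the $\mathcal L_{k+1}$--plaques are subarcs of $\mathcal L_j$--plaques, which is what extending $\tilde{\mathcal L}$ delivers. You correctly identified where the real work lies, but the proposal does not supply the mechanism that does that work.
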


\begin{coro}\label{trellis construit en haut}
Let $\hat X_k$  and $\hat{\mathcal L}_{k}$ be the laminations on $\pi^{-1}(X_k)$ and $\hat L_k:= \pi^{-1}(L_k)$ whose leaves are the connected components of preimages by $\pi$ of the leaves of $ X_k$ and $\mathcal L_{k}$ respectively. Let $\hat A:= \pi^{-1}(A)$ and $\hat \Sigma:= (\hat X_k)_k$. 
The family $\hat {\mathcal T}:= (\hat{L}_{k},\hat{\mathcal L}_{k})_k$ is a trellis structure on the pair $(\hat A,\hat \Sigma)$.  \end{coro}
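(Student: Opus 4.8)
The plan is to verify the three defining axioms of a trellis of laminations for the pulled-back family $\hat{\mathcal T}=(\hat L_k,\hat{\mathcal L}_k)_k$ on $(\hat A,\hat\Sigma)$, checking each one by lifting the corresponding property already established for $\mathcal T$ on $(A,\Sigma)$ in Proposition \ref{trellis construit} through the bundle projection $\pi$. The key point throughout is that $\pi$ is a $C^1$ bundle with compact fibers, so that for every open $U\subset A$ the preimage $\pi^{-1}(U)$ is open in $\hat A$, $\pi$ is a local $C^1$ diffeomorphism in suitable product charts transverse to the fibers, and a leaf of $\hat{\mathcal L}_k$ is by construction a connected component of $\pi^{-1}(\ell)$ for a leaf $\ell$ of $\mathcal L_k$. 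First I would record the preliminary fact, already used in the excerpt just before this corollary for $\hat\Sigma$, that $\hat X_k=\pi^{-1}(X_k)$ inherits a laminar structure whose plaques are the connected components of $\pi^{-1}(P)$ for plaques $P$ of $X_k$, and similarly $(\hat L_k,\hat{\mathcal L}_k)$ is a genuine lamination: the changes of charts downstairs compose with the $C^1$ local trivializations of $\pi$ to give changes of charts of the required form (smooth along the $\mathbb R^{n}\times(\text{fiber})$ direction, locally constant on the transverse parameter).

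Next I would check the first axiom: for strata $\hat X_k\le \hat X_j$ (which by definition of $\hat\Sigma$ holds iff $X_k\le X_j$, i.e. $k\le j$), the set $\hat X_k$ is saturated in $(\hat L_k,\hat{\mathcal L}_k)$ and $\hat L_k$ is a neighborhood of $\hat X_k$. Saturation is immediate: a leaf of $\hat{\mathcal L}_k$ is a connected component of $\pi^{-1}(\ell)$ with $\ell$ a leaf of $\mathcal L_k$, and $\ell$ is either contained in $X_k$ or disjoint from it (since $X_k$ is saturated in $\mathcal L_k$), so $\pi^{-1}(\ell)$ and hence each component is contained in $\hat X_k=\pi^{-1}(X_k)$ or disjoint from it. That $\hat L_k=\pi^{-1}(L_k)$ is a neighborhood of $\hat X_k=\pi^{-1}(X_k)$ follows from continuity of $\pi$ since $L_k$ is a neighborhood of $X_k$. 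For the second axiom, given a plaque $\hat P$ of $\hat{\mathcal L}_j$ contained in $\hat L_k$, I would project: $\pi(\hat P)$ is contained in a plaque $P$ of $\mathcal L_j$ inside $L_k$, which by Proposition \ref{trellis construit} is $C^1$-foliated by plaques of $\mathcal L_k$; pulling this foliation back by the restriction $\pi|_{\hat P}$ — a $C^1$ submersion onto $P$ with the fiber direction complementary, hence the plaque-foliation lifts — foliates $\hat P$ by connected pieces of $\pi$-preimages of $\mathcal L_k$-plaques, i.e. by plaques of $\hat{\mathcal L}_k$, and the lift is $C^1$ because $\pi$ is $C^1$. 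The third axiom (persistence of this foliated structure for plaques through nearby points) follows the same way from the third axiom downstairs together with continuity of $\pi$ and of the local trivializations: a point $\hat q$ close to $\hat P$ projects to a point close to $P$, which lies in a plaque $P'$ of $\mathcal L_j$ in $L_k$ whose $\mathcal L_k$-foliation is diffeomorphic and $C^1$-close to that of $P$; lifting through $\pi$ gives a plaque $\hat P'$ of $\hat{\mathcal L}_j$ through $\hat q$ with the required diffeomorphic, $C^1$-close foliation by $\hat{\mathcal L}_k$-plaques.

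Finally I would note that the two extra bullet properties in Proposition \ref{trellis construit} also lift, although they are not part of the bare definition of a trellis and so strictly only the three axioms above are needed here: local $f$-invariance of $\hat{\mathcal L}_k$ follows because $\hat f$ lifts $f$ (so $\pi\circ\hat f=f\circ\pi$) and hence sends a connected component of $\pi^{-1}(\ell)$ into a connected component of $\pi^{-1}(f(\ell))$, reducing local invariance of $\hat{\mathcal L}_k$ to that of $\mathcal L_k$; and compatibility of the $\hat L_k$ for $j\le k$ reduces likewise to compatibility of the $L_j,L_k$ by projecting plaques. I expect the only mildly delicate step to be the lifting of the plaque-foliations in the second and third axioms — one must make sure the $C^1$-foliation of a plaque $P$ of $\mathcal L_j$ by $\mathcal L_k$-plaques really does lift to a $C^1$-foliation of each component of $\pi^{-1}(P)$ with leaves that are components of $\pi^{-1}$ of $\mathcal L_k$-plaques, and that "$C^1$-close" is preserved; this is exactly where the hypothesis that $\pi$ is a $C^1$ bundle (not merely a continuous fibration) is used, via local $C^1$ trivializations in which $\pi$ becomes a projection and the lift is the obvious product foliation.
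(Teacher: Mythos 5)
Your verification is correct and is essentially the argument the paper intends: the paper gives no separate proof of this corollary, treating it as an immediate consequence of Proposition \ref{trellis construit} together with the fact that $\pi$ is a $C^1$ bundle, and your axiom-by-axiom lifting through local $C^1$ trivializations is exactly the routine check being left to the reader. Your closing remark correctly isolates the one point of substance (that the $C^1$ plaque-foliations and their $C^1$-closeness lift because $\pi$ is locally a $C^1$ product projection).
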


\begin{proof}[Proof of Proposition \ref{trellis construit}]

As we noticed, it remains only to construct the tubular neighborhood of the one dimensional strata.
Let $J$ be the set of indexes $j$ such that $X_j$ is one dimensional.  We proceed by induction on $k\in J$.
As $(L_0,\mathcal L_0)=X_0$ is a tubular neighborhood of $X_0$ (even if $X_0$ is one dimensional), 
we suppose the tubular neighborhood $(L_j,\mathcal L_j)_{j\le k}$ constructed. This is a trellis structure for the stratification $(X_j)_{j\le k}$ on $A_k:=\cup_{j\le k} X_j$. 

Let us construct $(L_{k+1},\mathcal L_{k+1})$.

By Lemma \ref{HPSP},  for every $\delta$ small enough:
\[\Delta:= cl\Big(W_\delta^u(\Lambda_{k+1})\setminus  f^{-1}\big(W_\delta^u(\Lambda_{k+1})\big)\Big)\]
is a proper fundamental domain for $W^u(\Lambda_{k+1})$. we take $\delta<\epsilon$.

 Let $D$ be a small open neighborhood of $\Delta\cap K$ in $W^u_\epsilon(\Lambda_{k+1})\cap K$. 

Let $\mathcal F_u$ be the family of local unstable manifolds $\{W^u_\epsilon (x);\; x\in D\}$.

By the strong transversality hypothesis, the manifolds of $\mathcal F_u$ are transverse to the leaves of the laminations $(X_j)_{j\le k}$. 

 Let $J':=J\cap \{0,\cdots ,k\}$. As the laminations $(L_j,\mathcal L_j)_{j\in J'}$ are compatible, the union of the atlases forms an atlas of lamination $\tilde {\mathcal L}$ on $\tilde L:= \cup_{j\in J'} L_j$. The union $\tilde X$ of the laminations $(X_j)_{j\in J'}$ is a saturated subset of $(\tilde L, \tilde{\mathcal L})$ since the laminations $(X_j,\mathcal L_j)_{j\in J'}$ are compatible. We endow $\tilde X$ with a laminar structure whose leaves are the leaves of $(X_j)_{j\in J'}$.

The lamination $\tilde X$ is closed in $A_{k+1}$ and so the intersection between $cl(D)$ and $\tilde X$  is compact. Consequently, the transversality between $\tilde X$ and $\mathcal F_u$ is uniform at $D$.

Thus we can shrink each tubular neighborhood $(L_j,\mathcal L_j)_{j\in J'}$, in order that $ (\tilde L,\tilde {\mathcal L})$ is transverse to $\mathcal F^u$ at $D$.

Let $D_{\epsilon}:=\cup_{x\in D} \tilde W_{\epsilon}^s(x)$. It is an open subset of $A_{k+1}:=\cup_{j\le k+1} X_j$ by the following Lemma proved at the end of this section:

\begin{lemm}\label{ouvert}Let $O$ be an open subset of $K$ and let $\epsilon'<\epsilon$. Then the subset $O_{\epsilon'}:= \cup_{x\in D} \tilde W_{\epsilon'}^s(x)$ is open in $A$.
\end{lemm}
%
%
%
%
%
%


We notice that $D_\epsilon\setminus \tilde X$ is open (in $A_{k+1}$) and an open subset of it is foliated by $\tilde {\mathcal L}\setminus \tilde X$. 

We want to extend $(\tilde L\setminus \tilde X, \tilde {\mathcal L}{|\tilde L\setminus \tilde X})$ to a $C^1$ foliation on $\mathcal D_{\epsilon}\setminus \tilde X$ transverse to $\mathcal F_u$ and locally $f$-invariant.  As the leaves of $\tilde {\mathcal L}$ are of dimension one, this would be simple if this foliation would have been of class $C^2$. However it is in general not the case since $f$ is of class $C^1$. The construction is done in three steps:

\begin{itemize}
\item First we suppose $\delta$ small enough in order that $T\tilde {\mathcal L}$ can be continuously extended to  a continuous line field $\chi$ on an open neighborhood of $D_\epsilon$, uniformly transverse to $\mathcal F_u$. Let $\tilde {\chi}$ be a smooth approximation of $\chi$.

\item We smooth (and so lose partially the local invariance) $\tilde{\mathcal L}$ off a neighborhood $\tilde{\dot L}$ of $\tilde X$ by the following Lemma:%

\begin{lemm}[ Lemma 1.4 \cite{dM}]
Let $\mathcal F$ be a one dimensional $C^1$ foliation of an open subset $U$ of a manifold. Let $C$ be a compact subset of $U$ and $\eta>0$. Then for every neighborhood $\hat C$ of $C$ there exists a foliation $\mathcal F'$ which coincides with $\mathcal F$ on a neighborhood of $C$, which is smooth on the complement of $\hat C$ and whose tangent space is everywhere $\eta$-close to the one of $\mathcal F$.    
\end{lemm} 
Then we patch this foliation to the one integrating $\tilde \chi$ in order to form a lamination $\mathcal F$ on $D_\epsilon$ such that:
\begin{itemize} \item $\mathcal F$ restricted to $\tilde{\dot  L}$ is $\tilde {\mathcal L}|\tilde{\dot  L}$,
\item $\mathcal F$ restricted to $D_\epsilon \setminus \tilde L$ is smooth.
\end{itemize}
\item For $\delta'$ sufficiently close to $\delta$, $ B^+:= \cup_{x\in D^+} \tilde W^s_\epsilon(x)$ is disjoint from $B^- := \cup_{x\in D^-} \tilde W^s_\epsilon(x)$, with $D^+:= K\cap W^u_\delta (\Lambda_{j+1})\setminus cl(W^u_{\delta'} (\Lambda_{j+1}))$ and $D^-:= f^{-1}(D^+)$.
 
We define on $B^+\cup B^-\cup \tilde{\dot{L}}$ the lamination $\mathcal F'$ equal to $\mathcal F$ on $B^+\cup \tilde{\dot L}$ and equal to the pullback $f_*\mathcal F$ on $B^-\cup f^{-1}(\tilde{\dot L})$. The lamination $\mathcal F'$ does exist since 
$\tilde{\mathcal F}|\tilde{\dot L}$ is locally $f$-invariant and 
$B^+$ and $B^-$ are disjoint.
 
We complement as above $\mathcal F'$ to a lamination on $D_\epsilon$ which is locally $f-$invariant.
Moreover we can suppose all the approximations done sufficiently small such that $\mathcal F'$ is transverse to the $\epsilon$-local unstable manifolds of $\Lambda_{k+1}$'s points.
\end{itemize}
Let $L_{k+1}$ be the union $X_{k+1}\cup \bigcup_{n\ge 0} f^{-n}(\mathcal F')$ that we laminate by the pull back of the leaves of $\mathcal F'$ and the leaves of $X_{k+1}$. This defines a lamination $\mathcal L_{k+1}$ by the lambda lemma.
\end{proof}
\begin{proof}[Proof of Lemma \ref{ouvert}] Let $x\in O_{\epsilon'}$. We want to prove that $y\in A$ close to $x$ belongs to $O_{\epsilon'}$.
Let $z\in O$ be such that $x$ belongs to $\tilde W^s_{\epsilon'}(z)$. The intersection $\tilde W^s_{\epsilon'}(x)$ with $W^u_{\epsilon}(z)$ is transverse and contains $z$.  By Lemma \ref{texnic}, for $y$ close enough to $x$, the connected submanifold $W^s_{\epsilon'}(y)$ intersected with $W^u_{\epsilon}(z)$ contains a point $z'$ close to $z$. 
The point $z'$ must belong to $K$ by definition of $AS$-compact set, and so to $O$.  Thus $y$ belongs to $O_{\epsilon'}$. 
 \end{proof}

\section{Persistence of trellis structures}
In this section we recall the main Theorem of \cite{Bermem} on persistence of stratifications of laminations.
The reader should look at this paper for more details and examples. 

 Let us first recall a few terminologies. 
\subsection{Plaque-expansiveness}\label{plaque-expansive}
Let $(L,\mathcal L)$ be a compact lamination embedded into a manifold $M$. 
Let $\tilde f$ be a diffeomorphism of $M$ which preserves and leaves invariant $(L,\mathcal L)$. For a positive real number $\eta$, an \emph{$\eta$-pseudo-orbit $(x_n)_{n\in \mathbb Z}$ which respects $\mathcal L$} is a sequence of $L$ such that  for all $n\in \mathbb Z$, $\tilde f(x_n)$ and $x_{n+1}$ 
belong to a same plaque of $\mathcal L$ of diameter less than $\eta$. A diffeomorphism $\tilde f$ is \emph{plaque-expansive} at $(L,\mathcal L)$ if for every small $\eta$, for all $\eta$-pseudo orbits $(x_n)_n$ and $(y_n)_n$ respecting $\mathcal L$ such that $x_n$ and $y_n$ are $\eta$-close for every $n\in \mathbb Z$, then $x_0$ and $y_0$ belong to a same small plaque.
This is the definition used in HPS' Theorem \ref{HPS}.

The persistence Theorem on trellis structure (see below) works also for endomorphisms of manifold ($C^1$ maps that are possibly non invertible).  
Thus instead of considering pseudo-orbits, we shall regard \emph{forward pseudo-orbits} which are pseudo-orbits implemented by $\mathbb N$. Moreover the laminations are not all invariant and not all compact. This leads us to generalize the above concepts in the following form.

Let $(L, \mathcal L)$ be a lamination. Let $f$ be a continuous map from an open subset $V$ of $L$ into $L$.  Let $\eta$ be a continuous, positive function on $L$. An \emph{$\eta$-forward-pseudo-orbit $(x_n)_{n\ge 0}\in V^\mathbb N$ which respects $\mathcal L$}  is a sequence such that 
 for all $n\ge 0$, $f(x_n)$ and $x_{n+1}$ belong to a same plaque of $\mathcal L$ of diameter less than $\eta(x_n)$.

Let $X\subset V$ be a $\mathcal L$-saturated set. We say that $f$ is \emph{$\eta$-forward-plaque-expansive at $X$} relatively to $\mathcal L$ if for every pair of $\eta$-pseudo-orbits $(x_n)_{n\ge 0}\in V^\mathbb N$ and $(y_n)_{n\ge 0}\in V^\mathbb N$ which respect $\mathcal L$ and such that $d(x_n, y_n)\le \eta(x_n)$, then the points  $x_0$ and $y_0$ belong to a same small plaque, which is moreover included in $X$. We say that $f$ is \emph{forward-plaque-expansive at $X$} if $f$ is $\eta$-forward-plaque-expansive at $X$ for every function $\eta$ smaller than a given function. 

\begin{rema}This is a more general definition in the following example. Let $f$ be a D.A. of a torus (see \cite{Sm}). This is an $AS$ diffeomorphism whose basic sets are a repulsive point $R$ and an attractor whose stable manifolds form a lamination $\mathcal L$ which ends at $R$. For every $\eta>0$, nearby $R$, there are many $\eta$-pseudo-orbits which respect $\mathcal L$ and stay $\eta$-close to $R$. They are not all in the same plaque and so $f$ is not $\eta$-forward-plaque-expansive at $\mathcal L$. It is not the case for a suitable positive function, as shown  in the following example.\end{rema}

\begin{exem} \label{example d'expan}
Let $\Sigma$ be the stratification constructed in Proposition \ref{prop2} for an \emph{AS} compact set $K$ of $f$.
Let $X$ be a stratum of $\Sigma$ associated to a basic set $\Lambda$, and let $(L_X,\mathcal L_X)$ be its tubular neighborhood given by Proposition \ref{trellis construit}.
For every $\delta>0$, by taking $L_X$ and a function $\eta$ small enough, every $\eta$-pseudo-orbit eventually lands in the $\delta$-neighborhood of $\Lambda$.
Therefore it is sufficient to prove that for all $\eta$-close $\eta$-pseudo-orbits $(x_n)_n$ and $(y_n)_n$ in $B(\Lambda,\delta)$ which respect $\mathcal L_X$, then $x_0$ and $y_0$ belong to a same small local stable manifold of $\Lambda$. It is indeed the case. For put $x'_n:= f^n(x_0)$ and $y'_n:= f^n(y_0)$. By $\lambda$-contraction along the stable direction of $\Lambda$, 
$x_1'$ is $(\lambda+1)\eta$-close to $x_1$, $x_2'$ is $(\lambda(\lambda+1)+1)\eta$-close to $x_2$, and so on $x'_n$ is $(\sum_{i\le n} \lambda^i)\eta$-close to $x_n$.  
That is why $(x'_n)_n$ is  $\eta/(1-\lambda)$-close to $(x_n)_n$.
Thus we can shadow  $(y_n)_n$, $(x_n)_n$ and $(x'_n)_n$ by the orbit of a same point $z\in \Lambda$. We note that $x_0$ and $y_0$ belong to a same small local stable manifold of $z$.
\end{exem} 
 
\subsection{Statement of the persistence Theorem of stratifications of laminations}

Let $(A,\Sigma)$ be a stratified space endowed with a structure of trellis $\mathcal T$. An \emph{embedding} of $(A,\Sigma,\mathcal T)$ into a manifold $M$ is a homeomorphism from $A$ onto its image in $M$ whose restriction to each tubular neighborhood $(L_X, \mathcal L_X)$ is an immersion of laminations. We recall that the set of embeddings of laminations is endowed with the $C^1$ compact-open topology (see the introduction). 
  We endow the space of immersions $i$ of $(A,\Sigma,\mathcal T)$ into a manifold $M$, with the initial topology of the following inclusion into the product of the spaces of embeddings from $(L_X,\mathcal L_X)$ into $M$ for every ${X\in \Sigma}$:
\[i\mapsto (i{|L_X})_{X\in \Sigma}\]
The initial topology is the coarsest one such that the above map is continuous.  
Given an open subset {${\dot A}\subset A$}, we denote by {$\Sigma{|{\dot A}}$} the stratification of laminations on ${{\dot A}}$ whose strata are the restrictions {$X{|X\cap {\dot A}}$} of  the strata $X\in \Sigma$ to {$X\cap {\dot A}$}. Similarly the trellis structure {$\mathcal T{|{\dot A}}$} is made by restricting each tubular neighborhood $(L_X,\mathcal L_X)$ to $L_X\cap \dot A$.

\begin{theo}[Cor. 2.2.9 \cite{Bermem}] \label{main} Let $(A,\Sigma)$ be a stratified space supporting a trellis structure $\mathcal T$. Let $i$ be an embedding of $(A,\Sigma, \mathcal T)$ into a manifold $M$.
Let $f$ be a $C^1$ map from $M$ into itself,  which sends the closure of $A$  into $A$ such that:

\begin{enumerate}[i.]
\item $f$ preserves and normally expands each stratum $X$ of $\Sigma$,
\item $f$ sends each plaque of $\mathcal L_X$ included in $f^{-1}(L_X)$ into a plaque of $\mathcal L_X$,
\item $f{|f^{-1}(L_X)}\cap L_X$ is forward plaque-expansive at $X$ relatively to $\mathcal L_X$, for every $X\in \Sigma$.
\end{enumerate}
Let $\dot A$ be an open subset of $A$ whose closure is compact, included in $A$ and sent by $f$ into $\dot A$.

\indent  Then for $f'$ $C^1$ close to $f$,  
  there exist an embedding $i'$ of $({\dot A},\Sigma{|{\dot A}}, \mathcal T{|{\dot A}})$ into $M$, close to $i|\dot A$ and a continuous map $f'^*$ $C^0$-close to $f^*:= f{|{\dot A}}$ such that the following diagram commutes:
\[\begin{array}{rcccl}
&& f'&&\\
 &   M    &\rightarrow &M'&\\
i'&\uparrow& & \uparrow&i'\\
 &{\dot A}       &\rightarrow&{\dot A}& \\
 &&f'^*&&\end{array}\]

 Moreover, there exists a family of neighborhoods $({ V}_X)_{X\in \Sigma}$  in ${\dot A}$ of $({\dot A}\cap X)_{X\in \Sigma}$ respectively  such that
for every $f'$ $C^1$ close to $f$, the map $f'^*$ sends each plaque of $\mathcal L_X$ included in ${ V}_X$ into the same  leaf of $\mathcal L_X$ as $f^*$.
 \end{theo}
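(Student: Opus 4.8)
The plan is to build the perturbed embedding $i'$ and the conjugacy ${f'}^*$ by induction on the strata of $\Sigma$ ordered by the frontier relation $\le$, at each step running a graph-transform (Hadamard--Perron) fixed-point argument inside the tubular neighbourhood $(L_X,\mathcal L_X)$. The engine is the normal expansion of hypothesis (i): as in the normally expanded case of the Hirsch--Pugh--Shub theory (Theorem \ref{HPS}), it makes the graph transform over the disk bundle normal to $X$ --- run for the inverse dynamics on a neighbourhood of $X$ where it is defined and then propagated forward --- a contraction on the relevant space of normal sections, so it has a unique fixed point; hypothesis (ii) guarantees that this transform respects the laminar structure. For a $\le$-minimal stratum this already produces, for $f'$ $C^1$-close to $f$, an embedding $i'_X$ of $(L_X,\mathcal L_X)$ close to $i{|L_X}$ whose image is preserved by $f'$ with the same leaf dynamics. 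The two points that are not routine are: carrying the construction through the whole stratification while keeping the tubular neighbourhoods mutually compatible, and producing ${f'}^*$ over $\dot A$ --- both handled below.

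For the inductive step, suppose compatible perturbed data $\bigl(i'_Y,(L_Y,\mathcal L_Y)',{f'}^*_Y\bigr)$ has been built for every $Y<X$. Inside $L_X$ I would work in the space $\mathcal E$ of embeddings $j$ of $(L_X,\mathcal L_X)$ that are $C^1$-close to $i{|L_X}$, that agree with $i'_Y$ on each $L_Y\cap L_X$, and such that the $j$-image of every $\mathcal L_X$-plaque lying in $L_Y$ is foliated by $i'_Y$-images of $\mathcal L_Y$-plaques. The three trellis axioms --- each $\mathcal L_X$-plaque in $L_Y$ being $C^1$-foliated by $\mathcal L_Y$-plaques, the openness clause, and the local $f$-invariance of the $\mathcal L_Y$ --- are exactly what make $\mathcal E$ non-empty, closed in the $C^1$ topology, and invariant under the graph transform $\Gamma_{f'}$ (built from $f'$ and a fixed cone field around $TX$). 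Restricting everything to $\dot A$, whose closure is compact and which $f$ maps into itself, makes the ambient spaces of maps complete and the contraction uniform, so $\Gamma_{f'}$ has a unique fixed point $i'_X$ in $\mathcal E$; its continuity in the leaf parameter and its compatibility with the lower strata are inherited from the iterates of $\Gamma_{f'}$ since $\Gamma_{f'}$ preserves the $\mathcal L_Y$-foliations coming from $Y<X$.

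Once every stratum has been treated, $i'$ is the embedding whose restriction to each $L_X$ is $i'_X$ --- these restrictions being consistent on overlaps by construction --- and ${f'}^*$ is obtained by pulling $f'$ back through $i'$ on the region where $i'(x)$ and $f'(i'(x))$ both lie over $\dot A$. Forward plaque-expansiveness (hypothesis (iii)) is what makes ${f'}^*$ well-defined up to staying inside plaques, continuous, $C^0$-close to $f^*$, and --- on the neighbourhoods $V_X$ of $\dot A\cap X$ where plaque-expansiveness is quantitative --- such that ${f'}^*$ sends an $\mathcal L_X$-plaque into the same leaf of $\mathcal L_X$ as $f^*$ does; this last point is the ``moreover''. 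The shadowing estimate underlying this is essentially the one already carried out in Example \ref{example d'expan}. Commutativity of the displayed square is immediate from the definition of ${f'}^*$, holding plaque by plaque.

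I expect the main obstacle to be this inductive compatibility: the fixed-point argument cannot be run over all nearby laminations but only over the constrained space $\mathcal E$ carved out by agreement with, and foliation by, the already-perturbed lower strata, and one must verify that $\Gamma_{f'}$ preserves $\mathcal E$ --- which is precisely why the trellis axioms were imposed in the first place. A secondary difficulty is regularity: since $f$ is merely $C^1$, the laminations $\mathcal L_X$ are smooth along leaves but only continuous transversally, so no $C^2$ smoothing is available and all estimates must be made at the $C^1$ level; the non-compactness of the $L_X$ away from $\dot A$ is controlled only because $\dot A$ has compact closure and is mapped into itself by $f$.
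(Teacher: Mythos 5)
You should first note that the paper contains no proof of this statement: Theorem \ref{main} is quoted verbatim as Corollary 2.2.9 of \cite{Bermem} and is used as a black box, so there is no in-paper argument to compare yours against. That said, your strategy --- a stratum-by-stratum induction in which each tubular neighbourhood $(L_X,\mathcal L_X)$ is perturbed by a graph-transform driven by the normal expansion of $X$, with the trellis axioms guaranteeing that the transform preserves the space of candidates compatible with the already-perturbed lower strata, and with forward plaque-expansiveness used to extract the semiconjugacy $f'^*$ --- is indeed the strategy of the cited memoir, and your identification of the role of each hypothesis (i)--(iii) and of the set $\dot A$ is accurate.

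As a proof, however, what you have written is a plan rather than an argument, and the two points you yourself flag as ``not routine'' are precisely the ones left entirely open. First, the contraction of $\Gamma_{f'}$ on your space $\mathcal E$ is asserted, not established: note that hypothesis (ii) only makes $\mathcal L_X$ \emph{locally} invariant ($f$ sends plaques of $\mathcal L_X$ contained in $f^{-1}(L_X)$ into plaques), the laminations $L_X$ are non-compact, and $f$ need not map $L_X$ into $L_X$ at all; so the transform must be localized, the space of sections on which it acts must be specified together with the norm in which it contracts, and one must check that iterates do not leak out of $L_X$ --- this is exactly where the compactness of $cl(\dot A)$ and $f(cl(\dot A))\subset\dot A$ do real work, and none of it is carried out. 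Second, the passage from plaque-expansiveness to a well-defined, continuous $f'^*$ with $i'\circ f'^*=f'\circ i'$ is the other substantive step: one must show that the $f'$-image of a point of $i'(\dot A)$ is recognized as the $i'$-image of a point determined up to a small plaque, and the ``moreover'' clause requires actually constructing the neighbourhoods $V_X$ on which $f'^*$ and $f^*$ send plaques into the same leaf, not merely invoking Example \ref{example d'expan}. So the proposal is a correct outline of the right approach, but it does not substitute for the proof in \cite{Bermem}.
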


Let us show how to apply this Theorem to prove the main Theorem. 

Let $(A,\Sigma)$ and $\mathcal T$ be the stratification of laminations and its trellis structure given by Proposition \ref{trellis construit} for the $AS$ compact set $K$ for $f$. Let $(\hat A,\hat \Sigma)$ and $\hat {\mathcal T}$ be those given by Corollary \ref{trellis construit} for $\hat f$ as in the main Theorem. 

Let us apply Theorem \ref{main} to the latter stratified space and trellis, with the dynamics $\hat f$.

%

The preservation of $\hat \Sigma$ is clear. Property $ii$ follows immediately from the second item of Proposition \ref{trellis construit}. The normal expansion of $\hat X_i:= \pi^{-1}(X_i)$ follows from the normal hyperbolicity of $\hat \Lambda_i:= \pi^{-1}(\Lambda_i)$ and from the belonging of the leaves of $X_i$ to stable manifolds of $\Lambda_i$. 

Let us show Property $iii$ for each stratum $\hat X_i\in \hat \Sigma$. 

 Let $(\hat x_n)_n$ and $(\hat y_n)_n$ be two close $\eta$-pseudo-orbits of $\hat f^{-1}(\hat L_i)$ which respect the plaques of $\hat {\mathcal L}_{i}$. Let $(x_n)_n$ and $(y_n)_n$ be the images by $\pi$ of these pseudo-orbits. By example \ref{example d'expan}, for $\eta$ small enough, $x_0$ and $y_0$ belong to a same small plaque of $X_i$.  We can suppose that $\eta$ is small enough so that this plaque is included in a trivialization of $\pi:\; \hat M\rightarrow M$. As $\hat x_0$ and $\hat y_0$ are $\eta$-close and belong to the same preimage of a small plaque of a trivialization, they  belong to a same small plaque of $\hat {\mathcal L}_{i}$. 

Take $\epsilon'<\epsilon$ and put:
\[{{\dot A}}:= \cup_{x\in K} \tilde W^s_{\epsilon'}(x).\]
By Lemma \ref{ouvert}, the subset ${\dot A}$ is open in $A$. By Fact \ref{factpedago}, for $x\in K$, the closure of each $\tilde W^s_{\epsilon'}(x)$ is included in $\tilde W^s_{\epsilon}(x)$ and so the closure of $\dot A$ is included in $A$. Furthermore, as $f$ sends each $\tilde W^s_{\epsilon'}(x)$ into $\tilde W^s_{\lambda \epsilon'}(f(x))$, it sends $cl(\dot A)$ into $\dot A$.

Consequently we can apply Theorem \ref{main} to $(\hat A,\hat \Sigma)$, $\hat{\mathcal T}$ and $\hat {{\dot A}}:=\pi^{-1}(\dot A)$.

 For $\hat f'$ close to $\hat f$, we recall that $i'$ denotes the embedding of {$(\hat {{\dot A}},\hat \Sigma{|\hat{{\dot A}}})$} and $(\hat V_{i}):=(V_{\hat X_i})_i$ the family of neighborhoods given by the above Theorem. 
By shrinking $\hat V_i$ we can suppose that $\hat V_i:=\pi^{-1}(V_i)$ with $V_i:=\pi(\hat V_i)$.
%
%

\section{Proof of main Theorem \ref{Main result}}
\subsection{Tubular neighborhood of $\mathcal L$}
Let $y\in \hat M\mapsto F_y$ be a smooth section of the Grassmannian of $T\hat M$ $C^0$-close to the orthogonal of the tangent space to the fibers of
$\pi$. Hence we can suppose that for $y\in \hat M$:
\[F_y\oplus \ker T_y\pi =T_y \hat M\]

We endow  $F:=\sqcup_{y\in \hat M} F_y$ with the canonical vector subbundle structure of $T\hat M$.
Let $\alpha>0$ be a small enough such that for every $x\in L$,

\[\{(y,u)\in F\; :\; \|u\|< \alpha,\; y\in \mathcal L_x\}\]
is embedded by the exponential map $\exp$ associated to the Riemannian metric of $\hat M$.

 \[\mathrm{Let }\; Exp\; :\; F\rightarrow \hat M\]
\[(y,u)\mapsto \exp_y\left(
\frac{\alpha\cdot u}{\sqrt{1+\|u\|^2}}
\right).\]
The factor $\alpha/\sqrt{1+\|u\|^2}$ makes, in particular, the restriction of $Exp$ to each fiber $ F_y$ a diffeomorphism onto its image denoted by $\mathcal F_y$, for every $y\in \hat M$.

We notice that the restriction of $Exp$ to the zero section of $F$ is equal to the canonical inclusion  of $\mathcal L$ into $\hat M$.

Let $G$ be the set formed by the submanifolds of $\hat M$ $C^1$ diffeomorphic to a fiber of $\pi$.
We endow $G$ with the $C^1$ uniform topology. The submanifolds family $(\mathcal F_{y})_{y\in L}$ is useful for the following: 

\begin{claim} \label{Pour holonomy}There exists an open neighborhood $V_G$ of $\{(x,\mathcal L_x); x \in K\}$ in $M × G$, such that for 
every $(x, N) \in  V_G$, every $y\in \mathcal L_x$, the submanifolds $\mathcal F_y$ and  $N$ have a transverse intersection which consists of a unique point $I(y, N)$.

Moreover the map $I$ is continuous and its differential with respect to the first variable along $\mathcal L_X$ exists, is injective and depends continuously on 
$(x, N)\in V_G$.\end{claim}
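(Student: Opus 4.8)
The plan is to prove Claim \ref{Pour holonomy} by a standard transversality-plus-implicit-function-theorem argument, uniformized over the compact set $K$. First I would fix $x\in K$ and note that the leaf $\mathcal L_x$ is a submanifold transverse, at the zero section, to the fibers $\mathcal F_y$: indeed by construction $F_y\oplus\ker T_y\pi=T_y\hat M$ and $\mathcal L_x$ is tangent to a subspace of $\ker T_y\pi$ (since the leaves of $\mathcal L$ are contained in connected components of fibers of $\pi$, hence $T_y\mathcal L_x\subset\ker T_y\pi$), while $\mathcal F_y=Exp(F_y)$ is tangent to $F_y$ at $y$. Thus $T_y\mathcal F_y\oplus T_y\mathcal L_x$ has the right dimension for a transverse intersection in $\hat M$, and at the point $y$ itself the two manifolds meet transversally with $\mathcal F_y\cap\mathcal L_x\ni y$. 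Since $\hat M$ is compact, $\mathcal L$ is compact, and the $\mathcal F_y$ depend smoothly on $y$ with $\alpha$ chosen small (so $\mathcal F_y$ stays in the embedded tube), this transversality is \emph{uniform}: there is a single $\rho>0$ so that for every $y\in L$ the intersection $\mathcal F_y\cap\mathcal L_x$ restricted to the $\rho$-ball around $y$ is the single transverse point $y$.

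Next I would promote this to the perturbed fiber $N$. The set $G$ of submanifolds $C^1$-diffeomorphic to a fiber of $\pi$ carries the $C^1$ uniform topology, and by compactness of $K$ the uniform transversality above is an open condition: there is an open neighborhood $V_G$ of $\{(x,\mathcal L_x):x\in K\}$ in $M\times G$ such that for $(x,N)\in V_G$ and $y\in\mathcal L_x$, the manifold $N$ is $C^1$-close to $\mathcal L_x$, hence still transverse to $\mathcal F_y$ near $y$, and (shrinking $V_G$ once more, using that $N$ is uniformly close to a fiber and $\mathcal F_y$ is a graph over $F_y$ of bounded geometry) the intersection $\mathcal F_y\cap N$ consists of exactly one point $I(y,N)$. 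The existence and uniqueness here is where the main work sits: I would express $N$ locally as a section of the normal bundle to $\mathcal L_x$ (possible since $N$ is $C^1$-close to $\mathcal L_x$), and $\mathcal F_y$ as the fiber $Exp(F_y)$, so that the equation "$p\in\mathcal F_y\cap N$" becomes, in the chart given by $Exp$, a fixed-point equation to which the uniform transversality gives a contraction; uniqueness on the \emph{whole} of $\mathcal F_y$ (not just near $y$) comes from the fact that $\mathcal F_y$ has diameter $O(\alpha)$ and the relevant Lipschitz constants are controlled, so no second intersection point can escape the ball where the implicit function theorem applies.

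Finally, for the regularity of $I$: continuity of $(y,N)\mapsto I(y,N)$ is immediate from the implicit function theorem with parameters (the defining equation depends continuously on $y$ and, in the $C^1$ topology, on $N$). For the differential in the first variable \emph{along} $\mathcal L_X$, I would differentiate the defining equation $Exp^{-1}(I(y,N))\in N$ with respect to $y\in\mathcal L_x$; the partial derivative along the leaf of the map $y\mapsto\mathcal F_y$ exists and is continuous because $Exp$ is smooth and the line/plane field $F$ is a smooth subbundle, and the implicit function theorem then yields that $D_1 I$ exists, is injective (because $T\mathcal L_x$ maps isomorphically onto a complement of $T_{I(y,N)}\mathcal F_{y}$ inside $T N$, again by the uniform transversality), and varies continuously with $(x,N)\in V_G$. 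The one subtlety I expect to be the real obstacle is making the uniqueness global along each $\mathcal F_y$ rather than merely local: this forces one to keep $\alpha$ and the size of $V_G$ coupled, choosing $\alpha$ small first (so each $\mathcal F_y$ is a short embedded disk transverse to $\mathcal L_x$ with uniform angle) and only then shrinking $V_G$, and to invoke compactness of $K$ (equivalently of $L=\pi^{-1}(K)$) so that all these smallness choices can be made simultaneously.
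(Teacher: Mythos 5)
Your argument is correct and is essentially the paper's own: what you derive by hand via uniform transversality and the implicit function theorem, the paper packages as the observation that $(\mathcal F_y)_{y\in\mathcal L_x}$ is a $C^1$ foliation of a neighborhood of the compact leaf $\mathcal L_x$ and that $I(\cdot,N)$ is its holonomy onto the transverse section $N$, followed by the same compactness/covering argument over $K$ to produce $V_G$. No gap.
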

\begin{proof} For every $x\in K$, every $N\in G$ at a neighborhood of $\mathcal L_x$ is a transverse section of the $C^1$-foliation formed by the leaves $(\mathcal F_y)_{y\in \mathcal L_x}$. The holonomy from the section $\mathcal L_x$ to the section $N$ defines a $C^1$ maps $I(\cdot, N)|\mathcal L_x:\; y\in \mathcal L_x \mapsto I(y,N)$. 

For $x'$ close to $x$, the manifold $\mathcal L_{x'}$ is $C^1$-close to $\mathcal L_x$  and the foliation $(\mathcal F_y)_{y\in \mathcal L_{x'}}$ is $C^1$ close to $(\mathcal F_y)_{y\in \mathcal L_x}$. Thus the map  $I(\cdot, N)|\mathcal L_x$ depends continuously on $x'$ nearby $x$ and $N\in G$ nearby $\mathcal L_x$ for the $C^1$-topology.    

We cover the set $\{(x,\mathcal L):\; x\in K\}$ by such neighborhoods in $K\times G$; the union of this covering is the domain $V_G$ of the requested  $I$.
\end{proof}

For $x\in K$ the subset of manifold $N$ such that $(x,N)$ is in $G$ is called \emph{the fiber} of $x$ in $G$.  

We now proceed to the proof of Theorem \ref{Main result} by distinguish two cases: Whether or not $(\Lambda_i)$ contains \emph{one dimensional pieces}, that is pieces whose stable dimension is one.
 
Along the proof $\hat f'$ is supposed closer and closer to $\hat f$, and this even after that the numbers  $\eta$, $\delta$, $\theta$, $\sigma$ will be implemented.
 
\subsection{First step for the case without one dimensional pieces}

Let us suppose there is none one dimensional pieces in $(\Lambda_i)_i$. We suppose that $K$ contains sources $(\Lambda_i)_{i> k}$ and sinks $(\Lambda_i)_{i\le k}$ otherwise $K=\sqcup_i \Lambda_i$ and Theorem \ref{Main result} is an immediate consequence of Theorem \ref{HPS}. We recall the existence of a filtration $(K_i)_i$ of $K$ adapted to $(\Lambda_i)_i$.

We notice that the leaves of the laminations $(\hat {\mathcal L_i})_{i>k}$ given by Corollary \ref{trellis construit en haut} 
constructed in section \ref{construction de treillis} have the same dimension as $\mathcal L$. 

Theorem \ref{main} provides  a family of neighborhoods $(\hat V_i)_i$ of the strata of $\hat \Sigma|\dot A$, and for $\hat f'$ close to $f$,  an embedding $i'$ of $\hat \Sigma|\hat{\dot A}$ preserved by $\hat f'$. 
 
For $M$ large enough, the compact set   
$\hat W:=\pi^{-1}(W)$ is included in the neighborhood $\cup_{i>k} \hat V_i$ of $(\hat \Lambda_i)_{i> k}$, with  $W=cl(K\setminus f^{-M}(K_k))$.
Also we note that $W$ is included in the basin of repulsion of $(\Lambda_i)_{i\le  k}$.

We denote by $h:\; y\in \hat W\mapsto I(y, i'(\mathcal L_x))$, with $x:= \pi(x)$.

\begin{Property}\label{premiere abcd}
\begin{itemize}
\item[(a)] $h$ is an embedding from the lamination $\mathcal L{|\hat W}$ into $M$, which is $C^1$ close to the canonical inclusion,
\item[(b)] For every $x\in f^{-1}(W)$,  $\hat f'$ sends $h(\mathcal L_x)$ onto $h(\mathcal L_{f(x)})$:
\[ \hat f'(\mathcal L'_x)=\mathcal L'_{ f(x)}\quad \mathrm{with}\quad \mathcal L'_x:= h(\mathcal L_x).\]
\item[(c)] For every $y\in \hat W$,  $h(y)$ belongs to $\mathcal F_y$.
\item[(d)] For every $n\ge 0$, every $x,x'\in W$ such that $\hat f'^n (\mathcal L_x')$ intersects $\mathcal L_{x'}'$ satisfy that 
$x'= f^n(x)$.
\end{itemize}
\end{Property}
\begin{proof} Property {\it (c)} is obvious. Property {\it(a) } follows from Claim \ref{Pour holonomy}. Property \textit{(b)} holds since $\hat W$ is included in the union $\cup_{i>k} \hat V_i$ of neighborhoods adapted to $\hat f'$.

Property {\it (d)} for $n\le 1$ follows from the fact that $W\subset \cup_{i>k} \hat V_i$ and that the trellis structure is embedded. For $n\ge 2$, we notice that $f^2(W)\setminus f(W)$ is a compact set disjoint from $W$ included in the attraction basin of $(\Lambda_i)_{i\le k}$. Thus the image $\hat f'^n( \mathcal L_x)$ is far from $\mathcal L'_{x'}$ for every  $n\ge 2$ and $x,x'\in W$.
\end{proof}

We extend $h$ to $K$ in section \ref{second step}.

\subsection{First step for the case with one dimensional pieces}
Let us now suppose there are one dimensional pieces in $(\Lambda_i)_i$. 

We suppose the basic pieces $(\Lambda_i)_{i=0}^N$ indexed such that 
 attracting cycle have indexes at most $k$, the one dimensional pieces have indexes in $[k+1, l]$, the repulsive cycle have indexes greater than $l$, where  $-1\le k< l\le N$.

We recall that the tubular neighborhoods $(\hat L_i, \hat {\mathcal L_i})_{i=k+1}^l$ given by Corollary \ref{trellis construit en haut} are compatible and so form a lamination $\mathcal L^s$ on the union $L^s=\cup_{i=k+1}^l \hat L_i$.

For every $\hat f'$ nearby $\hat f$, let $i^s$ be the restriction to $L^s$ of the embedding $i'$ given by Theorem \ref{main} applied to $\hat f$ and the structures given by Corollary \ref{trellis construit en haut}.
 We remark that $i^s$ is an embedding of $\mathcal L^s$ close to the canonical inclusion.
 We denote by $V^s\subset L^s$ the union of the adapted neighborhoods $\cup_{i=k+1}^l \hat V_i$. We remark that $V^s$ is a neighborhood of $\pi^{-1}(\cup_{i=k+1}^l W^s_i)$. 
 
 Furthermore every $y\in V^s$ has its image $i^s(y)$ sent by $\hat f'$ into the image by $i^s$ of a small $\mathcal L^s$-plaque of $\hat f(y)$.
 
 We do exactly the same construction for the inverse dynamics. We denote by $(\mathcal L^u, L^u)$, $i^u$ and $V^u$ the lamination, embedding and neighborhood obtained for $\hat f'^{-1}$ close to $\hat f^{-1}$.
 
\begin{Property} We can fix  $M$ large enough such that $V^s\cap \hat f^{-1}(V^u)$ is a neighborhood (in $L$) of $\hat W$ with:
\[W:= cl\big(f^M(K_l)\setminus f^{-M}(K_k)\big),\quad \mathrm{and}\quad \hat W:=\pi^{-1}(W)\]
\end{Property}
\begin{proof} By Property \ref{propfiltration}, we have:
\[\bigcap_{M>0} cl(f^M(K_l)\setminus f^{-M}(K_k))=\bigcap_{M>0} f^M(K_l)\cap \bigcap_{M>0} f^{-M} (K_k^c)\]
\[=\bigcup_{i\le l} W^u_i\cap \bigcup_{j> k}W^s_j=\bigcup_{k< i\le l} W^u_i\cap \bigcup_{k< j\le l}
W^s_j.\]
As $V^s$ and $V^u$ are neighborhoods of the left side and righ side of the latter intersection, it is a neighborhood of $\bigcap_{M>0} cl(F^M(K_l)\setminus f^{-M}(K_k))$ and so we can conclude by compactness.\end{proof}

For $\eta>0$ and $x\in W\cup f(W) \cup f^{-1}(W)$, we denote by $\mathcal L^s_\eta(x)$ the union of $\mathcal L^s$-plaques of diameter less than $\epsilon$ which intersect $\mathcal L_x$. Similarly, we define $\mathcal L^u_\eta(x)$

We fix $\eta<\delta$ small enough so that the following condition holds for $\hat f'$ close enough to $\hat f$ and $x\in W\cup f(W) \cup f^{-1}(W)$:

\begin{itemize}
  \item[\it (i)] the transverse intersection $\mathcal L'_x:= i^s(\mathcal L^s_\eta(x))\pitchfork i^u(\mathcal L^s_\eta(x))$ belongs to the fiber of $x$ in $V_G$,   
 \item[\it (ii)] the transverse intersection $\mathcal L'_x:= i^s(\mathcal L^s_\delta(x))\pitchfork i^u(\mathcal L^s_\delta(x))$ belongs also to the fiber of $x$ in $V_G$,
 \item[\it (iii)] $\hat f'$ sends $i^s(\mathcal L^s_\eta(x))$ into $i^s(\mathcal L^s_\delta(f(x)))$,
 \item[\it (iv)] $\hat f'$ sends $i^u(\mathcal L^u_\eta(x))$ into $i^u(\mathcal L^u_\delta(f(x)))$.
 \end{itemize} 
 
We put:
\[h:= y\in \hat W\mapsto I\Big(y, i^s\big(\mathcal L^s_\eta(x)\big)\pitchfork i^u\big(\mathcal L^u_\eta(x)\big)\Big),\quad \mathrm{ with}\; x:=\pi(y).\]

\begin{Property}
\begin{itemize}
\item[(a)] The map $h$ is an embedding from the lamination $\mathcal L{|\hat W}$ into $M$, which is $C^1$ close to the canonical inclusion.
\item[(b)] For every $x\in f^{-1}(W)\cap W$,  $\hat f'$ sends $h(\mathcal L_x)$ onto $h(\mathcal L_{f(x)})$:
\[\mathcal L'_x= h(\mathcal L_x)\quad \mathrm{and}\quad h(\mathcal L_{ f(x)})= \hat f'\circ h(\mathcal L_x).\]
\item[(c)] For every $y\in \hat W$,  $h(y)$ belongs to $\mathcal F_y$.
\item[(d)] For every $x,x'\in W$, $n\in \mathbb Z$, if $\hat f'^n(\mathcal L_{x}')$ intersects $\mathcal L_{x'}'$ then $x'= f^n(x)$.
\end{itemize}
\end{Property}
\begin{proof}
Property \textit{(c)} is obvious. Property {\it (d)} is proved as in Property \ref{premiere abcd}.

Let us prove Property \textit{(a)}. By Claim \ref{Pour holonomy}, the map  $h$ is an immersion of the lamination $\mathcal L{|W}$ into $\hat M$ close to the canonical inclusion, for $\hat f'$ close to $\hat f$. 
Let us show that $h$ is injective. As $h$ is close to the canonical inclusion, if two different points of $W$ have the same image by  $h$, they belong to different manifolds $\mathcal L_x$ and $\mathcal L_{x'}$.  However  $i^s$ and $i^u$ are embeddings of laminations, and so the manifolds $\mathcal L_x$ and $\mathcal L_{x'}$ belong to the same intersection of ${\mathcal L}^u_\delta(x)$ with ${\mathcal L}^s_{\delta}(x)$, equal to the single manifold $\mathcal L_x$. This is a contradiction. 

By compactness of $W$, the map $h$ is an embedding.

Let us prove Property \textit{(b)}. The continuous map $h$ sends $\mathcal L_x$ onto a compact subset of $\mathcal L_x'$. As $h$ is an immersion, $h(\mathcal L_x)$ is also an open subset of $\mathcal L_x'$. Since $h$ is close to the canonical inclusion, by connectedness, $\mathcal L'_x=h(\mathcal L_x)$.
 
By {\it (i)- (iv)}, $f'$ sends each $\mathcal L'_x$ into $\mathcal L'_{f(x)}$ for $x\in W\cap f^{-1}(W)$. For the same reason the first equality holds.

\end{proof}

\begin{rema}\label{therema} By {\it (c)}, for every $y\in \hat W\cap \hat f(\hat W)$, $\hat h(y)=I(y,h(\mathcal L_x))$, with $x=\pi(y)$, since $h(\mathcal L_x)$ is close to $\mathcal L_x$ by {\it (a)}. By {\it (b)}, $h(\mathcal L_x)$ equals $\hat f'\circ h(\mathcal L_{f^{-1}(x)})$. Consequently $h(y)=I\big(y,\hat f'\circ h(\mathcal L_{f^{-1}(x)})\big)$. 
\end{rema}

\subsection{Second step of the construction}\label{second step}
Let us now extend $h$ to $f^{-M}(K_k)\cup W$, such that Properties {\it (a)-(b)-(c)-(d)} are still satisfied. 

We recall that $(\Lambda_i)_{i\le k}$ are attracting cycles and so their preimages $(\hat \Lambda_i)_{i\le k}$ by $\pi$ are normally contracting submanifolds.
By  Theorem \ref{HPS}, the submanifolds  $(\hat \Lambda_i)_{i\le k}$ persist to submanifolds $(\hat \Lambda_i')_{i\le k}$ preserved by
$\hat f'$ close to $\hat f$.

By normal contraction of $\hat \Lambda_i$, 
for every $\theta>0$ small enough and then $\sigma>0$ small, the cone field $\chi$ on $\hat M$ whose cone at $y\in \hat M$  is $\chi(y):= \{v\in T_yM\setminus\{0\}: \angle(v, Ker\, T_y\pi)<\theta\}$ satisfies the following properties for $\hat f'$ close enough to $\hat f$:

\begin{enumerate}
\item The $\sigma$-neighborhood $\hat V$ of $\sqcup_{i\le k} \hat \Lambda_i$ is sent into itself by $\hat f'$. 
\item  For every $y\in \hat V$:
\[cl\Big(T\hat f'\big(\chi(y)\big)\Big)\subset \chi\big(\hat f'(y)\big)\cup \{0\}.\]
\item The intersection between $\chi(y)$ and the tangent space of any manifold $\mathcal F_z=Exp(F_z)$ is empty, for every $y\in \hat V$.
\item For every $x\in K$, every submanifold $N$ belongs to the $G$ fiber of $x$, if the tangent space of $N$ is in $cl(\chi)$ and if the manifolds $N$ and $\mathcal L_x$ are in the same component of $\hat V$. 
\end{enumerate}

By Property \ref{propfiltration}, there exists  $M'>0$ such that $\pi^{-1}(f^{M'-1}(K_{k}))$ is included in $\hat V$.

We suppose that $\hat f'$ is sufficiently close to $\hat f$ in order that $(f^n(x),\hat f'^n(\mathcal L'_x))$ belongs to $V_G$ for every $x\in W$ and $n\le  M+M'$.

By remark \ref{therema}, for $\hat f'$ close enough to $\hat f$, we can extend $h$ to $\bigcup_{n=0}^{M+M'} \hat f^n(\hat W)$ by:
\[y\in \bigcup_{n=0}^{ M+M'} \hat f^n(\hat W)\mapsto I\big(y, \hat f'^{n}(\mathcal L'_{f^{-n}(x)})\big),\quad \mathrm{if}\quad y\in \hat f^n(\hat W).\]

Such an extension is still denoted by $h$. By remark \ref{therema}, $h$ is smooth. By {\it (d)}, $h$ is an embedding.

We remark that every $\hat f'$ close enough to $\hat f$ satisfies for every $x\in f^{M'-1}(K_k\setminus f^{-1}(K_k))$ that 
$\mathcal L'_x$ is included in $\hat V$ and its tangent space is in $cl(\chi)$.
By $\hat f'$-stability of these sets, for every $n\ge0$, $\hat f'^n (\mathcal L'_x)$ is included in $\hat V$ and its tangent space is in $cl(\chi)$. Consequently $\big(f^n(x),\hat f'^n (\mathcal L'_x)\big)$ belongs to $V_G$. 

Thus we can define the extension on $\hat U:= \cup_{n\ge 0} \hat f^n(\hat W)\cup \bigsqcup_i \hat\Lambda_i$:
\[y\in \hat U\mapsto \left\{\begin{array}{cl}
I(y,\hat f'^n (\mathcal L'_x))& \mathrm{if}\; x:=f^{-n}\circ \pi(y)\in W,\\
I(y,\hat \Lambda_i')& \mathrm{if}\; y\in \hat \Lambda_i'\; \mathrm{and} \; i\le k .\end{array}
\right.\] 
that we denote still by $h$.

 We still denote by $\mathcal L'_x$ the manifold $h(\mathcal L_X)$ for every $x\in U:= \pi(\hat U)$. We remark that $h$ satisfies :
\begin{itemize}
\item[\it (b)] For every $x\in U$,  $\hat f'$ sends $h(\mathcal L_x)$ onto $h(\mathcal L_{f(x)})$:
\[ \hat f'(\mathcal L'_x)=\mathcal L'_{ f(x)}\quad \mathrm{with}\quad \mathcal L'_x:= h(\mathcal L_x).\]
\item[\it (c)] For every $y\in \hat U$,  $h(y)$ belongs to $\mathcal F_y$.
\item[\it (d)]  For every $x,x'\in U$, $n\in \mathbb Z$, if $\hat f'^n(\mathcal L_{x}')$ intersects $\mathcal L_{x'}'$ then $x'= f^n(x)$.
\end{itemize}

\paragraph{Proof that $h$ is an immersion}

It remains only to prove the continuity of $h$ and the continuity of its differential with respect to  $T\mathcal L$. As $ f$ sends $K_k$ into its interior and since $\cap_{n\ge 0} f^n(K_k)$ is equal to $\sqcup_{i\le k} \Lambda_i$, we only need to show this on  $\sqcup_{i\le k} \hat \Lambda_i$.

Let us begin by proving of the continuity of $h$. 
Let $(y_n)_n\in K_k^\mathbb N$ be converging to $y\in \sqcup_{i\le  k} \hat \Lambda_i$.
Let $x_n:= \pi(y_n)$ and $x:=\pi(y)$.

By Arzelà-Ascoli Theorem, there exists an accumulation point  $\mathcal L''$ of  $(\mathcal L'_{x_n})_n$ which is a Lipschitz submanifold close to $\mathcal L_x$.  

By uniqueness of the normally hyperbolic manifolds (See \cite{HPS}), the manifolds $\mathcal L_x$ and $\mathcal L''$ are equal. Thus any accumulation point of $(h(y_n))_n=\big(I(y_n,  \mathcal L'_{x_n})\big)_n$ converges to $h(y)=I(y,\mathcal L'_x)$. This shows the continuity of $h$.

Let us show by the sake of contradiction  the continuity of the  derivative of $h$ with respect to $T\mathcal L$ at $\sqcup_{i\le k} \hat \Lambda_i$.
Let $(y_n)_n\in \hat U^\mathbb N$ which converges to $y\in \sqcup_{i\le k} \hat \Lambda_i$, such that $(T h(y_n))_n$ does not converge to $Th(y)$. As  $Th(T_{y_n}\mathcal L)$ is included in $cl(\chi)$ for every  $n\ge 0$,
we can suppose that $(Th_i(T_{y_n}\mathcal L))_n$ converges to a  $d$-plane $P'\subset cl(\chi)$ different of $P:=Th(T_y\mathcal L)\subset cl( \chi)$. By Property {\it (b)}, $T\hat f'^{-k} (P)$ and $T\hat f'^{-k} (P')$ are included in the closure of  $\chi(\hat f'^{-k}\circ h(y))$ for every  $k$ large. By projective hyperbolicity of the cone field $\chi$, we get a contradiction.

\paragraph{Proof that $h$ is an embedding}  
It is sufficient to show that $h$ is injective since $\hat U$ is compact. Also by Property $(c)$, it is sufficient to prove that two different leaves of $\mathcal L{|\hat f^M(\hat K_l)}$ have disjoint images by  $h$.

 The injectivity of $h$ on $\cup_{n= 0}^m \hat f^n(\hat W)$ follows from an easy induction on $m$ by using {\it (d)}. This implies the injectivity of $h$ on $\cup_{n\ge 0} \hat f^n(\hat W)$.
 
 The injectivity of $h$ on $\sqcup_{i\le k} \hat \Lambda_i$ is obvious.

Let $y\in \sqcup_{i\le k} \hat \Lambda_i$ and $y'\in \hat U$ be sent to the same image by $h$. Let $x:= \pi(y)$ and $x':= \pi(y')$. 

The point $y'$ cannot belong to a certain $\hat f^n(\hat W)$ since this would imply that $\mathcal L_{f^{-n} (x')}=\hat f'^{-n}(\mathcal L_x) $ intersects $\mathcal L_{f^{-n} (x)}=\hat f'^{-n}(\mathcal L_{x'})$, which is impossible since $\hat W$ and $\sqcup_{i\le k} \hat \Lambda_i$ are disjoint compact set.

Thus the point $y'$ belongs to the manifold $\sqcup_{i\le k} \hat \Lambda_i$, and so is equal to $y$. This proves the injectivity of $h$.

\paragraph{Conclusion}
In the case where $(\Lambda_i)_i$ does not contain one dimensional pieces, then we are done since $L=\hat U$.

Otherwise we proceed following the same way, but by inversing the dynamics, to extend $h$ to $K$.

\bibliographystyle{alpha}
\bibliography{references}

\def\polhk#1{\setbox0=\hbox{#1}{\ooalign{\hidewidth
  \lower1.5ex\hbox{`}\hidewidth\crcr\unhbox0}}} \def\cprime{$'$}
\begin{thebibliography}{HPPS70}

\bibitem[Ber07]{Bermem}
P.~Berger.
\newblock Persistence of stratification of normally expanded laminations.
\newblock {\em arXiv:math.DS, to appear in Mémoires de la Société Mathématique
  de France}, 2007.

\bibitem[Ber08]{berlam}
P.~Berger.
\newblock Persistence of laminations.
\newblock {\em arXiv:math.DS, to appear in Boletim da Sociedade Brasileira de
  Matematica}, 2008.

\bibitem[dM73]{dM}
W.~de~Melo.
\newblock Structural stability of diffeomorphisms on two-manifolds.
\newblock {\em Invent. Math.}, 21:233--246, 1973.

\bibitem[Ghy99]{ghys}
{\'E}.~Ghys.
\newblock Laminations par surfaces de {R}iemann.
\newblock In {\em Dynamique et g\'eom\'etrie complexes (Lyon, 1997)}, volume~8
  of {\em Panor. Synth\`eses}, pages ix, xi, 49--95. Soc. Math. France, Paris,
  1999.

\bibitem[HPPS70]{HPSP}
M.~Hirsch, J.~Palis, C.~Pugh, and M.~Shub.
\newblock Neighborhoods of hyperbolic sets.
\newblock {\em Invent. Math.}, 9:121--134, 1969/1970.

\bibitem[HPS77]{HPS}
M.~W. Hirsch, C.~C. Pugh, and M.~Shub.
\newblock {\em Invariant manifolds}.
\newblock Springer-Verlag, Berlin, 1977.
\newblock Lecture Notes in Mathematics, Vol. 583.

\bibitem[Ma{\~n}88]{Mane}
R.~Ma{\~n}{\'e}.
\newblock A proof of the {$C\sp 1$} stability conjecture.
\newblock {\em Inst. Hautes \'Etudes Sci. Publ. Math.}, (66):161--210, 1988.

\bibitem[Mat73]{Ma}
J.~N. Mather.
\newblock Stratifications and mappings.
\newblock In {\em Dynamical systems (Proc. Sympos., Univ. Bahia, Salvador,
  1971)}, pages 195--232. Academic Press, New York, 1973.

\bibitem[PS70]{PS}
J.~Palis and S.~Smale.
\newblock Structural stability theorems.
\newblock In {\em Global Analysis (Proc. Sympos. Pure Math., Vol. XIV,
  Berkeley, Calif., 1968)}, pages 223--231. Amer. Math. Soc., Providence, R.I.,
  1970.

\bibitem[Rob71]{Ri}
J.~W. Robbin.
\newblock A structural stability theorem.
\newblock {\em Ann. of Math. (2)}, 94:447--493, 1971.

\bibitem[Rob75]{Robgeo}
Clark Robinson.
\newblock The geometry of the structural stability proof using unstable disks.
\newblock {\em Bol. Soc. Brasil. Mat.}, 6(2):129--144, 1975.

\bibitem[Rob76]{Rs}
C.~Robinson.
\newblock Structural stability of {$C\sp{1}$} diffeomorphisms.
\newblock {\em J. Differential Equations}, 22(1):28--73, 1976.

\bibitem[Shu78]{Sh}
M.~Shub.
\newblock {\em Stabilit\'e globale des syst\`emes dynamiques}, volume~56 of
  {\em Ast\'erisque}.
\newblock Soci\'et\'e Math\'ematique de France, Paris, 1978.
\newblock With an English preface and summary.

\bibitem[Sma67]{Sm}
S.~Smale.
\newblock Differentiable dynamical systems.
\newblock {\em Bull. Amer. Math. Soc.}, 73:747--817, 1967.

\end{thebibliography}

\end{document}